\newcommand{\circled}[1]{\textcircled{\scriptsize #1}}
\newcommand{\rot}{\operatorname{rot}}
\mathchardef\mhyphen="2D
\newcommand{\std}{{\operatorname{std}}}
\newcommand{\Sat}{\operatorname{Sat}}
\newcommand{\Fat}{\operatorname{Fat}}
\newcommand{\im}{{\operatorname{Image}}}
\newcommand{\Id}{{\operatorname{Id}}}
\newcommand{\Rot}{\operatorname{Rot}}
\newcommand{\tb}{\operatorname{tb}}
\newcommand{\R}{{\mathbb{R}}}
\newcommand{\Z}{{\mathbb{Z}}}
\newcommand{\N}{{\mathbb{N}}}
\newcommand{\NS}{{\mathbb{S}}}
\newcommand{\D}{{\mathbb{D}}}
\newcommand{\Leg}{{\mathfrak{Leg}}}
\newcommand{\FatLeg}{{\mathfrak{FatLeg}}}
\newcommand{\kalman}{{K\'{a}lm\'{a}n }}
\newcommand{\kalmans}{{K\'{a}lm\'{a}n's }}
\newcommand{\LL}{{\mathcal{L}}}
\newcounter{theo}[section]\setcounter{theo}{0}
\newtheorem{proposition}{Proposition}
\newtheorem{theorem}[proposition]{Theorem}
\newtheorem{definition}[proposition]{Definition}
\newtheorem{lemma}[proposition]{Lemma}
\newtheorem{corollary}[proposition]{Corollary}
\newtheorem{remark}[proposition]{Remark}
\newtheorem{example}[proposition]{Example}
\newcommand{\superimpose}[2]{%
  {\ooalign{$#1\@firstoftwo#2$\cr\hfil$#1\@secondoftwo#2$\hfil\cr}}}
\begin{document} 

\title{Parametric satellites and connected-sums in the space of Legendrian embeddings}

\subjclass[2020]{Primary: 53D10.}
\date{\today}

\keywords{}

\author{Eduardo Fern\'{a}ndez}
\address{Department of Mathematics, University of Georgia, Athens, GA, USA}
\email{eduardofernandez@uga.edu}

\author{Javier Mart\'{i}nez-Aguinaga}
\address{Universidad Complutense de Madrid, Departamento de Algebra, Geometría y Topología, Facultad de
Matemáticas, and Instituto de Ciencias Matemáticas CSIC-UAM-UC3M-UCM, C. Nicolás Cabrera, 13-15,
28049 Madrid, Spain}
\email{frmart02@ucm.es}

\author{Francisco Presas}
\address{Instituto de Ciencias Matem\'{a}ticas CSIC-UAM-UC3M-UCM, C. Nicol\'{a}s Cabrera, 13-15, 28049 Madrid, Spain.}
\email{fpresas@icmat.es}

\begin{abstract}

This article introduces two new constructions at the higher homotopy level in the space of Legendrian embeddings in $(\R^3,\xi_{\std})$. We first introduce the parametric Legendrian satellite construction, showing that the satellite operation works for parametric families of Legendrian embeddings. This yields new invariants at the higher-order homotopy level.

We then introduce the parametric connected-sum construction. This operation takes as inputs two $n$-spheres based at Legendrian embeddings $K_1$ and $K_2$, respectively, and produces a new $n$-sphere based at $K_1\# K_2$. As a main application we construct new infinite families of loops of Legendrian embeddings with non-trivial LCH monodromy invariant.   \end{abstract}

\maketitle
\setcounter{tocdepth}{1} 
\tableofcontents

\section{Introduction}
%\textcolor{red}{Yo pondr\'ia la proposici\'on donde se define la suma conexa en la intro, tambi\'en el teorema principal y los ejemplos del final. IDEAS: This can be thought as a relative version of parametric connected sums of contact structures used in Fernandez-Muñoz to build loops of contact structures blablabla... The operation could be generalized to arbitrary $3$-folds... The parametric connected sum could be useful to build interesting examples of Legendrian loops in the future... We observe that it is possible to realize satellite operations with parameters, something interesting by itself since it gives rise to new invariants of spheres of Legendrians: the satellite sphere...}

\subsection{Context and motivation}. The study of higher homotopy groups of the space of Legendrian embeddings in dimension $3$ has garnered much attention in recent years. 
This article introduces new constructions and invariants at the higher homotopy level for such spaces.
%; namely, the parametric satellite construction and the parametric connected-sum construction.
% This article introduces two constructions, at the higher homotopy level, in the space of Legendrian embeddings: the parametric satellite construction and parametric connected-sums.

The study of Legendrian embeddings has been a central topic in Contact Topology since the work of D. Bennequin \cite{Ben}, where different contact structures on $\R^3$ were distinguished for the first time by means of these objects. Since then, spaces of Legendrian knots have been widely studied at the $\pi_0$-level; see, for instance, \cite{Chekanov, EliashbergFraser, Etnyre, EtnyreHondaTorus, EtnyreHonda, FT, Ng, ost}.

There is, nonetheless, a big lack of understanding of these spaces at the level of higher homotopy groups. T. Kálman laid the groundwork in the study of fundamental groups of such spaces: he constructed the first examples of loops of Legendrian embeddings in $\R^3$ that were trivial as loops of smooth embeddings but non-trivial within the space of Legendrian embeddings \cite{kalman}. He defined a monodromy invariant; i.e. an automorphism of the Legendrian contact homology (see \cite{Chekanov,EliashbergContactInvariants}), which he used to show the non-triviality of such loops. See also \cite{kalman2}.

Motivated by T. Kalman's pioneering work, we continued with the study of higher homotopy elements of the space of Legendrian embeddings in dimension $3$ and started a systematic study of global homotopical properties of Legendrian embedding spaces in contact $3$-manifolds (see \cite{FMP, FMP2, FMP3}).

In \cite{FMP} we proved the non-triviality of the aforementioned loops from \cite{kalman} at the formal level by computing the fundamental group of the space of formal Legendrian embeddings. Furthermore, we constructed infinitely many new examples, by using different techniques from the ones in \cite{kalman, kalman2}, of non-trivial  loops of Legendrians which are trivial as loops of smooth embeddings (see \cite{FMP2}). We also showed that there is a homotopy injection of the contactomorphism group of $\NS^3$ into infinitely many connected components of the space of Legendrians.

In \cite{FMP3} we determined, for the first time, the whole homotopy type of certain connected components of the space of Legendrian embeddings in any given $3$-dimensional tight contact manifold. % for long Legendrian unknots $\gamma$ satisfying the condition $|\rot(\gamma)|+\tb(\gamma)=-1$, where $\rot(\gamma)$ and $\tb(\gamma)$ denote the classical rotation and Thurston-Bennequin invariants, respectively. 
In \cite{FMin}, the first author and H. Min established the first structural results for Legendrian embedding spaces under the cabling operation, a specific instance of the satellite operation studied in this article. Nonetheless, there is still a big lack of understanding of the homotopy type of the whole space of Legendrian embeddings.

This article introduces new tools, which we believe will be fundamental in the study of these spaces at the $\pi_k$-level ($k\geq 1)$, in order to further develop the study of global homotopical properties of these spaces. As a matter of fact, similar constructions and ideas in the smooth category have turned out to be fundamental in establishing classification results for smooth embedding spaces (see \cite{Budney, Budney2, Bud2}). We expect that these constructions will play an analogous role in the contact setting.

As a main application, we construct new infinite families of loops of Legendrian embeddings with non-trivial LCH monodromy invariant. These ideas will play a role as well in upcoming work of the authors.

\subsection{Parametric connected sums and new infinite families of loops.}

Let $(\R^3,\xi_\std=\ker(dz-ydx))$ be the standard contact structure on $\R^3(x,y,z)$. 
Legendrian knots in $(\R^3, \xi_\std)$ are embedded $1$-dimensional submanifolds $L\subseteq \R^3$  everywhere tangent to $\xi_\std$. Similarly, Legendrian embeddings are embeddings parametrizing Legendrian submanifolds. Throughout this article, we will denote by $\widehat{\Leg}$ the space of embedded Legendrians in the standard $(\R^3,\xi_\std)$ and by $\Leg$ the space of Legendrian embeddings. 

One of the main topological constructions of this article (Subsections \ref{ParametricConnectedSums} and \ref{ElefanteMosca}) is the parametric connected-sum operation (Theorem \ref{prop:ConnectedSumMap}) for $k$-spheres of Legendrian embeddings in $(\R^3,\xi_{\std})$. This operation, which can be iterated, takes as inputs two $k$-spheres based at Legendrian embeddings $K_1$ and $K_2$, respectively, and produces a new $k$-sphere based at $K_1\# K_2$.  By iterating it, we can produce, out of any given finite number of embeddings and $k$-spheres based on them, new $k$-spheres of Legendrians based on the connected sum of the given embeddings. 

Subsection \ref{ElefanteMosca} provides a diagrammatic description of the parametric connected-sum. This construction can be generalised to other $3$-manifolds, although its implementation is rather technical (see the PhD thesis \cite{MA} of the second author for its implementation in $(\mathbb{S}^3,\xi_{std})$).

The idea behind this parametric connected-sum construction was inspired to the authors by an argument of J. H. Conway in a totally unrelated context. He used some knotted torus in order to prove a fundamental theorem in Knot Theory  (see \cite[Fig. 31, p. 31]{Gardner}). In the words of M. Gardner, this argument was explained to him ``in a letter'' \cite[p. 60]{Gardner}.

  Let us briefly illustrate J. H. Conway's idea. Take two knots $K_1$ and $K_2$ and realise their connected sum $K_1\# K_2$ as follows: inflate $K_2$ so that it becomes a knotted torus in $\R^3$ and take a knot all along its surface except for some small region where the knot gets inside the knotted torus and realizes the $K_1$ component (dotted trefoil Figure \ref{ConwayIdea}). Note that one can then apply an isotopy to the knotted torus while moving the $K_1$-component rigidly throughout the isotopy (see \cite{Gardner} for further details). We later on realised that similar arguments have been considered in a more general framework within the context of smooth embedding spaces in the work of R. Budney \cite{Bud2, Budney2, Budney}.

\begin{figure}[h]
	\centering
	\includegraphics[width=0.36\textwidth]{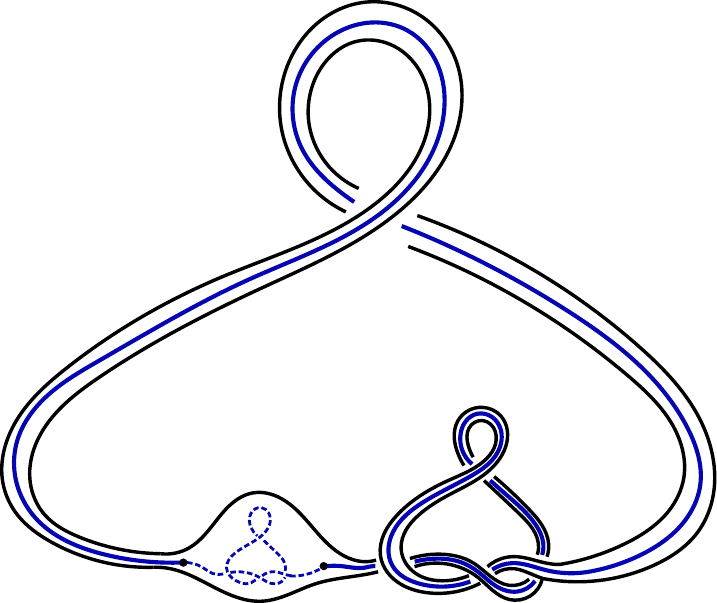}
	\caption{Schematical depiction of J. H. Conway's original idea (originally stated in the context of long knots, although we have adapted to closed knots in this picture) \cite[Fig. 31, p. 61]{Gardner}. \label{ConwayIdea}}
\end{figure}

This beautiful elementary idea, although highly ingenious and elegant, inspired the authors to define parametric connected-sums and satellites in the setting of Legendrian embeddings. 

% Nonetheless, we would like to emphasize that it is philosophically reminiscent of such idea in the sense that it provides, out of given parametric families, a parametric construction where some of the (in this case, Legendrian) components become, in a precise sense, isolated from the topology of the rest of the pieces.

%This parametric connected-sum construction was inspired by a J. H. Conway's argument (in the words of Martin Gardner: \textit{explained in a letter} \cite[p. 60]{Gardner}) where he used some knotted torus in order to prove a fundamental theorem in Knot Theory  (see \cite[Fig. 31, p. 31]{Gardner}). Precisely, he considered the connected sum of two embeddings and carried out an  

  Our construction differs from Conway's, we do not make use of knotted tori and we introduce it in a different framework, although we would like to emphasize that it is reminiscent of that idea. This construction allows us to construct new $n$-spheres of Legendrian embeddings out of given ones (see Theorem \ref{prop:ConnectedSumMap} for the precise statement). In particular, we can apply this parametric connected sum to some of Kálmán's loops \cite{kalman, kalman2} together with infinitely many other loops that we consider, yielding new infinite families of loops of Legendrian embeddings with non-trivial LCH monodromy invariant. Among such loops that we consider, we introduce the ``\textit{Legendrian pulling one knot through another}'' loop for Legendrian embeddings. This is the Legendrian version of the so-called smooth loop described by R. Budney \cite[Sec. 4]{Budney2}.
  
  We prove this last statement by carefully studying the behavior of \kalmans monodromy invariant \cite{kalman} under the parametric connected-sum operation. This requires some non-trivial technical results regarding monodromies of parametric-connected sum loops. Among other things, we study the effects in the LCH monodromy invariant of the changes in size of the embeddings involved during the construction.

    The parametric connected-sum construction that we define combines two different spheres $\gamma_1^k, \gamma_2^k$, $k\in\NS^n$, based at Legendrian embeddings $\gamma_1$, $\gamma_2$, respectively, and generates a new sphere based at $\gamma_1\#\gamma_2$. Roughly speaking, the construction can be described as follows (see Theorem \ref{prop:ConnectedSumMap} for a formal and precise statement): first shrink the $\gamma_1$-component in $\gamma_1\#\gamma_2$ and realise the sphere $\gamma^k_2$ for the second component while moving the $\gamma_1$-component in a rigid way accordingly. Then, inflate the $\gamma_1$-component until it recovers its original size, shrink the $\gamma_2$-component
    and repeat the process for $\gamma^k_2$. Furhtermore we will argue that, for some families, the aforementioned two steps can be carried out simultaneously.
    
  By means of this construction, we provide new families of loops of Legendrians with non-trivial LCH monodromy invariant. These are described in detail in Subsections \ref{familia1}, \ref{familia2} and \ref{ThirdFamily}. Nonetheless let us show, as an appetizer, such an example in Figure \ref{loopsMix} in order to provide the reader with a geometric flavour of the idea behind the construction.

%Roughly speaking, the construction allows to construct new $n$-spheres of Legendrian embeddings out of a given finite number $k>1$ of $n$-spheres of Legendrians Therefore, 

%Roughly speaking, this construction 
%By applying this construction to Kálmán's examples together with infinitely many other loops of Legendrians, we are able to construct

\begin{figure}[h]
	\centering
	\includegraphics[width=0.8\textwidth]{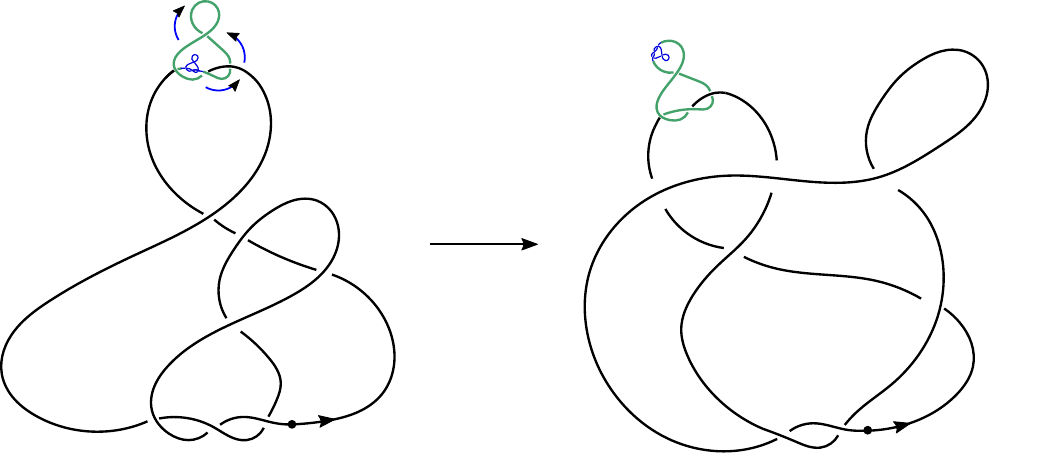}
	\caption{Particular example from the new infinite families of loops with non-trivial monodromy invariant that we construct in Section \ref{argumentomonodromia}. The Figure represents two frames of a loop that belongs to the third family of examples (Subsection \ref{ThirdFamily}) and represents the parametric connected sum of two loops. The big black component is a positive right-handed trefoil performing a Kálmán's loop whereas the blue and green smaller knots are right-handed trefoils simultaneously performing a Legendrian ``pulling one knot through the other'' loop (introduced in Subsection \ref{familia2}).} \label{loopsMix}
\end{figure}

\subsection{Parametric satellite constructions}

The Legendrian satellite construction \cite{EtnyreVertesi, NgTraynor} has turned out to be fundamental, in the sense that it yields structure theorems within the space of Legendrian embeddings for the Legendrian satellite construction. This means that such topological operation allows to determine the structure of some Legendrian embedding isotopy classes in terms of the underlying classes involved in the construction. 

We will show that the classical satellite construction can be performed in families; i.e. works with parameters. The satellite construction can be described as follows. 

Take a Legendrian embedding $\gamma\in \Leg$, called the \textbf{companion embedding}, and fix a contact standard tubular neighborhood of it; i.e. a contact embedding 

\[T_\gamma:(\NS^1\times\D^2,\xi_\std)\rightarrow (\R^3,\xi_\std), \quad\text{where}\quad T_\gamma(t,0,0)=\gamma(t).\]

 Here, $(\NS^1\times\D^2,\xi_\std)\subseteq (J^1\NS^1,\xi_\std)$ is equipped with the contact structure $\xi_\std=\ker(dz-ydt)$ in coordinates $(t,y,z)\in \NS^1\times \R^2=J^1\NS^1$. Take also a Legendrian embedding $l:\NS^1\rightarrow (\NS^1\times\D^2,\xi_\std)$, which will be called the \textbf{pattern embedding}. Out of this data, one can now define the following new embedding.\footnote{The satellite operation can be performed with \em link \em embedding patterns as well. Everything that we say about satellites in this article adapts, word by word, to that case.}

\begin{definition}
    The \textbf{Legendrian satellite} embedding with companion embedding $\gamma$ and pattern embedding $l$ is the Legendrian embedding 
  
    \[ T_\gamma\circ l:\NS^1\rightarrow (\R^3,\xi_\std). \]
    
\end{definition}

A priori, this definition depends on the embedding $T_\gamma$ but we will see below (Lemma \ref{lem:FatLegendrians}) that it is actually independent of this datum up to Legendrian isotopy.

One of the main contributions of this article is the following Theorem, which introduces a parametric Satellite construction for Legendrian embeddings.

\begin{theorem}\label{thm:SatelliteMap}
    Let $l:\NS^1\rightarrow (\NS^1\times\D^2,\xi_\std)$ be a Legendrian embedding. There is a well defined continuous Legendrian satellite map 
    \begin{equation}\label{eq:SatelliteMap} \Sat(l):\Leg\rightarrow \Leg,\quad \gamma\mapsto \Sat(l)(\gamma),
    \end{equation}
satisfying the following property. For every compact family of Legendrians $\gamma^k\in \Leg$, $k\in K$, its image $\Sat(l)(\gamma^k)$, $k\in K$, is homotopic, through families of Legendrian embeddings, to a family which is obtained by applying simultaneously the satellite construction to every $\gamma^k\in \Leg$, $k\in K$. 
\end{theorem}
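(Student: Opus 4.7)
The plan is to factor the satellite operation through an auxiliary space of ``fat'' Legendrian embeddings. Let $\FatLeg$ denote the space of pairs $(\gamma, T_\gamma)$, where $\gamma \in \Leg$ and $T_\gamma \colon (\NS^1 \times \D^2, \xi_\std) \to (\R^3, \xi_\std)$ is a standard contact tubular neighborhood of $\gamma$. On $\FatLeg$ the satellite operation admits a tautologically continuous definition: the evaluation map $\ev_l \colon \FatLeg \to \Leg$, $(\gamma, T_\gamma) \mapsto T_\gamma \circ l$, is manifestly continuous in both entries, since it is simply composition with the fixed Legendrian $l$. The entire content of the theorem is then to transport this construction to $\Leg$ and verify that it behaves well in families.

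The key step is to show that the forgetful map $\pi \colon \FatLeg \to \Leg$, $(\gamma, T_\gamma) \mapsto \gamma$, is a Serre fibration with contractible fibers. Contractibility of the fiber over a fixed $\gamma$ is essentially the content of Lemma \ref{lem:FatLegendrians}: two standard contact tubular neighborhoods of $\gamma$ differ, up to an ambient contact diffeomorphism of $(\NS^1 \times \D^2, \xi_\std)$ fixing the core, by a choice of framing of the conformal symplectic normal bundle of $\gamma$, and the space of such framings is contractible. The homotopy lifting property is the corresponding parametric statement: any continuous compact family $\gamma^k \in \Leg$, $k \in K$, admits a continuous family $T_{\gamma^k}$ of standard contact tubular neighborhoods, and any partial choice defined on a closed subset of $K$ extends to all of $K$. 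This is a parametric Moser-type construction patched by a partition of unity on the parameter space.

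With this in hand, $\pi$ is a weak homotopy equivalence between spaces of CW homotopy type (both are Fr\'echet manifolds in the standard $C^\infty$ topology), so it admits a continuous homotopy inverse $s \colon \Leg \to \FatLeg$. We define
\[
\Sat(l) \;:=\; \ev_l \circ s \;\colon\; \Leg \to \Leg,
\]
which is continuous. For the parametric assertion, given a compact family $\gamma^k$, $k \in K$, the fibration property produces a continuous lift $\widetilde{\gamma}^k = (\gamma^k, T_{\gamma^k}) \in \FatLeg$, and the family $\ev_l(\widetilde{\gamma}^k) = T_{\gamma^k} \circ l$ is by construction the ``simultaneous satellite'' of $\gamma^k$. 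Since any two lifts of the same family are homotopic in $\FatLeg$ (using contractibility of fibers together with the homotopy lifting property), $s(\gamma^k)$ and $\widetilde{\gamma}^k$ are homotopic in $\FatLeg$, and therefore $\Sat(l)(\gamma^k)$ is homotopic to $T_{\gamma^k} \circ l$ through families of Legendrian embeddings.

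The main obstacle is establishing the parametric neighborhood theorem underlying the fibration property of $\pi$. The pointwise statement is classical contact geometry, but executing the Moser-style construction continuously over a parameter space, gluing local choices via a partition of unity on $K$, and verifying the lifting property in the appropriate $C^\infty$ topology, requires some care with smoothness and with the dependence on parameters near the boundary of the neighborhoods. Once that parametric statement is secured, the rest of the proof is a formal consequence of having a Serre fibration with contractible fibers between spaces of CW homotopy type.
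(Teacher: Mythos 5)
Your proposal follows essentially the same route as the paper: factor the construction through the space $\FatLeg$ of contact tubular neighborhoods, observe that evaluation against the pattern $l$ is tautologically continuous there, show that restriction to the core is a fibration with contractible fibers, and define $\Sat(l)$ by composing with a homotopy inverse of that map. The architecture is correct and the parametric assertion is deduced exactly as in the paper.

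The one substantive point is that you leave the key lemma --- weak contractibility of the fiber $\FatLeg_\gamma$ --- as an appeal to a ``parametric Moser-type construction,'' whereas this is where all the content of the theorem lives. Two remarks there. First, your description of the fiber as a torsor over framings of the conformal symplectic normal bundle is not quite right in this dimension: for a Legendrian curve in a contact $3$-manifold that bundle has rank zero, so there is no framing choice; the contractibility has to come from the space of germs of contact embeddings of $(\NS^1\times\D^2,\xi_\std)$ fixing the core. Second, the paper avoids Moser and partitions of unity entirely by an explicit contraction: after pushing a compact family $T^k$ into a fixed $J^1\NS^1$ model via the fiberwise contact dilations $\phi^u(t,y,z)=(t,uy,uz)$, one conjugates, $T^{k,u}=\phi^{1/u}\circ T^k\circ\phi^u$, and lets $u\to 0$ to linearize along the zero section; the limit is forced to have the form $(t,y,z)\mapsto(t,\lambda^k(t)y+(\lambda^k)'(t)z,\lambda^k(t)z)$ with $\lambda^k>0$, and one then interpolates $\lambda^k$ linearly to $1$. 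This is both shorter and sidesteps the $C^\infty$-topology bookkeeping you flag as the main obstacle; if you want to complete your write-up, substituting this scaling argument for the Moser step is the cleanest way to do it.
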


As a consequence, we can define new invariants at the higher-homotopy level arising from this construction as follows.

\begin{corollary}\label{SatelliteInvariants}
        For every $n>0$ and any Legendrian embedding $l:\NS^1\rightarrow (\NS^1\times\D^2,\xi_\std)$, the parametric Legendrian satellite map induces well defined group homomorphisms at the $\pi_n$-homotopy level:
        \[ \Sat(l)_{*n}:\pi_n\left(\Leg,\gamma\right) \rightarrow \pi_n\left(\Leg,\Sat(l)(\gamma)\right). \]
    \end{corollary}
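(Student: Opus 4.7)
The plan is to derive the corollary as a direct formal consequence of Theorem \ref{thm:SatelliteMap}. That theorem provides a \emph{continuous} map $\Sat(l):\Leg\to\Leg$ sending the basepoint $\gamma$ to $\Sat(l)(\gamma)$, hence a continuous pointed map $(\Leg,\gamma)\to(\Leg,\Sat(l)(\gamma))$. By the standard functoriality of the homotopy-group functor, every such map induces, for each $n\geq 1$, a group homomorphism
\[
\Sat(l)_{*n}:\pi_n(\Leg,\gamma)\longrightarrow\pi_n(\Leg,\Sat(l)(\gamma)),
\]
defined by sending the class of a representative $\gamma^{(\cdot)}:(\NS^n,*)\to(\Leg,\gamma)$ to the class of the composition $\Sat(l)\circ\gamma^{(\cdot)}$. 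As in the usual arguments for continuous maps between topological spaces, this operation descends to homotopy classes and respects the concatenation-of-cubes product on $\pi_n$, so it is a well-defined group homomorphism.

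The parametric clause of Theorem \ref{thm:SatelliteMap} provides the geometric interpretation of $\Sat(l)_{*n}$. Given an $n$-sphere of companion embeddings $\{\gamma^k\}_{k\in\NS^n}$ based at $\gamma$, its image under $\Sat(l)_{*n}$ is the class of the family $\{\Sat(l)(\gamma^k)\}_{k\in\NS^n}$ obtained by simultaneously performing the satellite construction with pattern $l$ on every element of the family, since by the theorem this latter family is homotopic through families of Legendrian embeddings to $\Sat(l)\circ\gamma^{(\cdot)}$. No real obstacle arises at this stage: the entire analytic and geometric content, namely the existence of a continuous choice of $\Sat(l)$ and the simultaneous-satellite description of its action on compact families, has been absorbed into Theorem \ref{thm:SatelliteMap}, and the corollary is simply the application of the $\pi_n$-functor to that result.
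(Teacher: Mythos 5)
Your proposal is correct and matches the paper's (implicit) argument exactly: the paper offers no separate proof of this corollary, treating it as an immediate consequence of the continuity of $\Sat(l)$ from Theorem \ref{thm:SatelliteMap} together with the functoriality of $\pi_n$ for pointed continuous maps. Your additional remark that the parametric clause of the theorem supplies the geometric meaning of the induced homomorphism is consistent with how the paper uses the corollary later.
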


%As an application of this construction, we obtain the following Structure theorem for $\pi_n$-surjectivity.

\textbf{Acknowledgements:} The first author wants to thank Wei Zhou for valuable discussions. The second author would like to thank Tobias Ekholm for kindly answering his questions and for the nice Remark \ref{EkholmRemark}. This gave us the courage to prove that the constructed loops have indeed non-trivial LCH monodromy invariant. He would also like to thank Fabio Gironella for his suggestions and fine comments to a preliminary version of his PhD thesis and Marián Poppr for useful conversations. Finally, he would like to thank his advisor Álvaro del Pino and the Geometry group of Utrecht University where he delivered a talk about parametric connected-sums in 2019 yielding valuable discussions, as well as Antonio Alarcón and the Institute of Mathematics of the University of Granada (IMAG) for providing him with nice environments to develop this and other projects. 

During part of the development of this work the first author was supported by ``Beca de Personal Investigador en Formación UCM'' and the second author was supported by the ``Programa Predoctoral de Formación de Personal Investigador No Doctor'' scheme funded by the Basque Department of Education (``Departamento de Educación del Gobierno Vasco''). The authors were also funded by the
Spanish Research Project MTM2016-79400-P.

\section{Constructions in the space of Legendrian embeddings}

In this Section we introduce the two main geometric constructions in the article; namely, the parametric connected-sum and satellite constructions. First, we will see that the satellite operation can be carried out parametrically by making use of what we will call \textit{fat Legendrians}. 

We will later on move to the second main construction: parametric connected sums. We will interpret the (usual) Legendrian connected sum operation of two Legendrians introduced in \cite{EtnyreHonda} by means of \textit{Legendrians in standard position}, which are a special type of satellites of the standard Legendrian unknot. This will allow us to define a parametric notion of connected sums. Finally, we will explain how to diagrammatically depict this operation by means of what we call the \textit{Elephant-Fly} construction.% and generalize it for families of Legendrians by making use of fat Legendrians. We will view the Legendrian First, we introduce the notions of Legendrians in standard positions which are Legendrian satellites of a standard Legendrian unknot \cite{EtnyreVertesi,NgTraynor}; and fat Legendrians which we will use to produces satellites of families of Legendrians. Then, we will view the connected sum of two Legendrians introduced in \cite{EtnyreHonda} as a Legendrian in standard position and generalize it for families of Legendrians by making use of fat Legendrians. Finally, we will give a diagrammatic description of the parametric connected sum in the Lagrangian projection that we name Elephant-Fly construction.

\subsection{Legendrian satellite embeddings.}

We will denote by $\D^k\subseteq \R^k$  the euclidean unit $k$-disk centered at the origin. For the sake of readability, we make the non-standard identification $\NS^1=[-2,2]/\simeq$.

\begin{definition}
    A \textbf{fat Legendrian} embedding is a contact embedding $T:(\NS^1\times\D^2,\xi_\std)\rightarrow (\R^3,\xi_\std)$.
\end{definition}

Denote the space of fat Legendrians as $\FatLeg$. Observe that given any pattern $l:\NS^1\rightarrow (\NS^1\times\D^2,\xi_\std)$ there is a well defined Serre fibration

\begin{equation}\label{eq:SatelliteFat}
G_l:\FatLeg\rightarrow \Leg, \quad T\mapsto T\circ l.
\end{equation}

Note that by taking a companion embedding $\gamma$  and fixing a contact tubular neighborhood $T_\gamma$ of it as before, $G_l(T_\gamma)$ is thus a Legendrian satellite with companion $\gamma$ and pattern $l$. A particular pattern of interest is the $0$-section $l_0:\NS^1\hookrightarrow \NS^1\times\{0\}\subseteq (\NS^1\times \D^2,\xi_\std)$ with associated fibration

\begin{equation}\label{eq:FatRestriction}
 G_0:=G_{l_0}:\FatLeg\rightarrow \Leg, \quad T\mapsto T\circ l_0.
\end{equation}

The fiber of $G_0$ over $\gamma$ is $\FatLeg_{\gamma}=\{T\in\FatLeg :\gamma=T\circ l_0\}$; i.e. the space of contact tubular neighborhoods of $\gamma$.

\begin{proof}[Proof of Theorem \ref{thm:SatelliteMap}]
The result is a consequence of the following Lemma.%\noindent\textit{Proof of Theorem \ref{thm:SatelliteMap}}. The result is a consequence of the following Lemma.

\begin{lemma}\label{lem:FatLegendrians}
     The fibers $\FatLeg_{\gamma}$ of the fibration $G_0:\FatLeg\rightarrow \Leg$ are contractible and, thus, $G_0$ is a homotopy equivalence.
\end{lemma}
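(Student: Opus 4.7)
The plan is to identify $\FatLeg_\gamma$, up to homotopy, with a manifestly contractible space of adapted framings of $\gamma$, via the 1-jet along the zero section.

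First, I would use the contact tubular neighborhood theorem for Legendrians in $(\R^3,\xi_\std)$ to see $\FatLeg_\gamma$ is non-empty. Next, I would introduce the 1-jet evaluation map $j_1:\FatLeg_\gamma\to\mathcal{F}_\gamma$, where $\mathcal{F}_\gamma$ denotes the space of 1-jets along the zero section of fat Legendrians with core $\gamma$. At each $t\in\NS^1$, such a 1-jet is a linear isomorphism $L_t:T_{(t,0,0)}(\NS^1\times\D^2)\to T_{\gamma(t)}\R^3$ with $L_t(\partial_t)=\gamma'(t)$, sending $\Span(\partial_t,\partial_y)$ onto $\xi_{\gamma(t)}$ with the correct orientation, and satisfying $\alpha_\std(L_t(\partial_z))>0$. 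By direct inspection, at each $t$ the admissible values of $L_t(\partial_y)$ form an open half-plane in $\xi_{\gamma(t)}$ (the side of $\R\gamma'(t)$ picked out by the orientation) while the admissible values of $L_t(\partial_z)$ form the open half-space $\{\alpha_\std>0\}$ in $T_{\gamma(t)}\R^3$; both are convex and contractible. Hence $\mathcal{F}_\gamma$, the space of sections over $\NS^1$ of a bundle with contractible fibers, is itself contractible.

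Next, I would verify that $j_1$ is a Serre fibration with contractible fibers. The fibration property follows from the parametric contact neighborhood theorem: any prescribed 1-jet along $\gamma$ extends to a fat Legendrian, continuously in parameters. For the contractibility of a fiber $j_1^{-1}(J)$, I would run a parametric Moser argument: given a family $\{T_k\}_{k\in K}\subset j_1^{-1}(J)$, the pulled-back contact forms $T_k^*\alpha_\std$ all agree with $J^*\alpha_\std$ to first order along the zero section, so a family of Moser vector fields vanishing to first order along the zero section integrates to an isotopy $\phi_k^s$ of $\NS^1\times\D^2$ conjugating $\{T_k\}$ to a constant family. Combining these steps, $\FatLeg_\gamma$ sits as the total space of a Serre fibration with contractible fibers over a contractible base, hence is contractible. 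The second assertion — that $G_0$ is a homotopy equivalence — is then immediate, since $G_0$ is already known to be a Serre fibration and its fibers have just been shown to be contractible.

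The main obstacle is the parametric Moser step: the Moser vector fields must vanish to first order along the zero section (to preserve the fixed 1-jet $J$) and depend continuously on $k$. This requires a careful Taylor expansion of the contact forms $T_k^*\alpha_\std$ along the zero section to exhibit the primitive whose Hamiltonian vector field does the job, but is a routine adaptation of the standard Moser trick to the contact category with parameters.
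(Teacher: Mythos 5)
Your overall strategy -- linearize along the zero section and reduce contractibility of $\FatLeg_\gamma$ to contractibility of the linearized data -- is sound and is in fact the same germ of an idea as the paper's proof, but there is a concrete error in the step you settle ``by direct inspection''. The admissible values of $L_t(\partial_y)$ and $L_t(\partial_z)$ are \emph{not} independent. A contact embedding satisfies $T^*\alpha=f\alpha$ with $f>0$, and restricting $T^*d\alpha=df\wedge\alpha+f\,d\alpha$ to $\xi$ shows that $L_t|_{\xi}$ must be conformally symplectic for $d\alpha|_{\xi}$ with conformal factor equal to $f(t)=\alpha_{\std}(L_t(\partial_z))$. Concretely, $\alpha_{\std}(L_t(\partial_z))=d\alpha_{\std}(\gamma'(t),L_t(\partial_y))$ is pinned down once $L_t(\partial_y)$ is chosen, so $L_t(\partial_z)$ ranges over an affine plane, not the half-space $\{\alpha_{\std}>0\}$. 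As you have defined it, $\mathcal{F}_\gamma$ is strictly larger than the image of $j_1$, so over the connected (contractible) base $\mathcal{F}_\gamma$ the map $j_1$ has both empty and non-empty fibers and cannot be a Serre fibration; the argument breaks at that point. The error is repairable -- the correctly constrained fiber is an open half-plane times an affine plane, still convex and contractible -- but it must be fixed before anything downstream makes sense. Beyond that, the two remaining pillars (the homotopy lifting property for $j_1$, and the parametric Moser/Gray argument producing source isotopies of the \emph{fixed compact domain} $\NS^1\times\D^2$ that preserve the 1-jet along the zero section exactly) are asserted rather than proved; the second is genuinely delicate, since the Gray vector field for two contact forms agreeing only at points of the zero section vanishes there but need not vanish to first order, so preserving the fiber of $j_1$ is not automatic.

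For comparison, the paper short-circuits all of this with a single explicit deformation: after shrinking the family into a standard neighborhood $(J^1\NS^1,\xi_\std)$ of $\gamma$, it conjugates by the fiberwise contact scaling $\phi^u(t,y,z)=(t,uy,uz)$, so that $T^{k,u}=\phi^{1/u}\circ T^k\circ\phi^u$ retracts the whole family onto its fiberwise linearizations; the contact condition forces these limits to have the form $(t,y,z)\mapsto(t,\lambda^k(t)y+(\lambda^k)'(t)z,\lambda^k(t)z)$ with $\lambda^k>0$, a convex set. This one conjugation replaces your fibration-plus-Moser machinery and makes the coupling you missed appear automatically. I would encourage you to either adopt that scaling trick or, if you keep your route, to (i) correct the definition of $\mathcal{F}_\gamma$ as above and (ii) supply the relative parametric Moser argument in detail, including why the resulting isotopies can be arranged to be defined on all of $\NS^1\times\D^2$.
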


Indeed, after the Lemma, we may consider a homotopy inverse $\Fat:\Leg\rightarrow \FatLeg$ of $G_0$ and define $\Sat(l)(\cdot):=G_l\circ \Fat(\cdot)$.
\end{proof}

\begin{proof}[Proof of Lemma \ref{lem:FatLegendrians}]
     We will see that $\FatLeg_{\gamma}$ is weakly contractible\footnote{The Whitehead Theorem applies to all the spaces considered in this article.}. Let $T^k\in\FatLeg_\gamma$, $k\in K$, be a compact family of fat Legendrians. Fix a standard neighborhood $(J^1\NS^1,\xi_\std)$ of $\gamma(\NS^1)$, in such a way that 
     
     $$\gamma(t)=(t,0,0)\in (J^1\NS^1,\xi_\std)\subseteq (\R^3,\xi_\std).$$ 
     
     Notice that, by definition, $T^k(t,0,0)=\gamma(t)=(t,0,0)\in (J^1\NS^1,\xi_\std)\subseteq (\R^3,\xi_\std)$. Therefore, the existence of contact contractions along the fibers $\phi^u(t,y,z)=(t,uy,uz)$, $u>0$, in $(J^1\NS^1,\xi_\std)$, together with the compactness of the family $T^k$, allows us to assume that the image of each element in the family lies within $(J^1\NS^1,\xi_\std)$; i.e. we can write
     
     \[T^k:(\NS^1\times\D^2,\xi_\std)\rightarrow (J^1\NS^1,\xi_\std)\subseteq (\R^3,\xi_\std).\]

   It suffices to find a homotopy of contact embeddings $T^{k,u}:(\NS^1\times \D^2,\xi_\std)\rightarrow (J^1\NS^1,\xi_\std)\subseteq (\R^3,\xi_\std)$, $(k,u)\in K\times[-1,1]$, such that 
   \begin{itemize}
       \item [(i)] $T^{k,u}\in \FatLeg_\gamma$, for $(k,u)\in K\times[-1,1]$;
       \item [(ii)] $T^{k,1}=T^k$, for $k\in K$; and 
       \item [(iii)] $T^{k,-1}=i:(\NS^1\times\D^2,\xi_\std)\hookrightarrow (J^1\NS^1,\xi_\std)\subseteq (\R^3,\xi_\std)$ is the inclusion for every $k\in K$. 
   \end{itemize}
   
   Notice that condition (i) just means that $T^{k,u}(t,0,0)=\gamma(t)=(t,0,0)\in (J^1\NS^1,\xi_\std)\subseteq (\R^3,\xi_\std)$; i.e. $T^{k,u}$ preserves the $0$-section. Define the homotopy over $K\times[0,1]$ as the family of contact embeddings
   
   $$T^{k,u}=\phi^{\frac{1}{u}}\circ T^k\circ \phi^u,\quad (k,u)\in K\times[0,1].$$
   
   Here $T^{k,0}(t,y,z)$ is defined as the limit $T^{k,0}(t,y,z):=\lim_{u\mapsto 0} \phi^{\frac{1}{u}}\circ T^k\circ \phi^u(t,y,z)$. Note that $T^{k,0}$ is the ``vertical differential'' of $T^k$ along the $0$-section, so it is linear in the fiber directions. The contact embedding condition implies that
   
   $$T^{k,0}(t,y,z)=(t,\lambda^k(t)\cdot y +(\lambda^k)'(t)\cdot z,\lambda^k(t) \cdot z)$$ 
   
   for some positive function $\lambda^k:\NS^1\rightarrow (0,\infty)$. Extend the homotopy over $K\times[-1,0]$ by 
   
    $$ T^{k,u}(t,y,z)=(t, \lambda^{k,u}(t)\cdot y + (\lambda^{k,u})'(t)\cdot z, \lambda^{k,u} (t) \cdot z),$$
    
    where $\lambda^{k,u}=(1+u)\lambda^k-u$. This concludes the proof.
\end{proof}

\subsection{Legendrians in standard position and Legendrian tangles}\label{LongKnotsTangles}

\begin{definition}
A long Legendrian embedding $\hat{\gamma}:\R\to(\R^3,\xi_\std)$ is a Legendrian embedding such that $\hat{\gamma}(t)=(t, 0, 0)$ for $|t|>1$ and $\hat{\gamma}(\R)\cap\D^3=\hat{\gamma}[-1,1]$.
\end{definition}

Associated to a long Legendrian embedding $\hat{\gamma}$ there is a Legendrian arc $\tilde{\gamma}=\hat{\gamma}_{|[-1,1]}:[-1,1]\rightarrow (\D^3,\xi_\std)$. We will also refer to the Legendrian arc $\tilde{\gamma}$ as a \textbf{Legendrian tangle}. Note that all the non-trivial part of a long embedding $\hat{\gamma}$ may be shrunk to take place near the origin. Indeed, the diffeomorphism $\varphi^u(x,y,z)=(ux,uy,u^2z)$ is a contactomorphism for every $u>0$. Therefore, $\gamma^u(t):=\varphi^u\circ \gamma(\frac{1}{u} t)$ is a long Legendrian embedding for every $u\in(0,1]$ for which the non-trivial part of the embedding takes place in the $3$-disk of radius $u$. This gives raise to an isotopy of Legendrian tangles $\tilde{\gamma}^u$ as follows.

\begin{definition}\label{shrinking}
    Let $u\in(0,1]$. The \textbf{$u$-shrinking} of $\tilde{\gamma}$ is defined as the Legendrian tangle $S^u(\tilde{\gamma}):=\tilde{\gamma}^u$.
\end{definition}

Now we explain the closure operation for Legendrian tangles as a Legendrian satellite operation. Let $\beta:\NS^1\to(\R^3,\xi_\std)$ be the \textbf{standard Legendrian unknot} in $\left(\R^3,\xi_{\std}\right)$ which we assume to satisfy $\beta(t)=(t,0,0)$ for $t\in[-1,1]\subseteq \NS^1$. 

 For every Legendrian tangle $\tilde{\gamma}$ there is an associated Legendrian pattern $l_{\tilde{\gamma}}:\NS^1\rightarrow (\NS^1\times\D^2,\xi_\std)$, namely,
 \[
l_{\tilde{\gamma}}(t) = \begin{cases}
(t,0,0) & \text{if } t \in \NS^1 \setminus [-1,1], \\
\tilde{\gamma}(t) & \text{if } t \in [-1,1].
\end{cases}
\]

Note that for for $t\in\NS^1\backslash [-1,1]$, the curve is just the $0$-section $l_{\tilde{\gamma}}(t)=l_0(t)=(t,0,0)$. 

\begin{definition}\label{Closure} The \textbf{Legendrian closure} of $\tilde{\gamma}$ is the Legendrian embedding \[\gamma=\Sat(l_{\tilde{\gamma}})(\beta).\]  We will refer to $\tilde{\gamma}$ as the \textbf{$\gamma$-tangle}.
\end{definition}

A Legendrian embedding obtained as the closure of a tangle is said to be in \textbf{standard position}. The operation is depicted in Figure \ref{LongToShort} and is also explained, from equivalent approaches, in \cite[3.5]{FT} and \cite{DingGeiges}. Furthermore, F. Ding and H. Geiges showed in \cite{DingGeiges} that isotopy classes of long Legendrian embeddings/Legendrian tangles are in one-to-one correspondence, via the aforementioned process, with isotopy classes of Legendrian embeddings. 

\begin{figure}[h]
	\centering
	\includegraphics[width=0.6\textwidth]{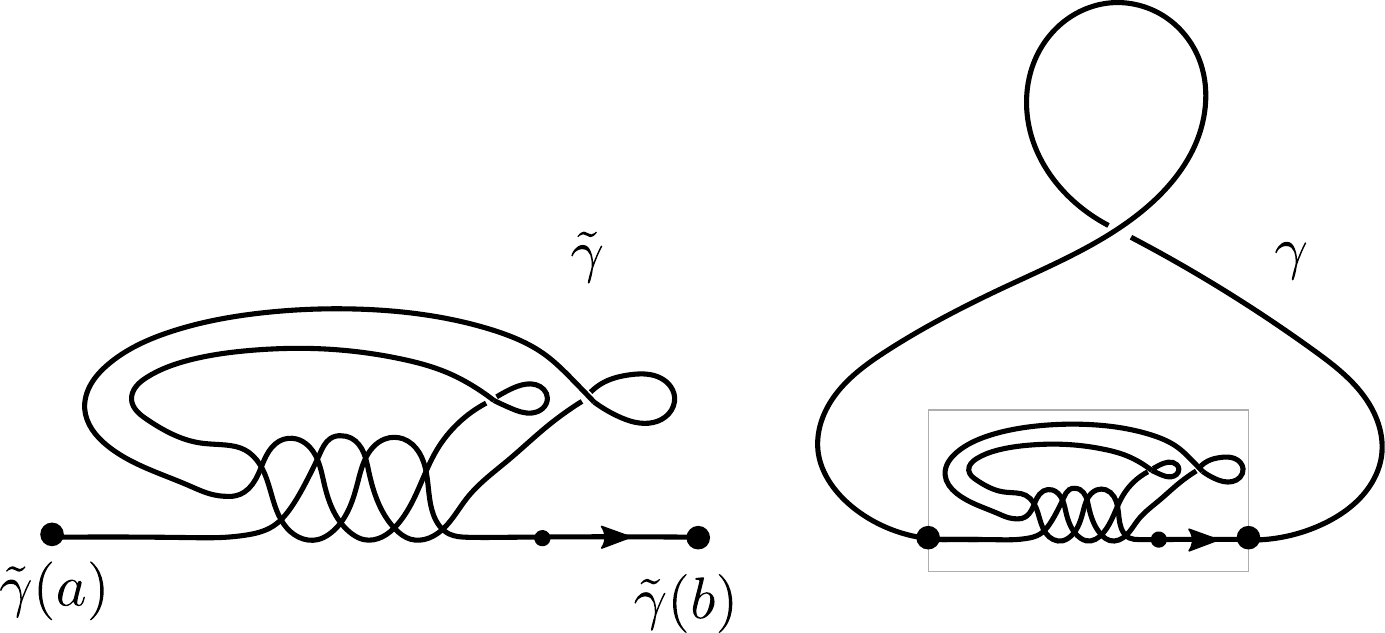}
	\caption{On the left, a Legendrian tangle $\tilde{\gamma}$ corresponding to a $K_{4,3}$ torus long knot. On the right, its Legendrian closure $\gamma$. Equivalently, we say that the Legendrian $K_{4,3}$ torus knot on the right is in standard position. \label{LongToShort}}
\end{figure}

\subsection{Legendrian connected sum}\label{ConnectedSums}

The Legendrian connected sum \cite{EtnyreHonda} of two Legendrian embeddings $\gamma_1, \gamma_2$ is a well-defined operation \cite{EtnyreHonda} and it admits the following diagrammatic description in $(\R^3, \xi_{\std})$  (see \cite[5.4]{Etnyre}). Working in the Lagrangian projection, we can assume without loss of generality (up to a translation) that one of the embeddings is on top of the other. Choose a small arc $\gamma_1(-\varepsilon, \varepsilon)$ for $\gamma_1$ and a contiguous small arc $\gamma_2(-\varepsilon, \varepsilon)$ for $\gamma_2$.

\begin{definition}\label{ConnectedSum}
The Legendrian \textbf{connected sum} $\gamma_1\#\gamma_2$ is the Legendrian embedding whose Lagrangian projection is obtained by joining $\gamma_1\left(\NS^1\setminus(-\varepsilon, \varepsilon)\right)$ with $\gamma_2\left(\NS^1\setminus(-\varepsilon, \varepsilon)\right)$ via the local model shown in Figure \ref{LagrangianSum} and whose orientation is also determined by the local picture in Figure \ref{LagrangianSum}.
\end{definition}

\begin{figure}[h]
	\centering
	\includegraphics[width=0.4\textwidth]{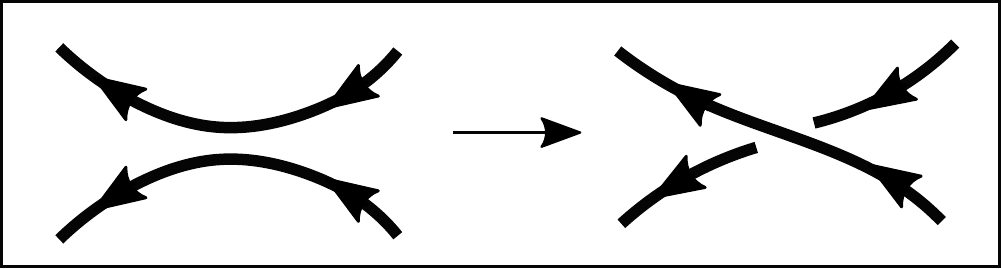}
	\caption{Diagrammatic local model for the connected sum of two Legendrian embeddings in the Lagrangian projection. Note that the local model for the arcs with reversed orientations follows from a $\pi$-radian rotation of this local model in the plane. \label{LagrangianSum}}
\end{figure}

Consider $k$ Legendrian embeddings $\gamma_1, \cdots, \gamma_k$ in $\left(\R^3,\xi_{\std}\right)$ that we assume to be in standard position.

\begin{proposition}\label{StandardModelPosition}
 $\gamma_1\#\cdots\#\gamma_k$ admits a representation in standard position with associated tangle given by the ordered concatenation of the $\gamma_i$-tangles, respectively, for $i=1,\ldots, k$. 
\end{proposition}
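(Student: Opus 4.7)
The plan is to reduce to the base case $k = 2$ by induction on $k$. Granting associativity of the Legendrian connected sum (a standard consequence of its well-definedness, shown in \cite{EtnyreHonda}), one writes $\gamma_1 \# \cdots \# \gamma_k = (\gamma_1 \# \cdots \# \gamma_{k-1}) \# \gamma_k$; if by the inductive hypothesis $\gamma_1 \# \cdots \# \gamma_{k-1}$ is in standard position with tangle the ordered concatenation of the first $k-1$ tangles, then applying the base case to this Legendrian and to $\gamma_k$ closes the induction.

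For $k = 2$, I would first construct an explicit candidate in standard position, and then show it is Legendrian isotopic to $\gamma_1 \# \gamma_2$. For the candidate, apply the shrinking of Definition \ref{shrinking} to both tangles $\tilde{\gamma}_1$ and $\tilde{\gamma}_2$ (via the contact scalings $\varphi^u$) so that their shrunken supports are disjoint. Insert them sequentially along the $0$-section portion $[-1,1] \subseteq \NS^1$ of the standard unknot $\beta$, forming the pattern $l_{\tilde{\gamma}_1 \ast \tilde{\gamma}_2}$ in the sense of Subsection \ref{LongKnotsTangles}. By Definition \ref{Closure}, its Legendrian closure is in standard position with tangle exactly the ordered concatenation.

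To identify this candidate with $\gamma_1 \# \gamma_2$ up to Legendrian isotopy, I would invoke well-definedness of the connected sum and choose the arcs $\gamma_i(-\varepsilon, \varepsilon)$ featured in Definition \ref{ConnectedSum} to lie on the trivial parts of each $\gamma_i$, i.e., where $\beta(t) = (t,0,0)$. In the Lagrangian projection both arcs are then parallel horizontal segments, and the local model of Figure \ref{LagrangianSum} fuses the two stacked unknots through a small Legendrian bridge. A Legendrian isotopy, localised in a Darboux ball containing only this bridge together with the two unknot arcs, converts the configuration into a single standard unknot; the two shrunken tangles, sitting in disjoint regions far away from the bridge, are then carried sequentially along the $0$-section of the resulting unknot, producing the candidate.

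The main obstacle will be making this last isotopy precise: formalising that the local model of Figure \ref{LagrangianSum}, glued to two stacked copies of $\beta$ at points far from the tangles, yields a standard Legendrian unknot whose $0$-section hosts both shrunken tangles in the correct order. This is essentially the local statement that Legendrian connected sum with a standard unknot is the identity, together with a Reidemeister-style planar isotopy carried out in a region disjoint from the tangle supports. Crucially, the preliminary shrinking step is what allows the isotopy to be confined to this local region and to leave the internal structure of each $\tilde{\gamma}_i$ rigidly intact.
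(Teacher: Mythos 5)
Your overall architecture coincides with the paper's: reduce to $k=2$, stack the two standard-position diagrams, join them by the local model of Figure \ref{LagrangianSum}, and then remove the resulting local configuration by an isotopy supported away from the two tangles. The gap sits exactly at the step you yourself flag as ``the main obstacle,'' and it is not a formality. In the Lagrangian projection a planar isotopy of the diagram does not automatically lift to a Legendrian isotopy: the $z$-coordinate is recovered by integrating $y\,dx$, so diagrammatic moves are obstructed by signed areas. The specific move needed to fuse the two stacked unknots into one is a Reidemeister $II^{-1}$ move killing the $2$-gon created by the connected-sum bridge, and by \cite[Theorem 4.1]{kalman} such a move is realizable by a Legendrian isotopy only when the area of that $2$-gon is smaller than the sum of the areas of the regions adjacent to its vertices. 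Your appeal to a ``Legendrian isotopy localised in a Darboux ball'' asserts precisely what has to be proved at this point.

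Relatedly, you assign the wrong role to the preliminary shrinking. You use it to keep the tangles rigidly intact and disjoint from the bridge region; in the paper it is the mechanism that legitimises the $II^{-1}$ move: taking $\gamma_2$ very small from the beginning forces the $2$-gon's area below the sum of the areas of the adjacent regions, which is Kálmán's criterion. Once you replace your unspecified local isotopy by this single area-controlled Reidemeister $II^{-1}$ move (this is exactly Figure \ref{connectedTangles}), your argument becomes the paper's proof; the reduction to $k=2$ and the construction of the candidate concatenated tangle are otherwise fine.
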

\begin{remark}\label{rmk:Reparametrizations}
    The concatenation of the $\gamma_i$-tangles must be reparametrized in order to have domain $[-1,1]$. For instance, in the case of two Legendrians $\gamma_1$ and $\gamma_2$, then (recall Definition \ref{shrinking}) the associated $\gamma_1\#\gamma_2$-tangle is defined as 
    \[
\widetilde{\gamma_{1}\#\gamma_{2}}(t) = \begin{cases}
S^{\frac{1}{2}}(\tilde{\gamma}_{1})(t+\frac{1}{2}) & \text{if } t\in[-1,0], \\
S^{\frac{1}{2}}(\tilde{\gamma}_{2})(t-\frac{1}{2}) & \text{if } t \in [0,1].
\end{cases}
\]

\end{remark}
\begin{proof}[Proof of Proposition \ref{StandardModelPosition}]
It is enough to show that this result holds for the connected sum of two knots since the argument can be iterated. The result follows by applying a Reidemeister $II^{-1}$ move to the connected sum as depicted in Figure \ref{connectedTangles}. Note that the area of the $2-$gon in dark grey can be made smaller than the sum of the areas of adjacent regions (in light grey) at its vertices (for example, by taking the $\gamma_2$ very small from the beginning). Since this is the condition in order for this Reidemeister $II^{-1}$-move to be valid in the Legendrian category (see \cite[Theorem 4.1]{kalman}), the result follows.
\end{proof}

\begin{figure}[h]
	\centering
	\includegraphics[width=0.7\textwidth]{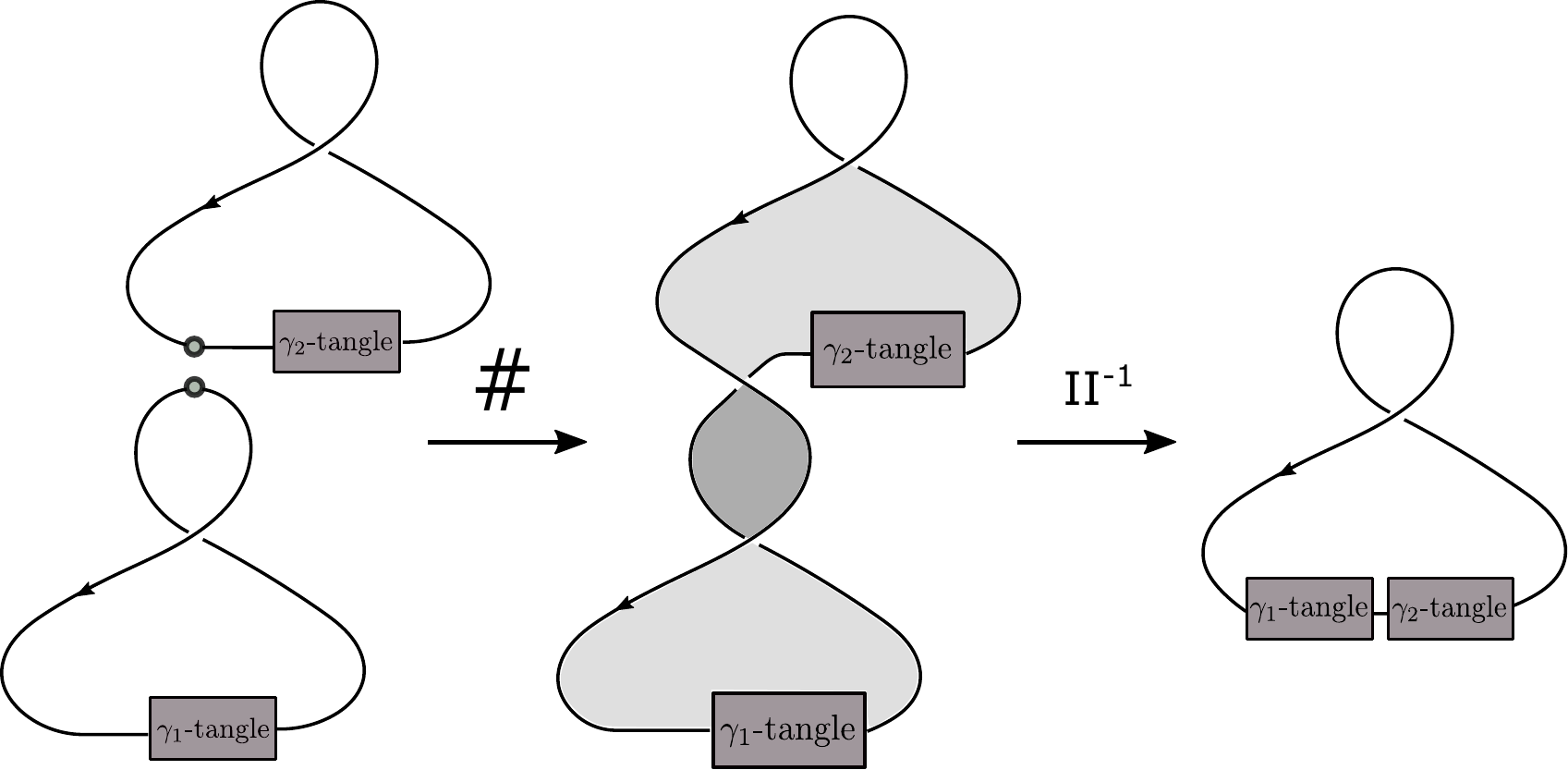}
	\caption{Legendrian connected sum of two embeddings $\gamma_1, \gamma_2$ followed by a legit Reidemeister $II^{-1}$ move. The area of the involved $2-$gon (in dark grey) can be made smaller than the sum of the areas of adjacent regions (in light grey) at its vertices.\label{connectedTangles}}
\end{figure}

    Let $\gamma_1\#\gamma_2$ be the connected sum, in standard position, of the two Legendrians $\gamma_1$ and $\gamma_2$ that are also in standard position obtained by means of Proposition \ref{StandardModelPosition}. Then,
     \[  \gamma_1\#\gamma_2=\Sat(l_{\widetilde{\gamma_1\#\gamma_2}})(\beta). \]
    Recall that $\beta$ stands for the \textbf{standard Legendrian unknot} in $\left(\R^3,\xi_{\std}\right)$ as in Definition \ref{Closure}.
    Following the notation from Remark \ref{rmk:Reparametrizations} we consider tangles $\tilde{\gamma}_1=\widetilde{\gamma_1\#\beta}$ and  $\tilde{\gamma}_2=\widetilde{\beta\#\gamma_2}$. Do note that, $\Sat(l_{\widetilde{\gamma_1\#\beta}})(\beta)$ coincides with $\beta$ over $\NS^1\backslash[-1,0]$ and $\Sat(l_{\widetilde{\gamma_1\#\beta}})(\beta)$ coincides with $\beta$ over $\NS^1\backslash[0,1]$. Moreover, both Legendrians are isotopic in a trivial way to $\gamma_1$ and $\gamma_2$, respectively. We will make an abuse of notation and just name them $\gamma_1$ and $\gamma_2$. %\textcolor{red}{Figura: En la figura 4 el ultimo nudo sin el segundo tangle es $\gamma_1$ y sin el primero es $\gamma_2$} \textcolor{blue}{Xabi: sí, estoy de acuerdo. Pero qué quieres decir con esto? Que añadamos ese remark? }
    
    With this notation, observe that the following symmetric relation holds:
    \begin{equation}\label{eq:Symmetry}
    \gamma_1\#\gamma_2=\Sat(l_{\tilde{\gamma}_2})(\gamma_1)=\Sat(l_{\tilde{\gamma_1}})(\gamma_2).
    \end{equation}
    This is clear from the diagram (see, for instance, the last picture on the right in Figure \ref{connectedTangles}). Nonetheless, it can also be formalized by taking tubular neighborhoods of $\gamma_1$ (respectively, $\gamma_2$) that coincide with the tubular neighborhood of $\beta$ over $\NS^1\backslash[-1,0]$ (respectively, $\NS^1\backslash[0,1]$) to realize the satellites. Lemma \ref{lem:FatLegendrians} ensures that we can do this. 
    \subsection{Parametric connected sums}\label{ParametricConnectedSums}
    It follows from Theorem \ref{thm:SatelliteMap} the existence of well defined continuous maps 
    \[ \Phi_1=\Sat(l_{\tilde{\gamma}_1}):(\Leg,\gamma_2)\rightarrow (\Leg,\gamma_1\#\gamma_2) \]
    and 
    \[ \Phi_2=\Sat(l_{\tilde{\gamma}_2}):(\Leg,\gamma_1)\rightarrow (\Leg,\gamma_1\#\gamma_2)\]
    
    between \em pointed \em spaces, where the image $\Phi_1(g)$ of an embedding $g\in\Leg$ is an embedding obtained from $g$ by performing connected sum with $\gamma_1$ (the symmetric statement holds by exchanging the subindices $1$ and $2$). Likewise, this construction works with parameters: for every $n>0$ these maps induce well defined continuous maps between the $n$-fold based loop spaces 
    
    \[ \Omega^n\Phi_1:\Omega^n_{\gamma_2}\Leg\rightarrow \Omega^n_{\gamma_1\#\gamma_2} \Leg \] and \[ \Omega^n\Phi_2:\Omega^n_{\gamma_1}\Leg\rightarrow \Omega^n_{\gamma_1\#\gamma_2} \Leg, \]

   where $\Omega^n\Phi_1$ maps each sphere $f\in\Omega^n_{\gamma_2}\Leg$ to a new sphere $\Phi_1(f)\in\Omega^n_{\gamma_1\#\gamma_2}\Leg$ which is homotopic, through spheres in $\Omega^n_{\gamma_1\#\gamma_2}\Leg$, to a family obtained from $f$ by performing connected sum with $\gamma_1$. Once again, the symmetric statement holds by exchanging the subindices $1$ and $2$.

     In particular, given $f_1\in\Omega^n_{\gamma_1} \Leg$ and $f_2\in\Omega^n_{\gamma_2} \Leg$ there is an associated concatenation $n$-sphere
     
     \[ \Phi_1(f_2)\cdot\Phi_2(f_1)\in \Omega^n_{\gamma_1\#\gamma_2}\Leg,\] 
     
     where $\cdot$ stands for the concatenation of based spheres. We encapsulate this result in the following Theorem, which represents one of the main constructions introduced in this article.

    \begin{theorem}\label{prop:ConnectedSumMap}
        For every $n>0$ there exists a well defined continuous Legendrian \textbf{parametric connected sum} map 
        
         \[ \#:\Omega^n_{\gamma_1}\Leg\times\Omega^n_{\gamma_2}\Leg \rightarrow \Omega^n_{\gamma_1\#\gamma_2} \Leg, \]
        
        which assigns to each pair $(f_1, f_2)\in \Omega^n_{\gamma_1}\Leg\times\Omega^n_{\gamma_2}\Leg$ the concatenation  $\Phi_1(f_2)\cdot\Phi_2(f_1)\in \Omega^n_{\gamma_1\#\gamma_2}\Leg$.
    \end{theorem}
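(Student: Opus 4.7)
The plan is to build $\#$ as a composition of three essentially formal constructions, the non-trivial input being entirely absorbed into Theorem \ref{thm:SatelliteMap}. First I would invoke that theorem with the patterns $l_{\tilde{\gamma}_1}$ and $l_{\tilde{\gamma}_2}$ associated to the (reparametrized) tangles from Remark \ref{rmk:Reparametrizations}, obtaining continuous self-maps $\Sat(l_{\tilde{\gamma}_1}), \Sat(l_{\tilde{\gamma}_2}) : \Leg \to \Leg$. The symmetric relation \eqref{eq:Symmetry} then shows that, setting $\Phi_1 := \Sat(l_{\tilde{\gamma}_1})$ and $\Phi_2 := \Sat(l_{\tilde{\gamma}_2})$, one has $\Phi_1(\gamma_2) = \Phi_2(\gamma_1) = \gamma_1\#\gamma_2$, so both refine to pointed continuous maps of pointed spaces $\Phi_1 : (\Leg, \gamma_2) \to (\Leg, \gamma_1\#\gamma_2)$ and $\Phi_2 : (\Leg, \gamma_1) \to (\Leg, \gamma_1\#\gamma_2)$.

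Second, I would apply the iterated based loop space functor $\Omega^n$, which is a continuous functor from pointed topological spaces to pointed topological spaces. This produces the continuous maps
\[\Omega^n\Phi_1 : \Omega^n_{\gamma_2}\Leg \to \Omega^n_{\gamma_1\#\gamma_2}\Leg, \qquad \Omega^n\Phi_2 : \Omega^n_{\gamma_1}\Leg \to \Omega^n_{\gamma_1\#\gamma_2}\Leg,\]
given by pointwise post-composition with $\Phi_1$ and $\Phi_2$. Third, I would invoke the standard continuous concatenation product
\[\mu_n : \Omega^n_{\gamma_1\#\gamma_2}\Leg \times \Omega^n_{\gamma_1\#\gamma_2}\Leg \to \Omega^n_{\gamma_1\#\gamma_2}\Leg\]
endowing any $n$-fold based loop space ($n \geq 1$) with its H-space structure, and finally set
\[\#(f_1, f_2) := \mu_n\bigl(\Omega^n\Phi_1(f_2),\ \Omega^n\Phi_2(f_1)\bigr) = \Phi_1(f_2)\cdot \Phi_2(f_1).\]
Continuity then follows because each factor is continuous and composition of continuous maps is continuous.

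The one subtlety that deserves attention is \emph{strict} pointedness in the first step: the map $\Sat(l)$ in the proof of Theorem \ref{thm:SatelliteMap} is defined as $G_l \circ \Fat$ for a choice of homotopy inverse $\Fat$ to the fibration $G_0$, so a priori $\Phi_1(\gamma_2)$ and $\Phi_2(\gamma_1)$ are only Legendrian isotopic to $\gamma_1\#\gamma_2$ rather than literally equal to it. This is not a genuine obstacle, because Lemma \ref{lem:FatLegendrians} shows that the fibers of $G_0$ are contractible, so one can refine the choice of $\Fat$ on a neighborhood of $\gamma_1$ and $\gamma_2$ so that the identifications in \eqref{eq:Symmetry} hold on the nose; alternatively, a fixed isotopy path from $\Phi_i(\gamma_{3-i})$ to $\gamma_1\#\gamma_2$ can be absorbed by the canonical homotopy equivalence between based loop spaces at two points lying in the same path component. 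Either way, the output lies in $\Omega^n_{\gamma_1\#\gamma_2}\Leg$ continuously in the pair $(f_1, f_2)$, which is the content of the theorem.
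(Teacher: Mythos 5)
Your proposal is correct and follows essentially the same route as the paper: the authors also define $\Phi_1=\Sat(l_{\tilde{\gamma}_1})$ and $\Phi_2=\Sat(l_{\tilde{\gamma}_2})$ via Theorem \ref{thm:SatelliteMap}, use the symmetric relation \eqref{eq:Symmetry} to view them as pointed maps, apply $\Omega^n$, and concatenate. Your remark on strict pointedness matches the paper's own resolution, which fixes the tubular neighborhoods compatibly with that of $\beta$ using Lemma \ref{lem:FatLegendrians}.
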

 Given two $n$-spheres $f_i(k)=\gamma_i(t,k)\in\Leg$, $(k,i)\in\NS^n\times\{1,2\}$, based at $f_i(N)=\gamma_i(t,N)=\gamma_i(t)$, we will denote the sphere obtained by means of the previous map $\#$ by \[\gamma_1\#\gamma_2(t,k) \]
 
 and we will call it the \textbf{parametric connected sum} of $\gamma_1(t,k)$ and  $\gamma_2(t,k)$.

    The following Subsection is devoted to diagrammatically describe the connected-sum map. This will be key in order to construct the new examples we present in Subsection \ref{LoopsExamples}.

\subsection{Diagrammatic representation of the parametric connected sum: the Elephant-Fly construction.}\label{ElefanteMosca}

In order to provide practical applications, construct new examples and work with the connected sum map from Theorem \ref{prop:ConnectedSumMap}, it will be useful to have in mind a diagrammatic representation of this operation. This will be explained in the present Subsection.

Let $\gamma_1\#\gamma_2(t,k)$ be the connected-sum $n$-sphere obtained out of two given $n$-spheres $\gamma_1(t,k)$, $\gamma_2(t,k)$ by means of Theorem \ref{prop:ConnectedSumMap}. In this Section we explain how to diagrammatically represent the Lagrangian projection of (a homotopic) sphere of Legendrians. We call this representation the \textbf{Elephant-Fly construction}. This terminology will be justified along the description. 

Note that by definition $\gamma_1\#\gamma_2(t,k)$ is the concatenation of two $n$-spheres $\mathcal{E}_1(t,k):=\Phi_2(\gamma_1(t,k))$ and $\mathcal{E}_2(t,k):=\Phi_1(\gamma_2(t,k))$. We will homotope both spheres $\mathcal{E}_1$ and $\mathcal{E}_2$, respectively, into new spheres for which the Lagrangian projection will be accurately described. We explain the procedure for $\mathcal{E}_1$, since for $\mathcal{E}_2$ is completely analogous. 

First, up to possibly having to perform an initial homotopy, we can assume that $\gamma_1(t,k)=\gamma_1(t)$ for every $k\in U\subseteq \NS^n$, where $U$ is some small open disk centered at the north pole $N\in\NS^n$. By definition, \[  \mathcal{E}_1(t,k)=\Phi_2(\gamma_1(t,k))=\Sat(l_{\tilde{\gamma}_2})(\gamma_1(t,k)). \]

We will perform a first deformation of $\mathcal{E}_1(t,k)$, which will continuously shrink the $\gamma_2$-tangle radially along the disk $U$. Roughly speaking, this will produce the effect of somehow ``isolating'' the topology of the $\gamma_2$-tangle.

In order to do so, fix the height function $h:\NS^n\rightarrow [-1,1]$ on the sphere, a small enough $\delta>0$ and a smooth function $\rho=\rho_\delta:\NS^n\rightarrow[\delta,1]$ such that:
\begin{itemize}
    \item[i)] $\rho|_{\NS^n\backslash U}\equiv \delta$,
    \item[ii)] $\rho(N)=1$ and
    \item[iii)] $d\rho(\nabla h)\geq0$.
\end{itemize}  If $\delta>0$ is chosen small enough, the family of arcs $\gamma_1(t,k)|_{[\frac{1}{2}-\delta,\frac{1}{2}+\delta]}$ can be regarded as ``almost constant'', i.e.  they look like a small segment for all $k\in K$. Use $\rho$ to define the family of tangles 

\[ \tilde{\gamma}_2(t,(k,u)):= S^{(1-u)\rho(k)+u}(\tilde{\gamma}_2)(t),\quad (k,u)\in\NS^n\times[0,1]. \]

Note that $\tilde{\gamma}_2(t,(k,1))= S^1(\tilde{\gamma}_2)(t)=\tilde{\gamma}_2(t)$ for all $k\in \NS^n$ and that $\tilde{\gamma}_2(t,(k,0))=S^\delta(\tilde{\gamma}_2)(t)$, for $k\in \NS^n\backslash U$.

This family of tangles allows to define the homotopy of spheres 

\[\mathcal{E}_1(t,k,u)=\Sat( l_{\tilde{\gamma}_2(t,(k,u))})(\gamma_1(t,k)) \]

between $\mathcal{E}_1(t,k)=\mathcal{E}_1(t,k,1)$ and the sphere $\mathcal{E}_1(t,k,0)$.

The desired sphere of embeddings is $\mathcal{E}_1(t,k,0)$, that in an abuse of notation we just relabel as $\mathcal{E}_1(t,k)$. Depicting the Lagrangian projection of this construction is straightforward after these considerations:
\begin{itemize}
    \item [(i)] \textbf{Shrinking:} For $k\in U$, the Lagrangian projection of $\mathcal{E}_1(t,k)$ coincides with the Lagrangian projection of $\gamma_1\#\gamma_2$ except in the $\gamma_2$-tangle region, in which there is a continuous shrinking of the $\gamma_2$-tangle, along the flow lines of $-\nabla h$, until it becomes arbitrarily small; i.e. it becomes $S^\delta(\tilde{\gamma}_2)$ for a small enough $\delta>0$. See Figure \ref{InflatingIsotopyConnectedSum}.

    \begin{figure}[h]
	\centering
	\includegraphics[width=0.65\textwidth]{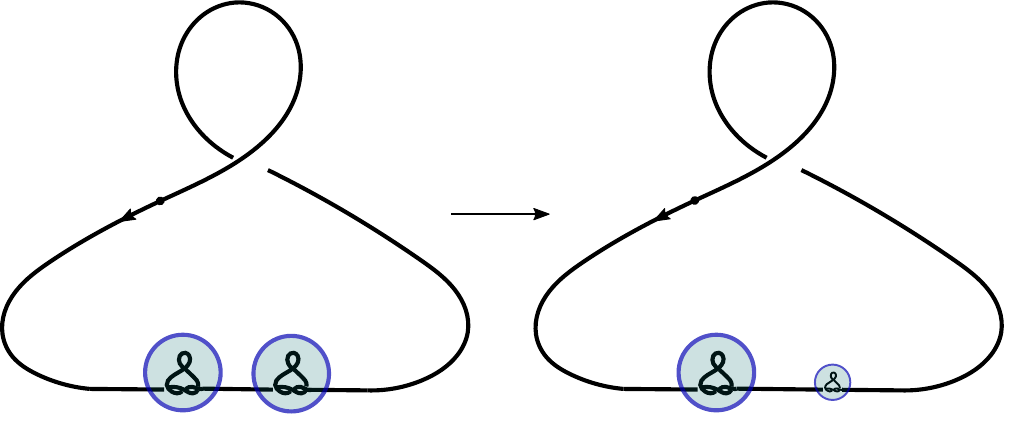}
	\caption{Schematic depiction of the shrinking of the $\gamma_2$-tangle (right) until it becomes arbitrarily small. After this shrinking process, the $\gamma_2$-tangle can be represented as a small tangle (the fly) moving rigidly along the $n$-sphere. Actually, we may even represent it as a small box later on (See Figure \ref{Fig:LoopBoxRotation}, for instance). Do note that the rest of the embedding (corresponding to the exterior of the $\gamma_2$-region) remains unaffected by this prior deformation. }\label{InflatingIsotopyConnectedSum}
\end{figure}

    \item[(ii)] \textbf{The fly along the elephant:} For $k\in \NS^n\backslash U$, the Lagrangian projection of $\mathcal{E}_1(t,k)$ coincides with the one of $\gamma_1(t,k)$ except at the very small segment $\gamma_1(t,k)_{|[\frac{1}{2}-\delta,\frac{1}{2}+\delta]}$ that is replaced by the $\delta$-shrinking tangle $S^\delta(\tilde{\gamma}_2)$. The value $\delta>0$ is fixed to be arbitrarily small so that the (very small) tangle $S^\delta(\tilde{\gamma}_2)$ (the fly) can be understood as just moving rigidly along the family $\gamma_1(t,k)$ (the elephant). See Figure \ref{Fig:LoopBoxRotation}.

\begin{figure}[h]
	\centering
	\includegraphics[width=0.75\textwidth]{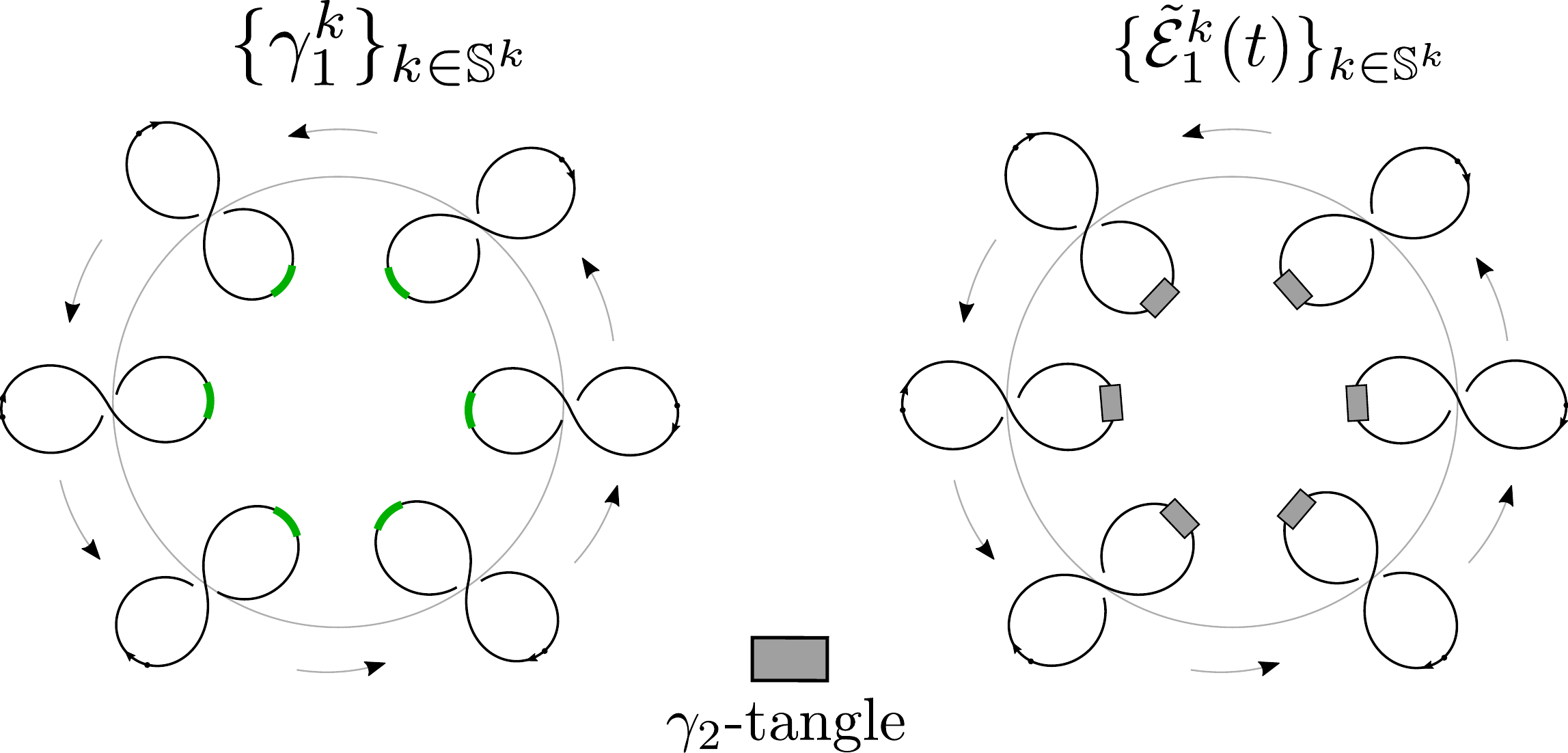}
	\caption{The original $k$-sphere $\lbrace\gamma^k_1\rbrace_{k\in\NS^n}$ of Legendrian embeddings is depicted on the left. In this case we have depicted a $1$-sphere corresponding to a Lagrangian rotation of a Legendrian unknot. On the right we depict the $n$-sphere  $\lbrace{{\mathcal{E}}}^k_1(t)\rbrace_{k\in\NS^n}$ (right) obtained by replacing a small segment (marked in green on the left) by an arbitrarily small $\gamma_2$-tangle (the fly). The new $n$-sphere can thus be regarded as the original embedding (the elephant) realising the original $n$-sphere with a small box/tangle attached to it (the fly) and moving rigidly with it.  \label{Fig:LoopBoxRotation}}
\end{figure}

\end{itemize}

After realising the sphere $\mathcal{E}_1$, since the whole sphere is defined as the concatenation of $\mathcal{E}_1$ with $\mathcal{E}_2$, then one proceeds to unshrink/inflate the $\gamma_1$-tangle (fly) until it recovers its original size. Then, the sphere $\mathcal{E}_2$ is realised by repeating the process but with the roles of the $\gamma_1$-tangle and the $\gamma_2$-tangle reversed (i.e. now the $\gamma_2$-tangle plays the role of the fly and viceversa).

\begin{remark}
    The terminology \textit{\textbf{Elephant-fly construction}} is motivated by (ii) since it resembles a moving elephant (the embedding $\gamma_1$) with a fly attached to its back (the $\delta$-shrinking of the $\gamma_2$-tangle). The calligraphic ``E'' letter $\mathcal{E}_1$ in the notation encodes that the embedding $\gamma_1$ takes the role of the elephant. We will likewise say that $\gamma_2$ takes the role of the fly.
\end{remark}

We will finally discuss some particular instances of $n$-spheres of Legendrians; namely  $n$-spheres which are the closure of an $n$-sphere of tangles, which admit very easily representable depictions. 

\subsubsection{Parametric connected sums and spheres of embeddings in standard position}

A particular type of $n$-spheres of embeddings which can be of special interest are those that take place ``within a box'' or, more precisely, those that consist of Legendrian embeddings which are in standard position. Let us elaborate on this.

Consider an $n$-sphere
$\lbrace\hat{\gamma}^k\rbrace_{k\in\NS^n}$ of long Legendrian embeddings; i.e. where $\hat{\gamma}^k(t)=(t,0,0)$ for $|t|>1$ and all $k\in\NS^n$. We can thus consider the associated $n$-sphere of tangles $\{\tilde{\gamma}^k\}_{k\in\NS^N}$ defined as in Subsection \ref{LongKnotsTangles}; i.e. $\tilde{\gamma}^k:=\hat{\gamma}^k|_{[-1.1]}:[-1,1]\rightarrow (\D^3,\xi_\std)$. Clearly, the Legendrian closure (Definition \ref{Closure}) of this $n$-sphere of tangles yields an actual $n$-sphere of Legendrian embeddings (in other words, it is an $n$-sphere of embeddings, each of which is in standard position). Henceforth, we will say that such an $n$-sphere of Legendrian embeddings is an \textbf{$n$-sphere of embeddings in standard position}.

\begin{remark}
    When thinking of such an $n$-sphere in the Lagrangian projection, it may be useful to have in mind the following picture. We can represent it as an $n$-sphere taking place within a (small) box (as in Figure \ref{LongToShort}), in which interior it coincides with the associated $n$-sphere of tangles, and outside of which it coincides with the (constant) standard Legendrian unknot $\beta$. 
\end{remark}

Let us show how this particular type of spheres fit in the parametric connected-sum construction. Take an $n$-sphere $\gamma_1(t,k)$ of Legendrian embeddings in standard position together with another $n$-sphere of Legendrian embeddings $\gamma_2(t,k)$. Think the embedding $\gamma_1\#\gamma_2$ as the closure of the concatenation of the two associated tangles (Proposition \ref{StandardModelPosition}), which  can be respectively thought as lying within two (small) boxes. Let us state the following key remark regarding the diagrammatic representation of $\mathcal{E}_1(t,k)$ in this particular case.

\begin{remark}
If $\gamma_1(t,k)$ is an $n$-sphere of Legendrian embeddings in standard position, then, clearly by construction, the $n$-sphere $\mathcal{E}_1(t,k)$ can be represented, up to reescaling accordingly as in Remark \ref{rmk:Reparametrizations}, as the original sphere $\gamma_1(t,k)$ of embeddings in standard position (taking place within a small box) concatenated with the (constant) $\gamma_2$-tangle lying within its own separated box not interacting with it. 
\end{remark}

In other words, we can think the $n$-sphere $\mathcal{E}_1(t,k)$ as the $n$-sphere of $\gamma_1$-tangles taking place within its box and, outside of that box,  the $n$-sphere just coincides with the (constant) $\gamma_2$-tangle and the respective closure. 
Furthermore, note that the same box containing the $\gamma_1$-tangle just moves rigidly within the sphere $\mathcal{E}_2(t,k)$ in the second part of the parametric-connected sum construction. In other words, the $\gamma_1$-tangle lies within its own box throughout the whole construction of $\gamma_1\#\gamma_2(t,k)$. Therefore, it is clear that it is not relevant in which order we perform both spheres $\mathcal{E}_1(t,k)$ and $\mathcal{E}_2(t,k)$; i.e. they clearly commute since they do not interact with each other and can be performed simultaneously. We encapsulate this fact in the following Remark.

\begin{remark}\label{RemarkSP}
    
By the considerations above, when one of the $n$-spheres involved in the parametric connected sum is of Legendrian embeddings in standard position, it readily follows that the $n$-spheres 
$\mathcal{E}_i(t,k)$ ($i=1,2$) can take place at any preferred order (or even simultaneuosly) in its diagrammatic representation. Indeed, since they do not interact with each other, all the possible orders yield isotopic $n$-spheres (this is the case since obvious isotopies can be defined). 
\end{remark}

\begin{remark}\label{ExampleLoopIntro}
    See, for an explicit example of the above kind, the parametric connected sum loop in Figure \ref{loopsMix}. The connected sum of two right-handed Legendrian trefoils (in blue and green) plays the role of the $\gamma_1$-tangle, whereas the black big trefoil corresponds to the $\gamma_2$-component.
    
    In this case, $\mathcal{E}_1(t,k)$ can be regarded as taking place in a small box (which we have not depicted explicitly) where these coloured small trefoils lie. Note that, since $\gamma_1(t,k)$ is a loop of Legendrians in standard position, we have depicted the parametric connected-sum as both spheres $\mathcal{E}_1(t,k)$ and $\mathcal{E}_2(t,k)$ taking place simultaneuosly (following Remark \ref{RemarkSP}). More precisely, $\gamma_1(t,k)$ is a ``Pulling one knot through the other loop'', which will be introduced in Subsection \ref{familia2}.
\end{remark}

\section{LCH Monodromy invariant}

In this Section we introduce the Chekanov-Eliashberg's Differential Graded Algebra (DGA) of a Legendrian \cite{Chekanov,EliashbergContactInvariants} and its associated homology, named Legendrian Contact Homology (LCH). Finally, we review the LCH monodromy invariant of a $1$-parameter family of Legendrians introduced by \kalman in \cite{kalman}. We will follow the exposition in \cite{Chekanov,kalman}. The reader may also consult \cite{EtnyreNg}.

\subsection{Chekanov--Eliashberg's DGA and LCH} 

Let $K\in\widehat{\Leg}$ be an embedded Legendrian knot with generic Lagrangian projection $K_L$. We define its associated DGA $\mathcal{A}_K$ over $\Z_2$, which will be enough for our purposes.  The generators of the algebra are Reeb chords with end points in $K$, that is, the set of crossings $C=\lbrace c_1,\cdots, c_k\rbrace$ of its Lagrangian projection $K_L$. Let us define the grading. Following \cite{kalman}, we say that a capping path of a crossing $c\in K_L$  is a path in $K_L$ starting at the undercrossing and whose endpoint is the uppercrossing of $c$. Moreover, asumme that all the crossings take place at right angles and, therefore, the rotation number of the capping path $r_c$ takes values $(2k+1)/4, k\in\Z$. Define the grading of $c$ as 
\[|c|=-2r_c-\frac{1}{2}\in\Z\]
and extend it in the usual way to the whole algebra $\mathcal{A}_K$.
\begin{remark}\label{rotationRemark}
We will work all along this article with Legendrians $K$ with zero rotation number $\Rot(K)=0$. Note that, in this case, the grading is independent of the choice of the chosen orientation for the capping path and it is thus well defined. 
\end{remark}
Define the Reeb sign associated to a quadrant at a crossing by the following convention: following the counter clockwise orientation of the plane, at each quadrant we have two branches (forming the corresponding right angle)  ordered by such orientation. If the first one is the upper branch and the second one the under branch, then we assign a positive $+$ sign to that quadrant. If they come in reverse order at that quadrant, we assign it a negative $-$ sign (see Figure \ref{Fig:SignConvention}).

\begin{figure}[h]
	\centering
	\includegraphics[width=0.19\textwidth]{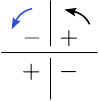}
	\caption{Sign convention for the quadrants at a crossing. Note that the quadrants where the first branch (following the order given by the counter clockwise orientation) is the upper branch and the second one is the under branch (black curved arrow) are equipped with a positive $+$ sign. The 
 other two quadrants where the branches come in opposite order (see the blue arrow) are equipped with a negative $-$ sign.\label{Fig:SignConvention}}
\end{figure}

%fix the ordered basis given by the tangent vector of the Legendrian at the upper crossing and the tangent vector at the under crossing. We say that the Reeb sign is positive if the orientation of this basis coincides with the standard one in $\R^2$ and it is negative otherwise. 

Fix crossings $p, c_1,\cdots, c_n$; denote by $\mathcal{P}(p;c_1,\cdots, c_n)$ the set of (non parametrized) $(n+1)$--polygons $P$ in $\R^2$ satisfying the following properties:

\begin{itemize}
\item[(i)] $P$ is immersed everywhere except at the ordered set of points $\mathcal{Q}=\{p, c_1,\cdots, c_n\}$.
\item[(ii)]  $\partial P\subset K_L$.
\item[(iii)]  The non immersed curve $\partial P$ fails to be immersed at the sequential set of points $\mathcal{Q}$. Moreover, it follows a positive (resp. negative) quadrant for $p$ (resp. for $c_j$).
\end{itemize}

Do note that $c_j$ and $c_{j'}$, for $j\neq j'$, may be the same crossing. The set of points $\{c_1,c_2,\ldots,c_n\}$ could be possibly empty.

Define the differential of a crossing $p$ via the formula:
\begin{equation}\label{differential}
\partial(p)=\sum_{n\in \N, (c_1,\cdots,c_n)\in C^n}\#\mathcal{P}(p;c_1,\cdots,c_n)c_1\cdots c_n
\end{equation}

and extend it to the rest of the words by the (graded) Leibniz rule. By convention the empty word is set to be $1$. Chekanov proved that the differential decreases the degree by $1$ \cite{Chekanov}.

\begin{remark}
    Note that, since we are working with $\mathbb{Z}_2$ coefficientes, we could potentially have different expressions for equivalent words. For example, $a+b+b=a$. We will, therefore, say that a word is \textbf{minimal} if it minimizes its length among all equivalent expressions. We will implicitly work with minimal expressions unless stated otherwise.
\end{remark}

\begin{figure}[h]
	\centering
	\includegraphics[width=0.3\textwidth]{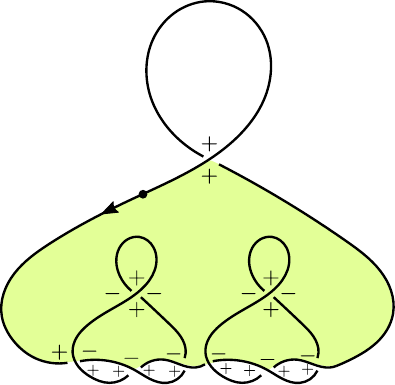}
	\caption{Example of a Legendrian connected sum of two trefoil knots with the described choice of signs for the crossings. In yellow, an example of one of the polygons contributing to the differential of the uppermost crossing in the diagram.\label{crossingsexample}}
\end{figure}

\begin{remark}\label{rmk:Heights}
	Define the height, or action, $h(c)>0$ to be the length of the Reeb chord $c$.
    
    If $P \in \mathcal{P}(p;c_1,\cdots,c_n)$, fix a parametrization of $P$ namely $\tilde{P}:\D^2\rightarrow\R^2$, then by Stokes' Theorem,
	\[
	h(p)-\sum_{i=1}^n h(c_i)= \int_{\D^2} \tilde{P}^*(dx\wedge dy) > 0.
	\]
	In particular, the differential $\partial$ decreases the height and, therefore, is well-defined. This last statement is encoded as a lemma since we will invoke it several times later on.
\end{remark}

\begin{lemma}\label{LemaAction}
If a crossing $x$ appears in the (minimal) expression of $\partial(y)$, then its action is smaller than the one of $y$; i.e. $h(x)<h(y)$. In particular, $y$ cannot appear in the expression of $\partial(x)$.    
\end{lemma}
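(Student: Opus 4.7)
The plan is to derive both statements directly from the area/action identity already observed in Remark \ref{rmk:Heights}, so essentially no new geometric input is needed beyond Stokes' theorem.

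First I would unpack what it means for $x$ to appear in the minimal expression of $\partial(y)$. By the definition of the differential in \eqref{differential}, $\partial(y)$ is a sum of monomials $c_1 \cdots c_n$, one for each polygon $P \in \mathcal{P}(y; c_1, \ldots, c_n)$. Cancellations over $\Z_2$ can only remove monomials that occur an even number of times, so if a monomial containing $x$ survives in the minimal expression, there must exist at least one polygon $P \in \mathcal{P}(y; c_1, \ldots, c_n)$ in which $x$ is one of the negative corners $c_i$. For any such polygon, fixing a parametrization $\tilde P : \D^2 \to \R^2$, Stokes' theorem gives
\[
h(y) - \sum_{j=1}^n h(c_j) \;=\; \int_{\D^2} \tilde{P}^*(dx \wedge dy) \;>\; 0,
\]
and since every Reeb chord height is strictly positive, I obtain $h(y) > \sum_j h(c_j) \geq h(x)$. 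This proves the first assertion $h(x) < h(y)$.

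The ``in particular'' clause is then a one-line consequence by contradiction. Suppose $y$ also appears in the minimal expression of $\partial(x)$. Applying the first part with the roles of $x$ and $y$ swapped yields $h(y) < h(x)$, which contradicts $h(x) < h(y)$. Hence $y$ cannot appear in $\partial(x)$.

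I do not expect a serious obstacle here; the only point requiring mild care is making sure that cancellations in the $\Z_2$-minimal expression do not remove every polygon witnessing the appearance of $x$. But since cancellation removes monomials in pairs, at least one contributing polygon survives whenever $x$ appears in the minimal expression, and that single polygon is enough to run the Stokes' theorem argument above.
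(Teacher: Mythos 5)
Your proposal is correct and follows exactly the argument the paper itself uses: the lemma is simply the encapsulation of the Stokes' theorem computation in Remark \ref{rmk:Heights}, giving $h(y) > \sum_j h(c_j) \geq h(x)$, with the ``in particular'' clause following by the same symmetry/contradiction you describe. Your additional care about $\Z_2$-cancellations not destroying every witnessing polygon is a reasonable (if minor) point that the paper leaves implicit.
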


\begin{theorem}[Chekanov \cite{Chekanov}]\label{thm:LegendrianContactHomologyWellDefined}
The pair $(\mathcal{A}_K,\partial)$ is a DGA. The homology $H_*(K)$ of the DGA, known as Legendrian contact homology (LCH) of $K$, is an invariant of the Legendrian isotopy class of $K$. 
\end{theorem}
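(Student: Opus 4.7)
The plan is to establish two facts: (a) $\partial^2 = 0$, so that $(\mathcal{A}_K,\partial)$ is a genuine DGA; and (b) the quasi-isomorphism class of $(\mathcal{A}_K,\partial)$ depends only on the Legendrian isotopy class of $K$. For (a), I would first verify that $\partial$ lowers grading by $1$ by a direct local computation: the rotation number contributions of the capping paths of the input and output crossings of any polygon $P\in\mathcal{P}(p;c_1,\dots,c_n)$ must balance modulo the total turning of $\partial P$, which is $2\pi$ (since $P$ is immersed in the interior and its corners contribute prescribed fractional turns via the Reeb sign convention). This reduces to a bookkeeping of the $\pm 1/4$ rotations at each corner.

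For $\partial^2=0$ itself, I would analyze the coefficient of a monomial $w = d_1\cdots d_m$ in $\partial^2(p)$. Each contribution arises from a pair $(P_1,P_2)$ of polygons: $P_1 \in \mathcal{P}(p;c_1,\dots,c_n)$ and $P_2\in \mathcal{P}(c_i; e_1,\dots,e_k)$ for some $i$, with the $e_j$'s replacing $c_i$ to yield $w$. The strategy is to interpret such a pair as a broken configuration and to view it as a boundary point of a 1-dimensional moduli space obtained by deforming $P_2$'s negative corner at $c_i$ to glue it to $P_1$'s positive corner at $c_i$ along the portion of $K_L$ between them. Running the gluing parameter, the other boundary of the moduli consists of a different broken configuration, namely a polygon $P_1'\in\mathcal{P}(p;c_1',\dots)$ coupled with $P_2'\in\mathcal{P}(c_j';\dots)$, yielding the same output word $w$. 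This pairs up contributions to $\partial^2(p)$ and they cancel modulo $2$. Concretely, the combinatorial implementation is Chekanov's argument using the admissible ``decomposition of immersed disks'' at a crossing; the analytic implementation is the standard boundary analysis of the moduli space of pseudoholomorphic disks in the symplectization (Ekholm--Etnyre--Sullivan).

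For (b), I would use the theorem that two Legendrian diagrams in the Lagrangian projection represent Legendrian isotopic knots if and only if they are related by planar isotopy together with finitely many Legendrian Reidemeister moves of types I (triple-point), II (adding/removing a pair of crossings bounding a small monogon/bigon) and III (triangle). For each move, I would construct an explicit stable tame isomorphism between the two associated DGAs:
\begin{itemize}
    \item Type II introduces a contractible pair of generators $a,b$ with $\partial a = b + (\text{higher order})$; the new DGA is tamely isomorphic to the stabilization of the old one, which has the same homology by a standard algebraic cancellation lemma.
    \item Type III induces a tame automorphism of the algebra: the three crossings of the triangle persist, and one checks that the change in the count of polygons through the triangular region corresponds to conjugation by an elementary automorphism.
    \item Type I is handled by a local computation showing that no new polygons appear and that the generator counts match.
\end{itemize}
Stable tame isomorphisms induce isomorphisms on homology, yielding the claimed invariance.

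The most delicate step, as in Chekanov's original paper, will be the pairing argument for $\partial^2 = 0$: one must guarantee that the gluing/degeneration of broken polygons is a well-defined involution on broken configurations with the correct output, and that no additional boundary phenomena occur (for instance, boundary configurations coming from a corner of $P_1$ or $P_2$ sliding off or from coincidence of Reeb chords). The grading and action constraints from Remark~\ref{rmk:Heights} ensure that only finitely many polygons contribute to any fixed coefficient, but verifying that the bijection is well defined requires a careful case analysis of how the two polygons can be glued at their shared corner.
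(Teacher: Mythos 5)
This statement is quoted from Chekanov and the paper offers no proof of its own; it only records (after the theorem) that Chekanov's argument is combinatorial and proceeds by showing that the DGAs of two Lagrangian projections differing by a Reidemeister move are stable tame isomorphic. Your outline follows exactly that strategy -- $\partial^2=0$ by pairing broken polygon configurations via gluing/degeneration, and invariance via stable tame isomorphisms attached to the elementary moves -- so at the level of approach you are reproducing the standard proof the paper cites.

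Two points in your sketch are off and would need repair in a complete write-up. First, your taxonomy of moves is wrong for the Lagrangian projection: there is no analogue of the Reidemeister I (kink) move there, since adding or removing a kink changes the rotation number and the number of generators; the relevant moves are only $\mathcal{R}\mathrm{-II}$/$\mathcal{R}\mathrm{-II}^{-1}$ and the two flavors $\mathcal{R}\mathrm{-III}_a$, $\mathcal{R}\mathrm{-III}_b$ of the triangle (triple-point) move. Your ``type I (triple-point)'' and ``type III (triangle)'' are the same move listed twice, and the case you describe as ``type I'' with matching generator counts is in fact the III move. Second, and more substantively, the invariance argument cannot start from the assertion that Legendrian isotopy corresponds to arbitrary planar Reidemeister moves of the diagrams: Lagrangian projections of Legendrian knots satisfy area constraints (the total signed area vanishes), and a diagrammatic $\mathrm{II}^{-1}$ move is realizable by a Legendrian isotopy only when an area inequality holds (the paper itself invokes this admissibility condition, citing K\'alm\'an's Theorem 4.1, in the proof of Proposition \ref{StandardModelPosition}). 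Chekanov's actual proof handles this by working with admissible decompositions and perturbations; without addressing it, the reduction of invariance to a finite list of local moves is incomplete. The rest of your sketch (the contractible pair $\partial a = b + \cdots$ for II moves, the elementary automorphism $x\mapsto x+zy$ for $\mathrm{III}_b$, and the pairing of boundary strata for $\partial^2=0$) is consistent with Theorem \ref{theoremChekanov} and with the combinatorial proof.
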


Theorem \ref{thm:LegendrianContactHomologyWellDefined} was proved by Chekanov in a combinatorial way: he showed that the the DGAs $(\mathcal{A}_K,\partial)$ and $(\mathcal{A}_{K'},\partial')$ of two Legendrians $K$ and $K'$ that differ by a Reidemeister move in the Lagrangian projection move are stable tame isomorphic and, therefore, they have isomorphic LCH. To introduce and compute \kalmans monodromy invariant, we will make use of such isomorphisms.

%\begin{remark}\label{rmk:LegendrianContactHomologyWellDefined}
%The differential $\partial$ is well defined since this sum has a finite number of non--vanishing elements, because of the previous property. Moreover, it has degree $-1$ and its square is zero. The algebra $\mathcal{A}_K$ equipped with the boundary operator $\partial$ is known as Chekanov--Eliashberg DGA of the Legendrian  knot $K$. This DGA gives rise to the Chekanov-Eliashberg Legendrian Contact homology $H_*(K)=\ker (\delta)/\im(\delta)$ of a Legendrian $K$, that we will show to depend just on the (Legendrian) isotopy class of the Legendrian \cite{Chekanov}.
%\end{remark}

\subsection{Invariants for one-parameter families of Legendrians.}

Let $K^t$, $t\in[0,1]$, be a generic path of Legendrians. Four essentially different Legendrian Reidemeister moves ($\mathcal{R}-\text{II}$, $\mathcal{R}-\text{II}^{-1}$, $\mathcal{R}-\text{III}_a$, $\mathcal{R}-\text{III}_b$) may take place in the Lagrangian projection at finitely many times $\lbrace t_0,\cdots, t_k\rbrace$ (see Figure \ref{reidemeisters}), and no other bifurcation takes place for different values of the parameter $t$. 

Denote by $\left(\mathcal{A}_K, \partial\right)$ the differential algebra associated to a Legendrian $K=K^{t_i-\varepsilon}$ before one of the Reidemeister moves takes place and by $\left(\mathcal{A}_{K'}, \partial'\right)$ the one associated to a Legendrian $K'=K^{t_i+\varepsilon}$ just after such a Reidemeister move. Then, the Lagrangian projections $K_L$ and $K'_L$ only differ where the Reidemeister move has taken place. Therefore, there is a relabeling $p\mapsto p'\in\mathcal{A}_{K'}$ for all generators $p\in\mathcal{A}_K$ of the algebra not affected by the move. For the ones affected by the Reidemeister move we have to name the old and the new generators.

The notation is as follows. In the $\mathcal{R}-\text{II}$ moves the pair of generators that appear have opposite signs in the bounded region (see the first box in Figure \ref{reidemeisters}), the positive one is labeled as $x$ and the other one as $y$. For the $\mathcal{R}-\text{III}$ moves denote by $z$ the generator that remains fixed under the movement and by $x$ and $y$ the two generators of the moving branch. Moreover, $x$ is chosen to be one that has negative sign within the bounded region and $y$ the onw with positive sign. See Figure \ref{reidemeisters}.

As shown by Chekanov \cite{Chekanov}, associated to the bifurcation there is a holonomy homomorphism $$ g:(\mathcal{A}_K,\partial)\rightarrow (\mathcal{A}_{K'},\partial'),$$ which induces an isomorphism on the homology level. The description of the holonomy is the content of the following Theorem.

\begin{figure}[h]
	\centering
	\includegraphics[width=1\textwidth]{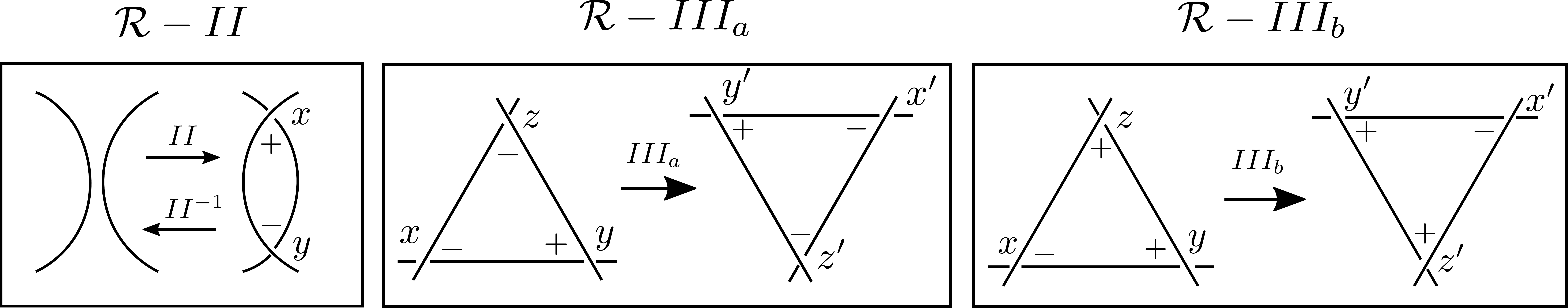}

\caption{Different Reidemeister moves that could potentially take place in the Lagrangian projection of a generic path of Legendrian embeddings.}\label{reidemeisters}
\end{figure}

\begin{theorem}[\cite{Chekanov}, \cite{kalman}]\label{theoremChekanov} %Let $K_t$, $t\in[0,1]$ be a generic path of Legendrians. Denote by $\lbrace t_0,\cdots, t_k\rbrace$ the times where some Legendrian Reidemeister move takes place in the Lagrangian projection of the path. The following algebra homomorphisms, called holonomy maps, $g:\mathcal{A}_{K_{t_i-\varepsilon}}\rightarrow\mathcal{A}_{K_{t_i+\varepsilon}}$ (described below for generators depending on the different Reidemeister move taking place at $t_i$)  induce isomorphisms at the homology-level.
Name the generators of $\mathcal{A}_K$ as follows ordered by their action/height:
\begin{equation}\label{heightsOrder} h(x_q)\geq h(x_{q-1})\geq\cdots\geq h(x) > h(y) \geq h(y_1) \geq h(y_2) \geq \cdots \geq h(y_\ell)\end{equation}
and write $\partial(x)=y+w$. The holonomy homomorphism $g$ is described, depending on the type of bifurcation, as follows:

\scalebox{0.7}{$\blacksquare$}  $\mathcal{R}-\text{II}$. If the two crossings appearing are $x$ and $y$ (and thus we have the crossings ordered by height as in (\ref{heightsOrder})), then the holonomy map  satisfies that $g(y_j)=y'_j$ and is defined recursively for the other generators $x_1,\cdots, x_q$ by a formula described in \cite[pp. 2038--2039]{kalman} that we will not use in this article. In practice, we will, instead, make use of the more tractable description provided by Proposition \ref{holonomiaTipoDos} below in order to deal with  $\mathcal{R}-\text{II}$ moves.

\scalebox{0.7}{$\blacksquare$}   $\mathcal{R}-\text{II}^{-1}$: $g(x)=0, g(y)=w'$; and  $g(p)=p'$ for any other $p\in\mathcal{A}_K$.

\scalebox{0.7}{$\blacksquare$}  $\mathcal{R}-\text{III}_a$: $g(p)=p'$ for any $p\in\mathcal{A}_K$.

\scalebox{0.7}{$\blacksquare$} $\mathcal{R}-\text{III}_b$: $g(x)=x'+z'y'$ and $g(p)=p'$ for any other generator $p\in\mathcal{A}_K$.

\end{theorem}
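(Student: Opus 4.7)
The plan is to prove, for each of the four Reidemeister moves, that the proposed map $g$ is a chain map which induces an isomorphism on homology. Following Chekanov's original approach \cite{Chekanov}, I would aim to exhibit each $g$ as a \emph{stable tame isomorphism} (or a composition thereof) between $(\mathcal{A}_K,\partial)$ and $(\mathcal{A}_{K'},\partial')$; this automatically yields the LCH isomorphism required by Theorem \ref{thm:LegendrianContactHomologyWellDefined}. In every case the verification of $g\circ\partial=\partial'\circ g$ on generators proceeds by comparing, for each crossing $p$, the polygons $P\in\mathcal{P}(p;c_1,\ldots,c_n)$ that contribute to $\partial p$ with those contributing to $\partial' g(p)$; the general principle is that polygons whose boundary avoids the small region where the bifurcation takes place are in natural bijection on both sides, so all the content lies in analyzing polygons that enter that region.

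For the $\mathcal{R}\text{-III}_a$ move I would check that under the local interchange of strands the Reeb signs at the three crossings involved are arranged so that no polygon acquires or loses a convex positive/negative corner: consequently the polygon count at every generator is preserved, giving $g(p)=p'$ directly. For $\mathcal{R}\text{-III}_b$, by contrast, a new triangle bounded by $z$, $x$, $y$ with matching sign pattern is created as the branch sweeps across; every polygon using the $x$-corner in the ``before'' picture can now either still use $x'$ or instead fall into the new triangle and pick up the factor $z'y'$, so the differential changes by exactly the correction $g(x)=x'+z'y'$. In both situations I would construct the inverse explicitly (the identity, respectively $x'\mapsto x+zy$), which makes tameness manifest.

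For the $\mathcal{R}\text{-II}^{-1}$ move a canceling pair with $\partial x=y+w$ disappears. To verify $g$ is a chain map on a generator $p\notin\{x,y\}$, I would identify each term of $\partial p$ mentioning $x$ as coming from a polygon that enters the bigon and must exit through either the $y$-corner or by continuing around; after the move these polygons either close up (contributing $w'$, in agreement with $g(y)=w'$) or are cut off (contributing nothing, in agreement with $g(x)=0$). Lemma \ref{LemaAction} guarantees $h(w)<h(x)$, so there is no circular dependence, and after an upper-triangular change of generators $g$ destabilizes off the pair $(x,y)$, exhibiting tameness.

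The $\mathcal{R}\text{-II}$ move is the inverse situation and is where I expect the main difficulty. A new pair $(x,y)$ with $h(x)>h(y)$ is introduced, and for each generator $x_i$ of height larger than $h(x)$ new polygons that pass through the newly created bigon can appear in $\partial' x_i'$. The recursive formula of \cite[pp.~2038--2039]{kalman} defines $g(x_j)$ in decreasing action, correcting at each stage for the new polygons crossing the bigon; the ordering (\ref{heightsOrder}) is essential so that each corrective term $g(x_i)$ for $i<j$ is already defined when needed. The critical input is Lemma \ref{LemaAction}, applied via the Stokes argument of Remark \ref{rmk:Heights}: every new polygon entering the bigon strictly decreases action when broken at $x$ or $y$, so the recursion terminates and produces a well-defined ring homomorphism. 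The hard part will be the bookkeeping verification that this prescription is genuinely a chain map (i.e.\ that every term of $\partial' g(x_j)$ is accounted for by the $g(\partial x_j)$-side after the corrections) and that the resulting $g$ is tame after stabilizing by $(x,y)$; these are the technical steps that I would carry out carefully by stratifying polygons according to how many times their boundary traverses the new bigon region.
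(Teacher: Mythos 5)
The paper does not prove this theorem: it is quoted from Chekanov and K\'alm\'an, so there is no in-paper argument to compare yours against. Your outline is a faithful reconstruction of the strategy those sources actually use --- stratify the immersed polygons contributing to $\partial$ according to whether their boundary enters the local bifurcation region, handle the two triangle moves by a corner/sign analysis (trivial holonomy for $\mathcal{R}\text{-III}_a$, the correction $x\mapsto x'+z'y'$ for $\mathcal{R}\text{-III}_b$), cancel the acyclic pair $(x,y)$ with $\partial x=y+w$ for $\mathcal{R}\text{-II}^{-1}$, and use the action filtration of Lemma \ref{LemaAction} to ground the recursive definition of $g$ on the high-action generators for $\mathcal{R}\text{-II}$. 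Two caveats. First, what you have is a proof plan rather than a proof: in the $\mathcal{R}\text{-II}$ case the entire mathematical content --- the recursion of \cite[pp.~2038--2039]{kalman} together with the verification that it is a chain map and a tame isomorphism after one stabilization --- is precisely the ``bookkeeping'' you defer, and it occupies several pages of K\'alm\'an's paper. The theorem as stated here likewise declines to give that formula, so for the purposes of this article only the claim $g(y_j)=y_j'$ requires justification, and your action-filtration argument does supply it. Second, a small internal inconsistency: with the labelling of (\ref{heightsOrder}) the recursion must run in \emph{increasing} action (define $g(x_1)$ first, then $g(x_2)$, and so on), since the corrective terms for $g(x_j)$ involve only generators of strictly smaller action; you write ``decreasing action'' but then correctly invoke that $g(x_i)$ for $i<j$ is already defined, so this is a wording slip rather than a logical error.
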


%Rather than working with the involved formula described for the holonomy of the $\mathcal{R}-\text{II}$ move, we will in practice use the following proposition.

\begin{proposition}[\cite{kalman}, Prop. 3.5.]\label{holonomiaTipoDos} 

Let $p\in\mathcal{A}_K$ be a generator such that, after the holonomy of a Reidemeister$-II$ move, it becomes $p'\in\mathcal{A}_{K'}$ and, moreover, $\partial(p')=0$. Then the holonomy $g:\mathcal{A}_K\rightarrow \mathcal{A}_{K'}$ acts as the identity on $p$; i.e. $g(p)=p'$.

%Let  $g:\mathcal{A}_K\rightarrow \mathcal{A}_{K'}$ be the holonomy of a Reidemeister$-II$ move and   after the move, its corresponding $p'\in\mathcal{A}_{K'}$  satisfies $\partial(p')=0$. Then the holonomy $g:\mathcal{A}_K\rightarrow \mathcal{A}_{K'}$ corresponding to a $\mathcal{R}-II$ move acts as the identity on $p$; i.e. $g(p)=p'$.
\end{proposition}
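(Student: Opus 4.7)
The strategy is to exploit the recursive description of the Reidemeister-II holonomy homomorphism laid out in \cite[pp. 2038--2039]{kalman} and argue that every correction term beyond the naive relabeling $p \mapsto p'$ is forced to vanish when $\partial(p') = 0$. Recall that \kalmans recursion is set up inductively on the generators of $\mathcal{A}_K$, ordered by decreasing action as in (\ref{heightsOrder}), with $g(y_j) = y'_j$ forming the base case and the image of each higher generator $p$ given by
\[ g(p) \;=\; p' \;+\; R(p), \]
where $R(p)$ is a sum of monomials obtained geometrically by examining how polygons in $K'_L$ which bound $p'$ and meet the freshly created crossing $x'$ can be ``split'' and completed using the Reeb chord $y'$. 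Each generator appearing inside $R(p)$ has strictly smaller action than $p$ by Lemma \ref{LemaAction}, so $g$ is already determined on such generators by the inductive hypothesis, and the base case is trivial because generators of action below $h(y)$ lie outside the support of the Reidemeister-II move and are mapped identically by construction.

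The key step I then plan to carry out is to identify the terms in $R(p)$ with monomials of $\partial(p')$ that factor through $x'$. Geometrically, a polygon in $K'_L$ bounding $p'$ and visiting a negative quadrant at $x'$ splits into two pieces: one of them is, up to the substitution dictated by $\partial x' = y' + \cdots$, a polygon contributing to $\partial(p')$, while the other encodes the $y'$-decoration appearing in \kalmans formula. Matching these two combinatorial encodings shows that $R(p)$ coincides with the portion of $\partial(p')$ that passes through $x'$, after applying $g$ on the generators of smaller action; since by hypothesis $\partial(p') = 0$, this forces $R(p) = 0$ and hence $g(p) = p'$.

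The main obstacle is precisely this geometric/combinatorial identification: one must verify that no polygon counted by $R(p)$ is missed by or cancels in $\partial(p')$, and that the inductive $g$-images of the smaller generators recombine coherently with the monomial structure of $\partial(p')$ (rather than contributing a residual term). Once that bookkeeping is carried out, the induction on height closes cleanly and the proposition follows. The bookkeeping is made tractable by the strict decrease of action under $\partial$ from Lemma \ref{LemaAction}, which guarantees that the inductive substitution terminates and that no circularity arises in comparing $R(p)$ with $\partial(p')$.
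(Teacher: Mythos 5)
First, a point of comparison: the paper does not prove this proposition at all — it is imported verbatim as \cite[Prop.~3.5]{kalman} — so the only argument yours can be measured against is Kálmán's. Your outline follows the same route he does: induct on the generators ordered by action, take the identity on the low-action generators $y_j$ as the base case, and show that the correction term $R(p)$ in the recursive formula for the type-II holonomy is governed by the monomials of $\partial(p')$ that pass through the newly created crossing, hence vanishes when $\partial(p')=0$. The strategy is the right one, and essentially the only one.

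However, as written the proposal has a genuine gap, which you yourself flag: the identification of $R(p)$ with (the $g$-image of) the part of $\partial(p')$ factoring through $x'$ \emph{is} the content of the proposition, and you defer it as ``the main obstacle'' rather than carrying it out. Until you write down Kálmán's recursive formula from \cite[pp.~2038--2039]{kalman} explicitly and check that each of its correction summands is labelled by an occurrence of the new crossing in a monomial of $\partial(p')$, the argument does not close. Two of your stated worries are in fact easy to dispose of: over $\mathbb{Z}_2$ a monomial containing $x'$ can only cancel against an identical monomial containing $x'$, so $\partial(p')=0$ already forces the sub-sum of such monomials to vanish, and the image of that zero sub-sum under any substitution is zero; and the recursion is non-circular and terminates by the action estimate of Lemma \ref{LemaAction}, exactly as you say. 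What remains missing is the actual term-by-term matching between the recursive formula and the polygons counted by $\partial(p')$; since the statement is anyway a citation, the honest fix is either to quote Kálmán's formula and make that matching explicit, or simply to invoke his Proposition 3.5 as the paper does.
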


Given a generic loop $K^\theta\in\widehat{\Leg}$, $\theta\in\NS^1$, based at $K=K^N$, there is a finite number of bifurcations that appear. The composition of all the holonomy homomorphisms give rise to a stable tame isomorphism $g(K^\theta):(\mathcal{A}_K,\partial)\rightarrow(\mathcal{A}_K,\partial)$ which defines an element $\mu(K^\theta)\in \operatorname{Aut}(H_*(K))$,
known as the \textbf{ LCH monodromy} of the loop. This is an invariant of the homotopy class  of the loop, as the following Theorem states.

\begin{theorem}[\kalman \cite{kalman}]\label{MonodromyInvariant}
Let $K\in\widehat{\Leg}$ be a generic Legendrian. The monodromy map is a group homomorphism
\[\mu:\pi_1(\widehat{\Leg},K)\to \operatorname{Aut}(H_*(K)).\] 

\end{theorem}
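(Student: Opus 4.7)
The plan is to prove Theorem \ref{MonodromyInvariant} in two steps: first establish that $\mu$ is well defined on homotopy classes, then verify the homomorphism property.

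First I would recall that by a standard transversality argument any loop $K^\theta\in\widehat{\Leg}$, $\theta\in\NS^1$, based at $K$ can be made generic so that its Lagrangian projection admits only the four Reidemeister bifurcations of Figure \ref{reidemeisters}, each occurring at a distinct time $t_i\in\NS^1$. Theorem \ref{theoremChekanov} then associates to each bifurcation a holonomy homomorphism between the DGAs before and after, and each such homomorphism is a stable tame isomorphism in the sense of Chekanov. The composition over $i$ is therefore a stable tame self-isomorphism $g(K^\theta):(\mathcal{A}_K,\partial)\to(\mathcal{A}_K,\partial)$, which descends to a well defined automorphism $\mu(K^\theta)\in\operatorname{Aut}(H_*(K))$ by Theorem \ref{thm:LegendrianContactHomologyWellDefined}.

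The second step, which is the heart of the theorem, is to show that $\mu(K^\theta)$ depends only on the homotopy class of $K^\theta$ in $\pi_1(\widehat{\Leg},K)$. Given a homotopy $K^{\theta,s}$, $s\in[0,1]$, between two generic based loops, I would again apply transversality to the two-parameter family: generically the diagram passes through finitely many codimension-two events, at each of which the holonomy sequence changes in one of a small number of prescribed ways. The codimension-two events to be analysed are (a) two Reidemeister moves happening at two disjoint places on the diagram simultaneously, which trivially commute since the corresponding local holonomies act on disjoint sets of generators; (b) the ``birth-death'' event in which a $\mathcal{R}-\text{II}$ and a $\mathcal{R}-\text{II}^{-1}$ move cancel, for which one must check, using the explicit formulas of Theorem \ref{theoremChekanov}, that the composition $g_{\mathcal{R}-\text{II}^{-1}}\circ g_{\mathcal{R}-\text{II}}$ agrees with the identity modulo the differential, i.e.\ up to stable tame isomorphism; and (c) degenerations of $\mathcal{R}-\text{III}$ moves and their interaction with $\mathcal{R}-\text{II}$ moves (such as a triple-point tangency or a crossing of a double-point curve with the $\mathcal{R}-\text{III}$ stratum). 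At each such event one checks by direct computation with the formulas of Theorem \ref{theoremChekanov} and Proposition \ref{holonomiaTipoDos}, crucially using the action filtration (Lemma \ref{LemaAction}) to rule out spurious polygons, that the two possible orderings yield stable-tame-isomorphic compositions and hence the same induced automorphism on $H_*(K)$.

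The homomorphism property then follows formally. If $K^\theta_1$ and $K^\theta_2$ are two loops based at $K$, their concatenation $K^\theta_1\cdot K^\theta_2$ is (after a small perturbation) a generic loop whose Reidemeister bifurcations are exactly the concatenation of those of $K^\theta_1$ and $K^\theta_2$. Hence $g(K^\theta_1\cdot K^\theta_2)=g(K^\theta_2)\circ g(K^\theta_1)$ as stable tame isomorphisms, and passing to homology gives $\mu(K^\theta_1\cdot K^\theta_2)=\mu(K^\theta_2)\circ \mu(K^\theta_1)$, confirming that $\mu$ is a group homomorphism.

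The main obstacle is the codimension-two analysis in the well-definedness step, especially the invariance under the birth-death of a $\mathcal{R}-\text{II}/\mathcal{R}-\text{II}^{-1}$ pair, since the formula for the $\mathcal{R}-\text{II}$ holonomy is implicit/recursive. I would handle this by working one action-level at a time: Lemma \ref{LemaAction} guarantees that the relevant generators $x,y$ introduced by the $\mathcal{R}-\text{II}$ move sit at a controlled height, so after composing with the $\mathcal{R}-\text{II}^{-1}$ map (which kills $x$ and sends $y\mapsto w'$ with $\partial(x)=y+w$) one recovers the identity up to a tame change of generators. All remaining codimension-two checks are local in the diagram and reduce to finite case analyses already present in \kalmans original work, which I would invoke rather than reprove.
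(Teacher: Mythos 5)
This theorem is stated in the paper as a citation of \kalman \cite{kalman} and is not proved there, so there is no in-paper argument to compare against; your outline does, however, faithfully reproduce the architecture of \kalmans original proof: compose the holonomies of Theorem \ref{theoremChekanov} along a generic loop, check invariance under the codimension-two bifurcations of a generic two-parameter family of Lagrangian projections, and deduce the homomorphism property from concatenation of bifurcation sequences. Two caveats. First, essentially all of the mathematical content lives in the codimension-two analysis you defer: the list of strata is longer than the three items you name (degenerate self-tangencies, quadruple points, triple points with a tangent pair, a self-tangency lying on a third branch, etc.), and in the Lagrangian projection each move is additionally constrained by area/action conditions, so ``invoking rather than reproving'' \kalmans case analysis is legitimate only because the theorem is his; as a self-contained proof the proposal would be incomplete. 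Second, your formula $\mu(K^\theta_1\cdot K^\theta_2)=\mu(K^\theta_2)\circ\mu(K^\theta_1)$ as written gives an anti-homomorphism into $\operatorname{Aut}(H_*(K))$; one must fix the concatenation or composition convention so that the statement of the theorem (a homomorphism) comes out right. Neither point is a gap in the sense of a wrong idea, but both should be made explicit.
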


\section{Parametric connected-sums and the LCH Monodromy invariant}\label{argumentomonodromia}
In the first part of the Section we will take a close  look to DGA of positive $(p,q)$-torus knots with maximal Thurston-Bennequin invariant. We will also introduce a new class of Legendrians, called knots of even $\partial$-class, that will play a crucial role in our construction. 

Then, we will prove a key result (Lemma \ref{BarridosNoAfectan}) about the monodromy of a ``fly'' in a connected sum loop. After reviewing \kalmans examples \cite{kalman}, we will apply the Elephant-Fly construction to prove Theorem \ref{thm:KalmanSumNonTrivialA} which will be the key to produce new infinite families of loops with non-trivial monodromy invariant in Section \ref{SecExamples}, obtained by considering parametric connected sums of loops based at Legendrians of even $\partial$-class with \kalmans loop of right-handed trefoils. 

When referring to loops of Legendrian embeddings, we will use $t\in\NS^1$ to denote the parameter in the domain of the embedding, and $\theta\in \NS^1$ for the parameter of the loop.

\subsection{Legendrian $(p,q)$-tangles and associated words.}\label{pqRealization}

Positive Legendrian $(p,q)$-torus knots were classified by Etnyre and Honda in \cite{EtnyreHondaTorus}. There is a unique Legendrian $(p,q)$-torus knot with maximal Thurston-Bennequin invariant $\tb=pq-p-q$ and rotation number $\rot=0$. We will refer to the latter as the Legendrian $(p,q)$-torus knot without specifying the condition about the $\tb$ number. We will follow \kalmans convention \cite{kalman} to refer to $(p,q)$-torus knots meaning that the knot wraps $p$ times around the meridian and $q$ times around the Seifert longitude of an unknot. 

A diagrammatic realization of the Legendrian $(p,q)-$torus knot in the Lagrangian projection consists of the following construction. Take the usual $(p,q)-$braid and close it by joining the $i$-th strand at the left side of the braid with the $i$-th strand at the right side of the braid via a closed unknotted curve with a kink (for every $1\leq i\leq q$) as in Figure \ref{torusknot} (A). Note that the total (signed) area enclosed by the Lagrangian projection must be $0$ but for practical reasons we will not take this into consideration in our Figures. We will denote this Legendrian $(p,q)$-torus knot as $K_{p,q}$. We denote by $\gamma_{p,q}$  a parametrization of $K_{p,q}$ that we assume to be in standard position.

The associated Legendrian $\gamma_{p,q}$-tangle is obtained by opening the Legendrian $\gamma_{p,q}$ at a point $Q$ in the curve joining the points in the $q-$th position both at left and right side of the Legendrian $(p,q)-$braid in the knot. We will refer to the $\gamma_{p,q}$-tangle as Legendrian \textbf{$(p,q)$-tangle}. See Figure \ref{torusknot} for an example of a $(4,3)$-torus knot (A) and its corresponding $(4,3)$-tangle (B). Sometimes, we will make a slight abuse of notation and talk about embedded tangles and embedded Legendrians in standard position (referring to the lifts of the so-called diagrammatic representations). 

%exactly as in \cite[Def 6.1]{kalman} or as in Figure \ref{torusknot}. Kálmán calls this construction the \textbf{Legendrian closure} of the braid \cite[p. 2053]{kalman}. See Figure \ref{torusknot} (A). The enclosed $0-$area condition must be satisfied when performing this diagrammatic realization but for practical reasons we will not take this into consideration in our Figures.

%\begin{remark}
%By the work of J. Etnyre and K. Honda \cite{EtnyreHonda}, Legendrian positive torus knots are classified by their formal invariants and so every (non-stabilized) Legendrian embedding of such type is isotopic to the model we have just described. 
%\end{remark}

%\begin{remark}
%The diagrammatic representation we just described can be understood as a Legendrian embedding in standard position (recall Definition \ref{StandardPosition}), where the $\gamma$-tangle in this case corresponds to the $(p,q)$-braid. We often call it the $(p,q)$-tangle.
%\end{remark}

%\begin{definition}
%We define a Legendrian $(p,q)-$tangle as a tangle consisting on opening a Legendrian $(p,q)-$torus knot at a point $Q$ in the curve joining the points in the $q-$th position both at left and right side of the Legendrian $(p,q)-$braid in the knot.
%\end{definition}
%See Figure \ref{torusknot} for an example of a $(4,3)$-torus knot (A) and its corresponding $(4,3)$-tangle (B).
%\begin{remark}
%Any election of point $Q$ gives raise, up to Legendrian homotopy, to the same tangle diagram and, therefore, we can speak about the $(p,q)-$tangle with no ambiguity.  
%\end{remark}
\begin{figure}[h]
		\centering
		\subfloat[$(4,3)$-torus knot and a choice of  point $Q$ in the curve.% joining the points in the $3-$th position both at left and right side of the Legendrian $(4,3)-$braid in the knot
  ]{
			\label{torusknot}
			\includegraphics[width=0.45\textwidth]{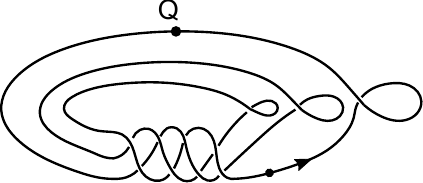}}
		\subfloat[Legendrian $(4,3)$-torus tangle.]{
			\label{torustangle}
			\includegraphics[width=0.45\textwidth]{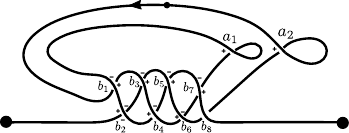}}
		\caption{$(4,3)$-torus knot and its associated tangle. }\label{torusknot}
\end{figure}

The Lagrangian projection of a Legendrian $(p,q)$-torus knot as above has degree $0$ crossings $b_1,\ldots, b_{p(q-1)}$ coming from crossing of the $(p,q)$-braid; and degree $1$ crossing $a_1,\ldots,a_q$ corresponding to the kinks. Following the notation from \cite{kalman}, degree $0$ crossings are ordered from left to right in the Lagrangian projection and degree $1$ crossings are ordered starting from the innermost crossing in the diagram until the outermost one.%.where $C$ denotes the set of crossings in the Legendrian $(p,q)$--braid inside the tangle. All these crossings have degree $0$ in the DGA \cite[p. 2053]{kalman}. Moreover, all the crossings corresponding to the $q$ upper loops have degree $1$ \cite[p. 2053]{kalman}. 

%\begin{remark} Following the notation from \cite{kalman}, we denote by $a_i$ the degree-$1$ crossings and by $b_j$ the degree-$0$ ones. Degree-$1$ crossings are ordered starting from the innermost crossing in the diagram until the outermost one. Degree-$0$ crossings are ordered from left to right in the Lagrangian projection (see the right picture in Figure \ref{torusknot}).
%\end{remark}

Note that the closure of a Legendrian tangle $K$ adds a new degree $1$ crossing $a$ to the Lagrangian projection (Figure \ref{LongToShort}). We refer to such $a$ as the \textbf{closure crossing}.

\begin{definition}\label{AssociatedWord}
 The word associated to the tangle $K$ is the differential $W_K:=\partial(a)-1$ %(recall Eq. \ref{differential}) of the closure crossing $a$ associated to the $K$-tangle, which results from taking the aforementioned canonical Legendrian closure: $W_K:=\partial(a)$.
\end{definition}

\begin{remark}
    This word will naturally appear when we consider additional tangles attached to given Legendrian embeddings. The differentials of certain crossings in the resulting new embeddings will be related to the original ones by means of these associated words. Let us justify the appearance of the $-1$ in this definition. It corresponds to the lobe that appears when one considers the closure of a tangle.
\end{remark}

%\begin{figure}[h]
%	\centering
%	\includegraphics[width=0.4\textwidth]{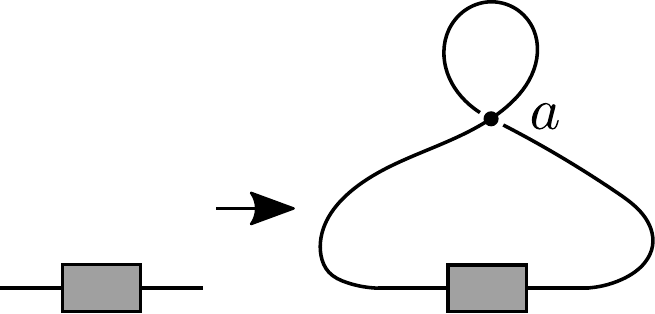}
%	\caption{Any Legendrian tangle can be closed in a canonical way by taking its Legendrian closure as depicted in the picture. A new crossing $a$ appears in the diagram when this closure process is performed.\label{ClosureNewCrossing}}
%\end{figure}

\begin{figure}[h]
	\centering
	\includegraphics[width=0.5\textwidth]{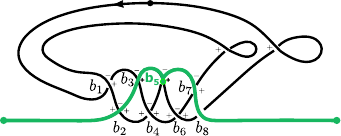}
	\caption{$b_5$ is one of the monomials in the word  $W_{K_{4,3}}$ associated to a $K_{4,3}$-tangle; i.e. $W_{K_{4,3}}=b_5+\cdots $.\label{torustangleword}}
\end{figure}

Let us elaborate a bit on the DGA of a $(n,2)$-torus knot $K_{n,2}$. 

\begin{definition}[\kalman \cite{kalman2}]
The \textbf{path matrix} associated to $K_{n,2}$ is defined as:

\[B^{n,2}:=\begin{pmatrix}
B_{1,1}^{n,2} & B_{1,2}^{n,2} \\
B_{2,1}^{n,2} & B_{2,2}^{n,2} \end{pmatrix}= \begin{pmatrix}
b_1 & 1 \\
1 & 0 
\end{pmatrix}\begin{pmatrix}
b_2 & 1 \\
1 & 0 
\end{pmatrix}\cdots\begin{pmatrix}
b_n & 1 \\
1 & 0 

\end{pmatrix},\]

where the entries $B_{i,j}^{n,2}$ just denote the polynomials defined by the product on the right.
\end{definition} 

\begin{remark}
In fact, Kálmán defines such a matrix for any positive braid but since we will focus on $(n,2)$-torus knots we will not introduce it in such generality.
\end{remark}

The path matrix is specially useful because of the following Lemma.

\begin{lemma}[Theorem 6.7 in \cite{kalman}]\label{diferencialesKalman}
For the Legendrian torus knot $K_{n,2}$ with degree $1$ generators $a_1, a_2$ and degree $0$ generators $b_1, \ldots, b_n$, we have:

\[\partial(b_i)=0,\quad \partial(a_1)=1+B_{1,1}^{n,2}\quad \text{ and } \quad \partial(a_2)=1+B_{2,2}^{n,2}+B_{2,1}^{n,2}B_{1,2}^{n,2},\]

where $B_{i,j}^{n,2}$ are the entries of the path matrix associated to $K_{n,2}$. %See also Example 3.3 in \cite{kalman2} for this computation.
\end{lemma}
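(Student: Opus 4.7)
The plan is to compute each differential by counting the immersed polygons described in equation \eqref{differential}. For $\partial(b_i)$, the cleanest argument is by degrees: since $|b_i|=0$ and $\partial$ lowers the grading by $1$, the image lies in the degree $-1$ subspace of $\mathcal{A}_{K_{n,2}}$. But every generator has non-negative degree and $|1|=0$, so that subspace is zero; no bigon with a single positive corner exists at a braid crossing either. Hence $\partial(b_i)=0$.

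For $a_1$ and $a_2$ I would first identify the two universal contributions and then interpret the rest through the path matrix. At each kink there is a small ``lobe'' bigon whose only corner (positive) sits at $a_i$; this produces the constant summand $1$. Any other polygon contributing to $\partial(a_i)$ must leave the kink and traverse the braid region. The key combinatorial observation is a transfer-matrix description: as the oriented boundary of such a polygon passes the $j$-th braid crossing $b_j$, there are exactly two local possibilities, namely, either take a negative quadrant at $b_j$ (staying on the same strand and contributing a factor $b_j$) or cross between the two strands without picking up a generator (contributing $1$). Writing these two options as the rows/columns of
\[
\begin{pmatrix} b_j & 1 \\ 1 & 0 \end{pmatrix},
\]
composing over $j=1,\dots,n$ yields the path matrix $B^{n,2}$, whose entry $B^{n,2}_{i,j}$ enumerates all ordered words of $b_\ell$'s realized by a boundary arc entering the braid on strand $i$ and exiting on strand $j$.

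It then remains to match the endpoints of these braid arcs with the local geometry at each kink. The cusp $a_1$ is incident to strand $1$ on both sides, so a non-lobe polygon must enter and exit the braid on strand $1$; the contribution is exactly $B^{n,2}_{1,1}$, giving $\partial(a_1)=1+B^{n,2}_{1,1}$. The cusp $a_2$ encloses $a_1$ and has one foot on each strand, so a contributing polygon either traverses the braid once from strand $2$ back to strand $2$ (contributing $B^{n,2}_{2,2}$), or crosses the braid once from strand $2$ to strand $1$, returns outside the braid using the boundary arc that surrounds $a_1$, and then recrosses the braid from strand $1$ back to strand $2$, contributing the ordered product $B^{n,2}_{2,1} B^{n,2}_{1,2}$. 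Summing gives $\partial(a_2)=1+B^{n,2}_{2,2}+B^{n,2}_{2,1}B^{n,2}_{1,2}$.

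The main obstacle is rigorously justifying the transfer-matrix dichotomy at each braid crossing and excluding spurious polygons: one must check that every immersed disc with one positive corner at $a_i$ decomposes uniquely into a braid traversal of this form, with the correct Reeb signs at each negative corner, and that no additional polygon with more than one braid traversal appears at $a_2$ (ruled out by positivity of area, Remark \ref{rmk:Heights}, together with the linear ordering of kinks).
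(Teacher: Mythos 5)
The paper does not actually prove this lemma: it is imported verbatim as Theorem~6.7 of \cite{kalman}, so the only available comparison is with K\'alm\'an's original argument, and your outline follows the same route. The degree argument for $\partial(b_i)=0$ is complete and correct: all generators of $\mathcal{A}_{K_{n,2}}$ have degree $0$ or $1$, $\rot=0$ makes the $\Z$-grading well defined, and $\partial$ drops degree by one, so the target of $\partial(b_i)$ is the zero subspace. The path-matrix description of the discs contributing to $\partial(a_1)$ and $\partial(a_2)$ is likewise exactly K\'alm\'an's mechanism, and your answer is consistent with the check in Example~\ref{example23}.

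The gap you flag in your last paragraph is, however, the entire content of the theorem rather than a routine verification, and two specific points are left unaddressed. First, your verbal dichotomy (``take a negative quadrant and record $b_j$, or switch strands and record $1$'') does not account for the $0$ in the $(2,2)$ slot of the local matrix: a boundary arc can pass a crossing staying on only \emph{one} of the two strands (the passage that picks up the corner $b_j$); the other strand-preserving passage is forbidden, and this exclusion is what makes the matrix product count admissible discs without overcounting. Second, for $\partial(a_2)$ the absence of terms with three or more braid traversals does not follow from positivity of area --- Remark~\ref{rmk:Heights} gives no bound of that kind, since the $b_j$ have arbitrarily small action compared with $a_2$ --- but from the nesting of the kinks: an iterated traversal must wrap around strictly more deeply nested cusps at each return, which forces the sequence of intermediate strand indices to be strictly decreasing and leaves only $B^{n,2}_{2,2}$ and $B^{n,2}_{2,1}B^{n,2}_{1,2}$ in the two-strand case. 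Establishing the bijection between the polygons of \eqref{differential} and the monomials of the path-matrix entries, with the correct Reeb signs at every corner, is precisely what Section~6 of \cite{kalman} and the path formalism of \cite{kalman2} are devoted to. So your proposal is a faithful sketch of the correct proof, but it is not self-contained; citing \cite{kalman}, as the paper does, remains the honest resolution.
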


\begin{example}[Example 3.3 in \cite{kalman2}]\label{example23}
For the positive right-handed Legendrian trefoil $K_{3,2}$ we have

\[ B^{3,2}= \begin{pmatrix}
b_1 & 1 \\
1 & 0 
\end{pmatrix}\begin{pmatrix}
b_2 & 1 \\
1 & 0 
\end{pmatrix}\begin{pmatrix}
b_3& 1 \\
1 & 0 
\end{pmatrix}=\begin{pmatrix}
b_1 + b_3 + b_1b_2b_3 & 1 + b_1b_2 \\
1 + b_2b_3 & b_2 \end{pmatrix}.\]

We obtain then that $\partial(a_1)=1+b_1 + b_3 + b_1b_2b_3$ and $\partial(a_2)=1+1+b_2+b_2b_3+b_1b_2+b_2b_3b_1b_2$.
\end{example}

\begin{definition}
The \textbf{length} of a polynomial $P$, denoted by $\ell(P)$, is the number of words in $P$.
\end{definition}

\begin{lemma}\label{fibonacci}
%Let $K_{n,2}$ be a torus knot with degree $1$ generators $a_1, a_2$ with a diagrammatic representation as described in the beginning of Subsection \ref{pqRealization}. Then, 
The lengths of the entries of the path matrix of a $(n,2)$-torus knot satisfy the following recursive property:
\[\ell(B_{1,1}^{n,2})=F_{n+1}, \quad \ell(B_{1,2}^{n,2})=\ell(B_{2,1}^{n,2})=F_n \text{ and } \quad \ell(B_{2,2}^{n,2})=F_{n-1},\]

where $F_i$ denotes the $i$-th \textbf{Fibonacci number}.
\end{lemma}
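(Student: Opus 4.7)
The plan is a straightforward induction on $n$. For the base case $n=1$, we have $B^{1,2}=\begin{pmatrix} b_1 & 1 \\ 1 & 0 \end{pmatrix}$, so the lengths are $1,1,1,0$, matching $F_2, F_1, F_1, F_0$ (with the convention $F_0=0$, $F_1=F_2=1$). The inductive step rests on the obvious recursion
\[
B^{n+1,2}=B^{n,2}\cdot\begin{pmatrix} b_{n+1} & 1 \\ 1 & 0 \end{pmatrix},
\]
which yields $B^{n+1,2}_{1,1}=B^{n,2}_{1,1}\,b_{n+1}+B^{n,2}_{1,2}$, $B^{n+1,2}_{1,2}=B^{n,2}_{1,1}$, $B^{n+1,2}_{2,1}=B^{n,2}_{2,1}\,b_{n+1}+B^{n,2}_{2,2}$, and $B^{n+1,2}_{2,2}=B^{n,2}_{2,1}$. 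Taking lengths formally (ignoring cancellations) gives exactly the Fibonacci recurrence: $\ell(B^{n+1,2}_{1,1})=F_{n+1}+F_n=F_{n+2}$, $\ell(B^{n+1,2}_{1,2})=F_{n+1}$, $\ell(B^{n+1,2}_{2,1})=F_n+F_{n-1}=F_{n+1}$, and $\ell(B^{n+1,2}_{2,2})=F_n$, as desired.

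The only real obstacle is to certify that no cancellation occurs in these $\mathbb{Z}_2$-sums; without this, the lengths could in principle drop. To handle it, I would carry along an auxiliary structural claim inside the induction: every monomial appearing in any entry $B^{n,2}_{i,j}$ is a product $b_{i_1}b_{i_2}\cdots b_{i_k}$ with strictly increasing indices $1\le i_1<i_2<\cdots<i_k\le n$, and all monomials within one entry are pairwise distinct. This claim propagates through the recursion immediately: $B^{n,2}_{1,1}\,b_{n+1}$ produces monomials that end in $b_{n+1}$ (and are distinct, because appending a letter to a set of distinct words preserves distinctness), while $B^{n,2}_{1,2}$ produces monomials that do not involve $b_{n+1}$ at all. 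Hence the two summands in the sum defining $B^{n+1,2}_{1,1}$ are supported on disjoint sets of monomials, so there is no cancellation modulo $2$; the same argument applies verbatim to $B^{n+1,2}_{2,1}$, and the equalities $B^{n+1,2}_{1,2}=B^{n,2}_{1,1}$ and $B^{n+1,2}_{2,2}=B^{n,2}_{2,1}$ are length-preserving trivially.

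With the structural claim in hand, the length recursion produces the Fibonacci numbers exactly, closing the induction. The rest is the well-known identity $F_{n+1}+F_n=F_{n+2}$, which does not require further comment.
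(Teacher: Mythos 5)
Your proof is correct and follows essentially the same route as the paper's: induction on $n$ via the matrix recursion $B^{n+1,2}=B^{n,2}\cdot\left(\begin{smallmatrix} b_{n+1} & 1 \\ 1 & 0 \end{smallmatrix}\right)$, with the lengths obeying the Fibonacci recurrence. Your additional structural claim (monomials have strictly increasing indices, so the two summands in each new entry have disjoint supports and no $\mathbb{Z}_2$-cancellation can reduce the length) is a worthwhile refinement that the paper's proof leaves implicit.
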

\begin{proof}
The proof follows by induction. On this induction we will not care if the value of $n$ gives raise to a torus knot or link. Note that the base case $n=3$ is proved in Example \ref{example23}. Therefore, we assume that the claim holds true for $n=k$ and we will prove it for $n=k+1$. Observe that
\[ B^{k+1,2}= B^{k,2} \cdot \begin{pmatrix}
b_{k+1} & 1 \\
1 & 0 
\end{pmatrix}=\begin{pmatrix}
B_{1,1}^{k,2} & B_{1,2}^{k,2} \\
B_{2,1}^{k,2} & B_{2,2}^{k,2} 
\end{pmatrix}\cdot \begin{pmatrix}
b_{k+1} & 1 \\
1 & 0 
\end{pmatrix}=\begin{pmatrix}
B_{1,1}^{k,2}\cdot b_{k+1}+B_{1,2}^{k,2}& \quad B_{1,1}^{k,2} \\
B_{2,1}^{k,2}\cdot b_{k+1}+B_{2,2}^{k,2} & \quad B_{2,1}^{k,2} 
\end{pmatrix}.\]
It follows that
\begin{itemize}

\item[(i)] $\ell(B_{1,1}^{k+1,2})=\ell(B_{1,1}^{k,2})+\ell(B_{1,2}^{k,2})=F_{k+1}+F_k=F_{k+2}$,
\item[(ii)] $\ell(B_{1,2}^{k+1,2})=\ell(B_{1,1}^{k,2})=F_{k+1}$,
\item[(iii)] $\ell(B_{2,1}^{k+1,2})=\ell(B_{2,1}^{k,2})+\ell(B_{2,2}^{k,2})=F_k+F_{k-1}=F_{k+1}$, 
\item[(iv)] $\ell(B_{2,2}^{k+1,2})=\ell(B_{2,1}^{k,2})=F_k$. 

\end{itemize}

This proves the inductive step, thus yielding the claim.\end{proof}

%\textbf{Base case: n=3.}  The computation is done in Example \ref{example23}.%The claim is true for $K_{3,2}$ (see Example \ref{example23}) since 
%
%\[\ell(B_{1,1})=F_{4}=3, \quad \ell(B_{1,2})=\ell(B_{2,1})=F_3=2, \quad \ell(B_{2,2})=F_{1}=1.\]

%\textbf{Inductive step: n=k+1.} We assume the claim true for $n=k$ and we will prove it for $n=k+1$. Note that $$

%Use the following notation:
%
%\[B^{2, k}_\beta=\begin{pmatrix}
%B_{1,1} & B_{1,2} \\
%B_{2,1} & B_{2,2} 
%\end{pmatrix},\quad
%B^{2, k+1}_\beta=\begin{pmatrix}
%\tilde{B}_{1,1} & \tilde{B}_{1,2} \\
%\tilde{B}_{2,1} & \tilde{B}_{2,2} 
%\end{pmatrix}.\]
%We then have

%\[ B^{2, k+1}_\beta= B^{2, k}_\beta \cdot \begin{pmatrix}
%b_{k+1} & 1 \\
%1 & 0 
%\end{pmatrix}=\begin{pmatrix}
%B_{1,1} & B_{1,2} \\
%B_{2,1} & B_{2,2} 
%\end{pmatrix}\cdot \begin{pmatrix}
%b_{k+1} & 1 \\
%1 & 0 
%\end{pmatrix}=\begin{pmatrix}
%B_{1,1}\cdot b_{k+1}+B_{1,2}& B_{1,1} \\
%B_{2,1}\cdot b_{k+1}+B_{2,2} & B_{2,1} 
%\end{pmatrix}.\]

%Therefore, it follows that

%\begin{itemize}

%\item[i)] $\ell(\tilde{B}_{1,1})=F_{k+1}+F_k=F_{k+2}$,
%\item[ii)] $\ell(\tilde{B}_{1,2})=F_{k+1}$,
%\item[iii)] $\ell(\tilde{B}_{2,1})=F_k+F_{k-1}=F_{k+1}$, 
%\item[iv)] $\ell(\tilde{B}_{2,2})=F_{k}$. 

%\end{itemize}

%This proves the inductive step, thus yielding the claim.\end{proof}

\subsection{Legendrians of $\partial$-even class}

We now introduce a class of Legendrians that will play a central role along the paper.

\begin{definition}\label{EvenDeltaClass}
A Legendrian knot $K$ is of \textbf{even $\partial$-class} if it is representable as a Legendrian in standard position where:

\begin{itemize}
    \item[i)] $rot(K)=0$,
    \item[ii)] $\ell(\partial(c))\in 2\Z$ for every degree $1$ generator $c\in\mathcal{A}_K$ and 
    \item[iii)] there are no negative degree crossings.
    \end{itemize}
\end{definition}
%, working in the Lagrangian projection, the differential of every degree-$1$ crossing $a_i$ is an expression $\partial(a_i)$ containing an even number of words.

\begin{remark}
    Let us say a few words about the conditions in Definition \ref{EvenDeltaClass}: $i)$ is just required for technical reasons all along the article (recall Remark \ref{rotationRemark}). Points $ii)$ and $iii)$ are properties that: a) are preserved under connected-summation (Proposition \ref{ConnectedSumsEvenDeltaClass}) and b) altogether imply that the LCH of the knot is not trivial under an additional mild assumption (Remark \ref{NonVanishingLCH}). Moreover, we will make use of property $ii)$ in our subsequent computations.
\end{remark}

Henceforth, when talking about knots of even $\partial$-class we will implicitly assume that they are in standard position. Likewise, when considering connected-sums of even $\partial$-class knots, we will assume that the connnected-sum is diagrammatically performed by concatenating their associated tangles as in Proposition \ref{StandardModelPosition}.

The following remark has interest on its own.

\begin{remark}\label{NonVanishingLCH}
    Note that if $K$ is of even $\partial$-class and, in addition, contains a $0$-degree generator $b\in \mathcal{A}_K$ (thus with $\partial(b)=0$), then the knot has non-trivial level-$0$ homology $H_0(K)\neq 0$. The reason is that all differentials $\partial(W)$ of degree-$1$ words $W$ live in the ideal generated by differentials of degree-$1$ crossings $I=\langle \partial(a_1),\cdots \partial(a_k)\rangle $. Therefore, since the monomial $b$ has odd-length $\ell(b)=1\notin 2\mathbb{Z}$, then $[b]\in H_0(K)$ represents a non-trivial homology class.
\end{remark}

The following proposition shows that there are infinitely many Legendrian torus knots of even $\partial$-class and thus the study of 
this class of knots is meaningful.
\begin{proposition}\label{EvenDeltaClassTorus}
Legendrian positive $(n, 2)$ torus knots are of  even $\partial$-class if and only if $n\not\equiv 2 \mod 3$.
\end{proposition}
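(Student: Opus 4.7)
The strategy is to check the three conditions of Definition \ref{EvenDeltaClass} for the standard-position representative of $K_{n,2}$ described in Subsection \ref{pqRealization}. Conditions (i) and (iii) are immediate: by our convention $\rot(K_{n,2})=0$, and the only generators of $\mathcal{A}_{K_{n,2}}$ are the degree-$0$ braid crossings $b_1,\ldots,b_n$ and the degree-$1$ kink crossings $a_1, a_2$, so no generator has negative degree. The content of the proof is thus condition (ii), namely $\ell(\partial(a_i))\in 2\mathbb{Z}$ for $i=1,2$.

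For this, I would first invoke Lemma \ref{diferencialesKalman} to write
\begin{equation*}
\partial(a_1) = 1 + B_{1,1}^{n,2}, \qquad \partial(a_2) = 1 + B_{2,2}^{n,2} + B_{2,1}^{n,2}\,B_{1,2}^{n,2}.
\end{equation*}
The key observation is that, because the DGA is defined over $\mathbb{Z}_2$, passing from an algebraic expansion of a polynomial to its minimal expression only cancels monomials in pairs; hence the parity of $\ell(P)$ coincides with the parity of the total count of monomials appearing in any (non-reduced) expansion of $P$. Combining this parity-preservation principle with Lemma \ref{fibonacci} (applied to $B_{1,1}^{n,2}$, $B_{1,2}^{n,2}$, $B_{2,1}^{n,2}$ and $B_{2,2}^{n,2}$, and counting the product $B_{2,1}^{n,2}B_{1,2}^{n,2}$ as having $F_n\cdot F_n = F_n^{2}$ expansion terms) yields
\begin{equation*}
\ell(\partial(a_1)) \equiv 1 + F_{n+1} \pmod{2}, \qquad \ell(\partial(a_2)) \equiv 1 + F_{n-1} + F_n^{2} \pmod{2}.
\end{equation*}

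It only remains to determine when these two quantities are simultaneously even. Since $F_m \bmod 2$ is periodic of period three with pattern (odd, odd, even), $F_m$ is even if and only if $3\mid m$. A short case split on $n\bmod 3$ finishes the argument: when $n\equiv 0$ or $n\equiv 1 \pmod{3}$, exactly one of $F_{n-1}, F_n, F_{n+1}$ is even and the parities combine to make both $1+F_{n+1}$ and $1+F_{n-1}+F_n^{2}$ even; when $n\equiv 2\pmod{3}$ the number $F_{n+1}$ is even, so $1+F_{n+1}$ is odd and the condition already fails for $a_1$. This yields the equivalence $K_{n,2}$ of even $\partial$-class $\Longleftrightarrow n \not\equiv 2\pmod{3}$.

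The main subtlety I would flag is the step that moves from the non-reduced expansions provided by Lemmas \ref{diferencialesKalman}--\ref{fibonacci} to a statement about the \emph{minimal} length $\ell$: a priori the polynomial $B_{2,1}^{n,2}B_{1,2}^{n,2} + B_{2,2}^{n,2} + 1$ could present non-trivial cancellations over $\mathbb{Z}_2$. The parity-preservation observation above is what makes these cancellations irrelevant to the parity computation and avoids a delicate combinatorial analysis of the monomials appearing in products of path-matrix entries.
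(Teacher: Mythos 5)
Your proof is correct and follows essentially the same route as the paper's: reduce to condition (ii), apply Lemma \ref{diferencialesKalman} and Lemma \ref{fibonacci}, and use the mod-$3$ periodicity of Fibonacci parity. The only difference is that you make explicit the parity-preservation argument justifying the passage from expansion counts to minimal lengths over $\mathbb{Z}_2$, a point the paper handles implicitly by asserting that $\ell(\partial(a_i))$ ``has the same parity as'' the corresponding Fibonacci expression.
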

\begin{proof}
As explained in the previous Subsection conditions $i)$ and $iii)$ in Definition \ref{EvenDeltaClass} are true for positive torus knots. So it suffices to check property $ii)$. By Lemma \ref{diferencialesKalman} and Lemma \ref{fibonacci} we know that $\ell\left(\partial(a_1)\right)$ has the same parity as the number $A_1:=F_{n+1}-1$ while $\ell\left(\partial(a_2)\right)$ has the same parity as the number $A_2:=F_{n-1}+F_n^2-1$. 

The $i$-th Fibonacci number $F_i$ is even if and only if $i\equiv 0 \mod 3$. Therefore, if $n\equiv 0\mod 3$ or $n\equiv 1 \mod 3$, both $A_1$ and $A_2$ are even numbers, while if $n\equiv 2\mod 3$ then this is not the case, thus yielding the claim.\end{proof}

We can construct infinitely many more examples by the following corollary.

\begin{proposition}\label{ConnectedSumsEvenDeltaClass}
If $K_1,\ldots, K_k$ are of even $\partial$-class, then their connected sum $K_{1}\#\cdots\# K_{k}$ also is.
\end{proposition}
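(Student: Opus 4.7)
The plan is to proceed by induction on $k$, which reduces the claim to the case $k=2$. Set $K := K_1 \# K_2$ and use Proposition \ref{StandardModelPosition} to represent $K$ in standard position, so that its associated tangle is the concatenation $\tilde{K}_1 \cdot \tilde{K}_2$. The Lagrangian projection of $K$ then consists of the crossings of $\tilde{K}_1$, the crossings of $\tilde{K}_2$, and a single new closure crossing $a_K$ of degree $1$.

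Conditions (i) and (iii) of Definition \ref{EvenDeltaClass} are essentially immediate: $\rot(K) = \rot(K_1) + \rot(K_2) = 0$ by additivity of the rotation number under connected sum, and no new negative-degree crossings are introduced by the concatenation (the only new crossing, $a_K$, has degree $1$).

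For condition (ii), I would split the degree-$1$ generators of $\mathcal{A}_K$ into two families: (a) those inherited from some $\tilde{K}_i$ other than the original closure crossings $a_{K_i}$, and (b) the new closure crossing $a_K$. For a type-(a) generator $c$ coming from $\tilde{K}_1$, every negative vertex of a polygon contributing to $\partial_K(c)$ has degree $0$ (since $c$ has degree $1$ and no negative-degree crossings exist in $K$), and one verifies that the polygons at $c$ in the standard-position diagram of $K$ are in parity-preserving correspondence with those at $c$ in $K_1$: any polygon whose boundary extends into the $\tilde{K}_2$ region does so only through the simple closure arcs, and the additional contributions come in even numbers. Hence $\ell(\partial_K(c)) \equiv \ell(\partial_{K_1}(c)) \pmod{2}$, which is even by hypothesis.

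For the type-(b) generator $a_K$, write $\partial(a_K) = 1 + W_K$, where $W_K$ is the word associated to $\tilde{K}_1 \cdot \tilde{K}_2$. By tracing polygons that wrap around the concatenated tangle, I would derive the product-type relation $W_K = W_{K_1} \cdot W_{K_2}$ (with $W_{K_i} := \partial_{K_i}(a_{K_i}) - 1$), which encodes the fact that such a polygon decomposes uniquely as a partial polygon through $\tilde{K}_1$ followed by one through $\tilde{K}_2$. Each $K_i$ being of even $\partial$-class gives $\ell(\partial_{K_i}(a_{K_i})) = 1 + \ell(W_{K_i})$ even, so each $\ell(W_{K_i})$ is odd; then $\ell(W_K) \equiv \ell(W_{K_1}) \cdot \ell(W_{K_2}) \equiv 1 \pmod{2}$ (cancellations over $\mathbb{Z}_2$ occur in pairs and preserve parity), which makes $\ell(\partial(a_K)) = 1 + \ell(W_K)$ even.

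The main obstacle will be the careful polygon bookkeeping required for both claims: for type-(a) generators one must verify the parity-preservation when polygons traverse the new $\tilde{K}_2$ region, and for $a_K$ one must establish the product formula by uniquely decomposing each winding polygon into a piece contributing to $W_{K_1}$ and a piece contributing to $W_{K_2}$.
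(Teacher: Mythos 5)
Your overall architecture matches the paper's: conditions (i) and (iii) are dispatched as immediate, the closure crossing is handled via $\partial(a_K)=1+W_{K_1}W_{K_2}$ with $\ell(W_{K_i})$ odd, so that $\ell(W_{K_1}W_{K_2})$ is odd and $\ell(\partial(a_K))$ is even (and your remark that $\Z_2$-cancellations occur in pairs and so preserve parity is a worthwhile point the paper leaves implicit). The product formula for the associated words is exactly the paper's.

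The gap is in your type-(a) step. You assert a ``parity-preserving correspondence'' in which polygons at an inherited degree-$1$ crossing $c$ may extend into the $\tilde{K}_2$ region but ``the additional contributions come in even numbers,'' and you defer the verification to polygon bookkeeping. That claim is unsubstantiated as stated, and it is also not the right statement: what is true (and what the paper proves) is that there are \emph{no} additional contributions at all, so that $\partial_K(c)$ literally coincides with $\partial_{K_i}(c)$. The paper's mechanism is not combinatorial polygon-counting but the action filtration: apply a shrinking isotopy (Definition \ref{shrinking}) to the tangle containing $c$ so that every Reeb chord of that tangle has action arbitrarily small compared to every crossing of the other tangles; Lemma \ref{LemaAction} (Stokes' theorem) then forbids any crossing of another tangle from appearing as a negative corner of a polygon with positive corner at $c$, since negative corners must have strictly smaller action. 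This both closes your gap and removes the need for the case analysis you flag as the main obstacle; I would replace your correspondence argument with this one. (The same device is reused later in the paper, e.g.\ in Lemma \ref{PropiedadesIdealDiferencial} and Lemma \ref{BarridosNoAfectan}, so it is worth isolating.) With that substitution your proof is the paper's proof; the induction on $k$ versus a single $k$-fold concatenation is immaterial.
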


\begin{proof} Properties $i)$ and $iii)$ in Definition \ref{EvenDeltaClass} are straightforward. As for $ii)$, each of the knots $K_j$ is individually of even $\partial$-class. Then, each associated word $W_{K_j}$ is of odd length (recall Definition \ref{AssociatedWord}); i.e. $\ell(W_{K_j})\in 2\mathbb{Z}+1$, $j=1,\cdots, k$. 

Recall that we consider the connected sum as the closure of the concatenation of their respective associated tangles. Observe that the closure crossing  $a$ (i.e. the uppermost crossing in the diagram) has differential 

\[\partial(a)=1+W_{K_{1}} W_{K_{2}}\cdot \cdots W_{K_{k}}\] 

and, thus, its length $\ell(\partial(a))$ is even. 

Additionally, consider any other degree $1$ crossing $x$ in any of the knots $K_{j}$ for some $j=1,\cdots, k$, prior to performing the connected sum. Since $K_{j}$ is of even $\partial$-class, then   $\partial(x)\in 2\mathbb{Z}$. Also, note that after performing the connected sum with the rest of the knots, the differential of this crossing $x$ remains the same, i.e. it coincides with its original expression before performing the connected sum.

In order to check this last statement, note that each of the tangles associated to $K_{j,2}$ can be made arbitrarily small via a shrinking isotopy. Therefore, by Lemma \ref{LemaAction}, the expression of the differential of any crossing in it cannot involve crossings from the other tangles since they have much higher action. This yields the claim.
\end{proof}

\begin{corollary}\label{ConnectedSumsN2}
    
Connected sums $K=K_{n_1, 2}\#\cdots \# K_{n_k, 2}$ of Legendrian positive torus knots  $K_{n_i, 2}$ are of  even $\partial$-class if $n_i\not\equiv 2 \mod 3$ for all $i=1,\ldots, k$.
\end{corollary}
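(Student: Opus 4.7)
The plan is essentially a one-line deduction combining the two preceding propositions, so I will just verify that the hypotheses line up correctly and briefly indicate why no additional work is needed. First, I would apply Proposition \ref{EvenDeltaClassTorus} to each individual factor: the hypothesis $n_i \not\equiv 2 \bmod 3$ for every $i=1,\ldots,k$ is precisely the condition guaranteeing that each positive torus knot $K_{n_i,2}$ is of even $\partial$-class. Hence we obtain that all $k$ factors $K_{n_1,2},\ldots, K_{n_k,2}$ lie in this class individually.

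Next, I would invoke Proposition \ref{ConnectedSumsEvenDeltaClass}, which states that the class of even $\partial$-class Legendrians is closed under connected sums. Applying this to the family $\{K_{n_i,2}\}_{i=1}^{k}$ immediately yields that $K = K_{n_1,2} \# \cdots \# K_{n_k,2}$ is itself of even $\partial$-class, which is exactly the statement to be proved.

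There is no real obstacle in this argument since the work has already been done in the two previous propositions: Proposition \ref{EvenDeltaClassTorus} supplies the base cases through the Fibonacci-length computation for $\ell(\partial(a_1))$ and $\ell(\partial(a_2))$ together with the fact that $F_i$ is even iff $i \equiv 0 \bmod 3$, and Proposition \ref{ConnectedSumsEvenDeltaClass} handles the inductive closure under $\#$ via the action argument (Lemma \ref{LemaAction}) that ensures differentials of crossings in each small tangle are unaffected by the other summands. The only thing one might want to double-check is that the convention for concatenating tangles (as in Proposition \ref{StandardModelPosition} and Remark \ref{rmk:Reparametrizations}) is consistent with the standard position hypothesis implicit in Definition \ref{EvenDeltaClass}, but this was already observed in the paragraph preceding Remark \ref{NonVanishingLCH}. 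Consequently, the corollary follows with no additional calculation.
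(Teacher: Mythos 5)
Your proposal is correct and matches the paper's intent exactly: the corollary is a direct combination of Proposition \ref{EvenDeltaClassTorus} (each $K_{n_i,2}$ with $n_i\not\equiv 2 \bmod 3$ is of even $\partial$-class) and Proposition \ref{ConnectedSumsEvenDeltaClass} (closure under connected sums), which is why the paper states it without further proof. No gaps.
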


\begin{definition}
Consider a certain knot diagram with set of crossings $C$. For any formal expression 

\[\phi=\sum_{k\in\mathbb{N}, (b_1,\ldots, b_k)\in C^k}\lambda(b_1,\ldots,b_k) b_1\cdots b_k,\]

where $\lambda(b_1,\ldots,b_j)\in\Z_2$, denote by $\max_{b_j}(\phi)\in\N$ the maximum number of $b_j$-letters appearing in a single word of the expression $\phi$. Define the function 

$$\tau_{b_j}(\phi)\in\N$$ 

to be the number of words in $\phi$ containing exactly $\max_{b_j}(\phi)$ $b_j$-letters.%(among all the words involved in $\phi$). Denote by $h_{b_j}^r(\phi)$ the number of words in $\phi$ containing exactly $r$ $b_j$-letters.We define the function $\tau_{b_j}(\phi)$ as follows:\[\tau_{b_j}(\phi)=h_{b_j}^{\max_{b_j}(\phi)}(\phi)\] 
\end{definition}

\begin{example}
Consider $\phi= b_1b_2b_1b_3b_1+b_1b_3b_1+b_1b_4b_1b_2b_1+b_4+b_2b_1$. Then, $\max_{b_1}(\phi)=3$, since the maximum number of $b_1$-letters appearing in a single word is $3$; attained by the first and the third words in the expression.
On the other hand, $\tau_{b_1}(\phi)=2$ since there are exactly two words in $\phi$ containing exactly $\max_{b_1}(\phi)=3$ $b_1$-letters.
\end{example}

In the following Lemma we label the crossings of the right-handed trefoil $K_{3,2}$ as in the previous subsection: $b_1,b_2$ and $b_3$ stand for the degree $0$ crossings; and $a_1$ and $a_2$ for the ones of degree $1$. The following observation will be crucial later.

\begin{lemma}\label{PropiedadesIdealDiferencial}
Let $\tilde{K}$ be a Legendrian knot of even $\partial$-class and consider the (also of even $\partial$-class) Legendrian $K:=\tilde{K}\# K_{3,2}$.
Any degree-$0$  element $\phi\in \im(\partial)$ satisfies that 

$$\tau_{b_3}(\phi)\in 2\Z.$$%Let $K$ denote the connected-sum knot $\tilde{K}\# K_{3,2}$ where $\tilde{K}=K_{p_1,q_1}\#\cdots\#K_{p_n,q_n}$ is the connected sum of $n$ torus knots which is of even $\partial$-class. Then for any element $\phi\in \im(\partial)\subset I=\langle \partial(a_1), \partial(a_2), \ldots, \partial(\tilde{a}_1),\ldots, \partial(\tilde{a}_k)\rangle$ (where $\tilde{a}_i$ are the degree-$1$ generators in the $\tilde{K}$-tangle and $a_1, a_2$ the other two degree-$1$ generators, see Figure \ref{trefoilconpq}), the following property holds:
%
%\begin{itemize}
%\item The integer $\tau_{b_3}(\phi)$ is even.
%\end{itemize}
%
\end{lemma}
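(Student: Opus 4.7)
The plan is to argue via a $b_3$-layer parity count. First, since every degree-$0$ generator $b$ of $\mathcal{A}_K$ satisfies $\partial(b) = 0$ (as $K$ has no negative-degree generators, since neither $\tilde K$ nor $K_{3,2}$ does), the Leibniz rule forces $\partial$ to vanish on every degree-$0$ word. Consequently, any degree-$1$ element $\psi \in \mathcal{A}_K$ can be decomposed as $\psi = \sum_j v_{1,j}\,\alpha_j\,v_{2,j}$ with $\alpha_j$ a degree-$1$ generator and $v_{1,j}, v_{2,j}$ degree-$0$ words, and $\partial(\psi) = \sum_j v_{1,j}\,\partial(\alpha_j)\,v_{2,j}$. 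Hence $\im(\partial)$ in degree $0$ is the $\mathbb{Z}_2$-span of such elementary terms, so it suffices to control the $b_3$-layers of each $v_1\,\partial(\alpha)\,v_2$.

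I would then enumerate the degree-$1$ generators of $K = \tilde K \# K_{3,2}$ and, for each, split $\partial(\alpha)$ into layers indexed by the number of $b_3$-letters per monomial. For $\alpha = \tilde a$ a non-closure degree-$1$ generator of $\tilde K$, the shrinking argument from the proof of Proposition~\ref{ConnectedSumsEvenDeltaClass} (together with Lemma~\ref{LemaAction}) gives that $\partial(\tilde a)$ is unchanged from its expression in $\mathcal{A}_{\tilde K}$: it involves no $b_3$-letter and has even total length by the even-$\partial$-class hypothesis. For $\alpha = a_2$, Example~\ref{example23} yields $\partial(a_2) = b_2 + b_1 b_2 + b_2 b_3 + b_2 b_3 b_1 b_2$, with two monomials containing zero $b_3$'s and two containing one $b_3$. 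For the new top closure crossing $a$ of $K$, the proof of Proposition~\ref{ConnectedSumsEvenDeltaClass} gives $\partial(a) = 1 + W_{\tilde K}\cdot W_{K_{3,2}}$; using $W_{K_{3,2}} = b_1 + b_3 + b_1 b_2 b_3$ and the fact that $W_{\tilde K}$ has odd length $k = \ell(\partial(a_{\tilde K})) - 1$ (since $\ell(\partial(a_{\tilde K}))$ is even by the hypothesis), the expansion gives $1+k$ monomials with no $b_3$ and $2k$ monomials with one $b_3$, already in minimal form since the monomials of $W_{\tilde K}$ only involve letters disjoint from $\{b_1, b_2, b_3\}$. In every case each $b_3$-layer of $\partial(\alpha)$ is of even length, and this property is preserved under multiplication on the left and right by degree-$0$ words $v_1, v_2$, since such multiplication merely shifts the $b_3$-count of every monomial by a fixed amount and introduces no cancellations.

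To conclude, set $M = \max_{b_3}(\phi)$. The formal sum $\sum_j v_{1,j}\,\partial(\alpha_j)\,v_{2,j}$ expressing $\phi$ contributes to the $M$-layer a disjoint union of monomials of even cardinality (summing the even counts from each $j$). Reducing modulo $2$ to obtain the minimal expression of $\phi$ cancels monomials in pairs, which preserves this parity; therefore $\tau_{b_3}(\phi)$, being by definition the number of monomials of the minimal expression of $\phi$ containing exactly $M$ copies of $b_3$, belongs to $2\mathbb{Z}$.

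The main technical obstacle is the layer-parity verification for $\partial(a)$: it hinges on $\ell(W_{\tilde K})$ being odd, which is precisely where the even-$\partial$-class hypothesis on $\tilde K$ enters in an essential way (via $\ell(W_{\tilde K}) = \ell(\partial(a_{\tilde K})) - 1$ together with the evenness of $\ell(\partial(a_{\tilde K}))$).
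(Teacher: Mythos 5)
Your proof is correct and follows essentially the same route as the paper's: reduce to the differentials of the degree-$1$ generators, check that every $b_3$-layer of each such differential has even cardinality (the key input being that $\ell(W_{\tilde K})$ is odd because $\tilde K$ is of even $\partial$-class), and note that this layerwise parity is preserved under left/right multiplication by degree-$0$ words and under mod-$2$ cancellation. You are in fact more careful than the paper on one point: the paper only records $\tau_{b_3}$ of each generator's differential (the top layer), whereas your all-layers version is what the conclusion actually requires, since cancellation can lower $\max_{b_3}$. Two bookkeeping slips, neither of which affects the parity count: in the paper's diagram the closure crossing of $K$ is the outer kink $a_2$ itself, whose differential becomes $1+W_{\tilde K}\cdot(1+b_2+b_2b_3+b_1b_2+b_2b_3b_1b_2)$ --- there is no additional crossing $a$, and $W_{K_{3,2}}$ is $\partial(a_2)-1$ rather than $\partial(a_1)-1=b_1+b_3+b_1b_2b_3$; and your case enumeration omits $a_1$, whose layers have cardinalities $2$ and $2$ and are of course even.
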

\begin{proof}
We label the degree $1$ generators of $\mathcal{A}_K$ in the $\tilde{K}$-tangle as $\tilde{a}_1,\ldots,\tilde{a}_k$; and the other two generators as $a_1$ and $a_2$, where $a_2$ corresponds to the closure crossing (see Figure \ref{concatenationLoops}).
It is enough to check the property for every element in the ideal $I=\langle \partial(a_1),\partial(a_2),\partial(\tilde{a}_1),\ldots,\partial(\tilde{a}_k) \rangle$. If there was no $\tilde{K}$-tangle involved we would recover the trefoil knot, where we have (Example \ref{example23}):

\begin{align*}
\partial(a_1) =& 1+b_1+b_3+b_1b_2b_3\\
\partial(a_2) =& 1+1+b_2+b_2b_3+b_1b_2+b_2b_3b_1b_2. 
\end{align*}

\begin{remark} We write $\partial(a_2)=1+1+\cdots$  even if we are working in $\mathbb{Z}_2-$coefficients since when considering the additional $\tilde{K}$-tangle in the next case, one of the monogones will become another polygon and produce a different word.\end{remark}

\begin{figure}[h]
	\centering
	\includegraphics[width=0.4\textwidth]{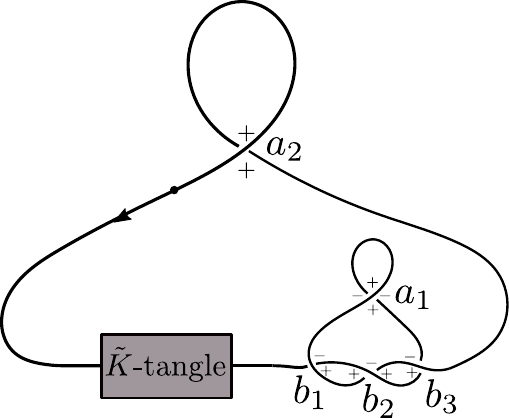}
	\caption{The Legendrian $K=\tilde{K}\#K_{3,2}$ with the labelling of some crossings.}\label{concatenationLoops} 
\end{figure}

%\begin{figure}[h]
%	\centering
%	\includegraphics[width=0.3\textwidth]{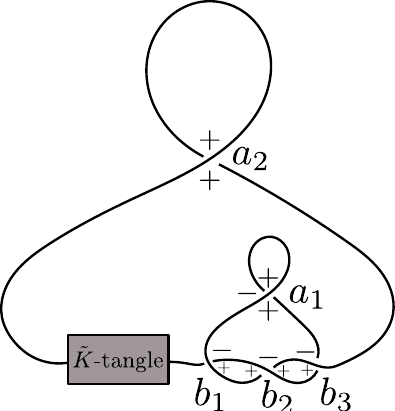}
%	\caption{Connected sum of a $(p,q)$-torus knot with a trefoil and corresponding labelling of some of the crossings.\label{trefoilconpq}}
%\end{figure}

Since we do have an additional $\tilde{K}$-tangle involved in the knot between the crossing $a_2$ and the corresponding path joining it with $b_1, b_2, b_3$, then the $\tilde{K}$-word $W_{\tilde{K}}$ appears in the expression as follows:
\begin{align*}
\partial(a_1) =& 1+b_1+b_3+b_1b_2b_3,\\
\partial(a_2) =& 1+W_{\tilde{K}}\cdot\left(1+b_2+b_2b_3+b_1b_2+b_2b_3b_1b_2\right).
\end{align*}

Also, it is clear that $b_1, b_2, b_3$ do not appear in the expressions  $\partial(\tilde{a}_i)$ for any of the $\tilde{a}_i$, by action considerations (recall the argument in the last paragraph of the proof of Proposition \ref{ConnectedSumsEvenDeltaClass}).
   
We have that 
\begin{align*}
\tau_{b_3}\left(\partial(a_1)\right)=& 2\in2\mathbb{Z},\\
\tau_{b_3}\left(\partial(a_2)\right)=& 2\cdot\ell\left(W_{\tilde{K}}\right)\in2\mathbb{Z}.
\end{align*}

Therefore the property holds for the generators of the ideal $I$. Since $\tilde{K}$ is of even $\partial$-class, all the expressions $\partial{\tilde{a}_i}$ contain an even number of words (Corollary \ref{ConnectedSumsN2}) and, thus, the property holds for any element in the ideal $I$.%both $\partial(a_i)$ generators and it also holds trivially for the other $\partial(\tilde{a}_i)$ generators. Since $W_{\tilde{K}}$ is of even $\partial$-class, all the expressions $\partial{\tilde{a}_i}$ contain an even number of words and, thus, it is clear then that the Property holds for any expression in the ideal $I$. Since $\im(\partial)\subset I$, the claim follows.
\end{proof}

\subsection{The monodromy of a fly}

The following lemma will be key in our subsequent arguments and has interest on its own. It encapsulates the idea that in the elephant-fly construction the monodromy of the fly is trivial; i.e. it is not affected by the movements of the elephant. See Remark \ref{EkholmRemark}. 

\begin{lemma}\label{BarridosNoAfectan}
Let $\gamma_1(t)$ be a Legendrian embedding realising a knot $K$ in standard position (with associated $K$-tangle) and let $\gamma_2^\theta(t)$ be any other loop of Legendrian embeddings. Consider the loop $\mathcal{E}^\theta_2(t)=\gamma\#\gamma_2^\theta(t)$, where the $K$-tangle is the fly. Then,

\begin{itemize}
    \item[i)] the holonomy $g:\mathcal{A}\to\mathcal{A}'$ of any Reidemeister move taking place for the loop $\mathcal{E}^\theta_2(t)$ acts as the identity for every crossing $p$ in the fly (the $K$-tangle); i.e. $g(p)=p'$.
    \item[ii)] the monodromy automorphism of the loop  $\mathcal{E}^\theta_2(t)$ acts on the crossings $p$ of the fly as the identity map; i.e. $g(p)=p'$.
\end{itemize}
\end{lemma}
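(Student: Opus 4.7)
The plan is to reduce the monodromy computation to a case-by-case analysis of the Reidemeister bifurcations appearing along the loop, exploiting the diagrammatic freedom provided by the Elephant-Fly representation from Subsection \ref{ElefanteMosca}. Statement (ii) follows from (i) by composing the holonomies across the finitely many bifurcations of the loop, so it suffices to prove (i).

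Using the Elephant-Fly representation of $\mathcal{E}^\theta_2(t)$, I would first choose a representative of the loop in which the $K$-tangle (the fly) is confined to a box that translates rigidly along the elephant $\gamma_2^\theta(t)$ throughout the loop; Definition \ref{shrinking} allows me to shrink the fly as much as desired without changing the homotopy class of $\mathcal{E}^\theta_2(t)$. Since the loop is compact, only finitely many Reidemeister moves occur, and any new pair $(x,y)$ produced by an $\mathcal{R}\text{-}\text{II}$ move satisfies $h(y)>\varepsilon$ for some uniform $\varepsilon>0$. I shrink the fly once and for all so that every crossing $p$ in the fly has action $h(p)<\varepsilon$; this is the key preparation.

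Because the fly moves rigidly, no Reidemeister bifurcation occurs inside the fly, and every move along the loop involves only elephant crossings. Applying Theorem \ref{theoremChekanov} to an arbitrary fly crossing $p$: for moves of type $\mathcal{R}\text{-}\text{III}_a$, $\mathcal{R}\text{-}\text{III}_b$, and $\mathcal{R}\text{-}\text{II}^{-1}$, the formulas read $g(q)=q'$ for every generator $q$ not explicitly involved in the move, and since $p$ is a fly crossing while the explicitly involved generators are elephant crossings, we get $g(p)=p'$. For a move of type $\mathcal{R}\text{-}\text{II}$, the height ordering \eqref{heightsOrder} places the fly crossing $p$ among the low-action generators $y_j$, because $h(p)<\varepsilon<h(y)$, and Theorem \ref{theoremChekanov} gives $g(y_j)=y_j'$, so again $g(p)=p'$.

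The one step where care is required is the $\mathcal{R}\text{-}\text{II}$ case, whose holonomy on high-action generators is recursively defined and in general nontrivial. The shrinking step in the second paragraph is what neutralises this difficulty: by making the fly smaller than every $\mathcal{R}\text{-}\text{II}$ bifurcation taking place throughout the loop, all fly crossings lie in the low-action regime on which the holonomy acts as the identity relabeling. Concatenating these identity holonomies over the finite sequence of bifurcations of $\mathcal{E}^\theta_2(t)$ then yields statement (ii).
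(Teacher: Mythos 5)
Your argument follows essentially the same route as the paper's proof: reduce (ii) to (i) using the rigidity of the fly box along the loop, then run a case analysis over the four Reidemeister move types, with a preliminary shrinking of the fly placing all of its crossings in the low-action regime of the height ordering (\ref{heightsOrder}), so that the $\mathcal{R}\text{-}\mathrm{II}$ holonomy fixes them as the $y_j$'s in Theorem \ref{theoremChekanov}. This is exactly the paper's strategy and the $\mathcal{R}\text{-}\mathrm{II}$ case is handled identically.

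One imprecision worth fixing: your claim that ``every move along the loop involves only elephant crossings'' is not accurate for the $\mathcal{R}\text{-}\mathrm{III}$ moves that occur while an elephant strand sweeps across the fly box. There the three strands are the moving elephant branch and two strands of the $K$-tangle, so one of the three crossings involved is a genuine self-crossing of the fly; it enters the $\mathcal{R}\text{-}\mathrm{III}_b$ local model as the stationary crossing $z$ (or as $y$), not merely as an uninvolved spectator. The conclusion still holds, but for a reason you did not state: the $\mathcal{R}\text{-}\mathrm{III}_b$ holonomy acts nontrivially only on $x$, which necessarily lies on the moving (elephant) branch, so the fly crossing in the role of $z$ or $y$ is still sent to its relabelling. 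This is precisely the observation the paper makes at this step, and your write-up needs it to close the $\mathcal{R}\text{-}\mathrm{III}_b$ case.
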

\begin{proof}

For checking $i)$, we must check that when a branch of the loop passes over (or below) the $K$-tangle, the $0$-degree crossings of the $K$-tangle are mapped trivially by the holonomy morphism. In particular, we will show that this is the case for each of the elementary moves appearing in the homotopy.

\begin{figure}[h]
    \centering
    \subfloat{
        \label{fig:Step2}
        \includegraphics[width=0.4\textwidth]{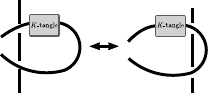}}
    \hspace{0.8cm} % Ajusta el espacio horizontal aquí
    \subfloat{
        \label{fig:Step3}
        \includegraphics[width=0.4\textwidth]{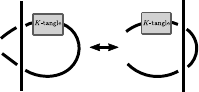}}
    \caption{Branch passing under and over a $K$-tangle, respectively.}\label{branchBlock}
\end{figure}
$\mathcal{R}-\text{III}_a$ moves: this follows automatically since it maps all generators trivially. 

$\mathcal{R}-\text{III}_b$ moves: note that since the moving branch in any $\mathcal{R}-\text{III}_b$ move corresponds to the elephant knot, the crossings of the $K-$tangle can only play the role of the crossings $y$ (if we think the branch $xz$ as the one that is moving) or $z$ (if we think the branch $xy$ as the one that is moving) in diagram (B) of Figure \ref{reidemeisters} and, thus, are mapped trivially as well. 

$\mathcal{R}-\text{II}$ moves:
 first, note that by applying a shrinking isotopy beforehand we can make the $K$-tangle sufficiently small and assume without loss of generality that the action of all the crossings in the $K$-tangle (fly) are arbitrarily small with respect to the two new crossings created. Therefore, by the description of $II$-type move holonomies from Theorem \ref{theoremChekanov} we conclude that the respective holonomies act as the identity on crossings in the $K$-tangle.

%this fact follows from an application of Proposition \ref{holonomiaTipoDos} together with the fact that the differential of every $0-$degree crossing in a $K$-tangle is always zero. 

$\mathcal{R}-\text{II}^{-1}$ moves: holonomies for $\mathcal{R}-\text{II}^{-1}$ are the identity for all points except for two points ($x$ and $y$ crossings in the first diagram of Figure \ref{reidemeisters}). But note that, in our case, these two points do not correspond to generators in the $K$-tangle when a branch passes over (or below) the block. Therefore all the cases are covered and the claim in $i)$ follows.

Note that $ii)$ is an obvious consequence of $i)$ by just noting that there is no non-trivial relabelling of the crossings in the fly at the end of the loop $\mathcal{E}^\theta_2(t)=\gamma\#\gamma_2^\theta(t)$ since the box moves rigidly (in the diagram) all along the loop.
\end{proof}

\begin{remark}\label{EkholmRemark}
	An intuitive/heuristic idea to justify why the claim in the previous Lemma should be true consists of observing that the action (Remark \ref{rmk:Heights}) of any crossing in the fly knot can be taken to be arbitrarily small compared to the action of any crossing in the elephant knot. This justifies why the Reeb chords corresponding to such crossings should not be able to ``reach'' the ones in the elephant knot.  We are thankful to Tobias Ekholm for this idea, which led us to believe that this statement might be true in a preliminary approach to the problem and, as a final outcome, took the form of Lemma \ref{BarridosNoAfectan}.

\end{remark}

\subsection{Kálmán's loops}

T. \kalman constructed a series of loops of Legendrian positive torus knots in \cite{kalman}, which he showed to have non-trivial monodromy invariant. He defined loops based at each $K_{p,q}$ positive torus knot. We will focus our attention on the trefoil $K_{3,2}$ case. So, let us provide a description of \kalmans loop for this case.

Consider first the loop $\LL_{3,2}(t,\theta)$ described in Figure \ref{fig:kalman}, which consists on taking a Legendrian trefoil $K_{3,2}$ knot and taking the $2$ strands of the knot to the 2-times cyclic rotation of them. Since we are considering Legendrian embeddings instead of just knots, we have considered \kalmans loop together with a loop of based points in Figure \ref{fig:kalman}. We have considered this particular parametrization for convenience, as this will be apparent later on. Henceforth, when we write $\LL_{3,2}(t,\theta)$ we will refer to this particular loop of Legendrian embeddings.

\begin{remark}[Notation]\label{loopsMpowers} Given a loop $\gamma(\theta,t)=\gamma^\theta(t)$ and $m\in\Z$ we will denote by $\gamma^m(\theta,t)=\gamma^{\theta,m}(t)$ its $m$-power inside the fundamental group of the space of Legendrian embeddings. %We will denote by $\LL^m_{3,2}(t,\theta)$ the $m$-times concatenation of the loop. Additionally, we write $\LL^{-1}_{3,2}(t,\theta)$ for the inverse loop and $\LL^{-m}_{3,2}(t,\theta)$ for the $m$-times concatenation of the inverse loop. Finally, $\LL^{0}_{3,2}(t,\theta)$ denotes the constant loop. Note that, this way, we have well defined loops $\LL^{i}_{3,2}(t,\theta)$ for any $i\in\mathbb{Z}$.
\end{remark}

We want to emphasize that $\LL_{3,2}(t,\theta)$ is just the loop $\Omega_{3,2}^2$ in \cite[p. 2015 and Sect. 9]{kalman} equipped with a parametrization for each knot in the loop (which varies smoothly with the parameter $\theta$). He showed that a whole family of loops (among which this one is a particular instance) are Legendrian non-trivial despite admitting a parametrization that makes them smoothly trivial. See \cite[4.2]{FMP} for a comprehensive discussion about the set of all possible parametrizations of these loops.

\begin{figure}[h]
	\centering
	\includegraphics[width=0.8\textwidth]{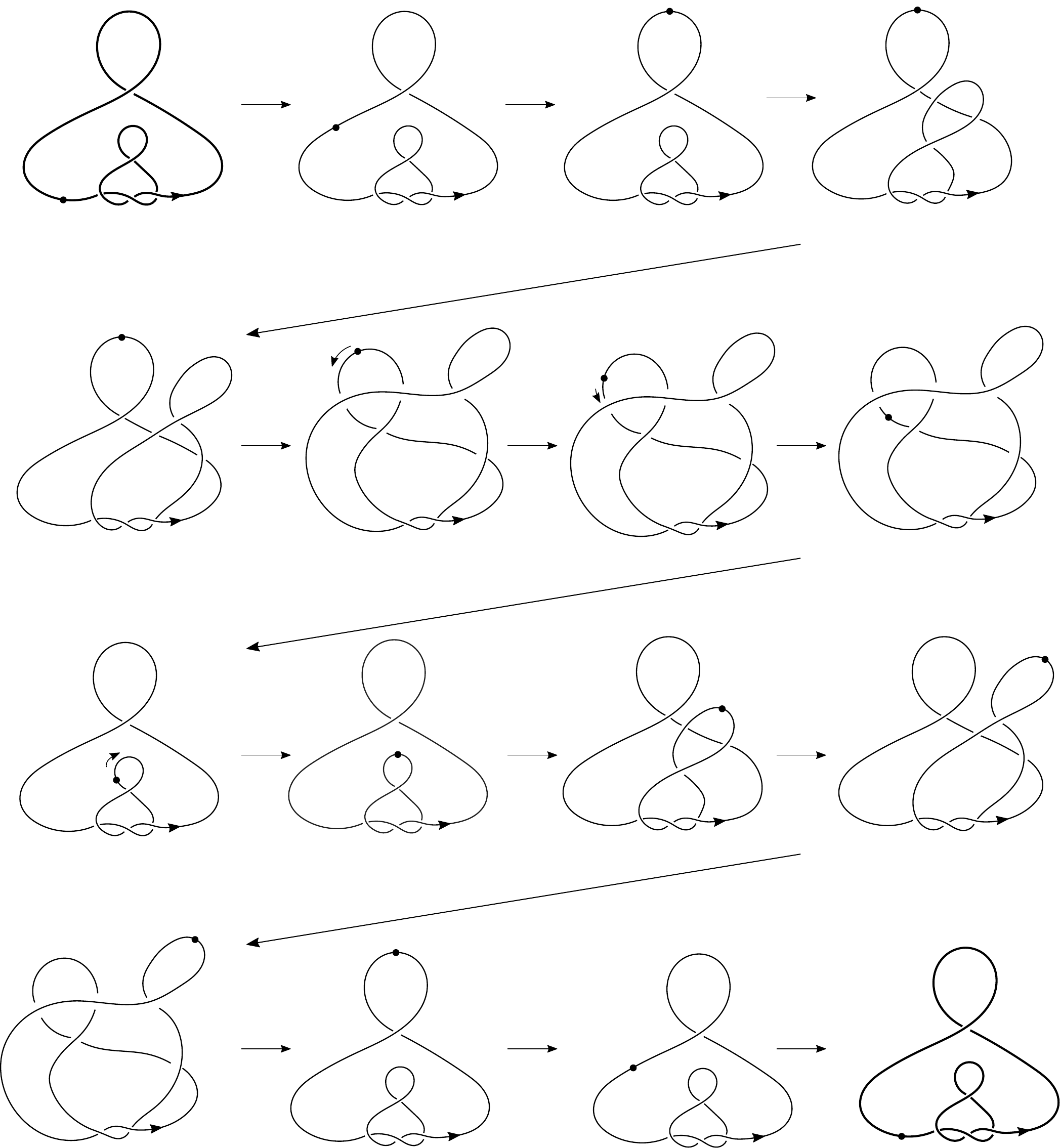}
	\caption{The loop $\LL_{3,2}(t,\theta)$ consisting on taking a Legendrian trefoil $K_{3,2}$ knot and taking the $2$ strands of the knot to the 2-times cyclic rotation of them. The base point of each embedding is depicted as a black dot.}\label{fig:kalman}
\end{figure}

\subsection{Loops with non-trivial monodromy}\label{LoopsExamples}

\begin{theorem}\label{thm:KalmanSumNonTrivialA}
Let $K$ be an even $\partial$-class Legendrian knot and $\mathcal{K}(t,\theta)$ a loop of Legendrians based on $K$. Then, for every $j\in\{1,2,3\}$, $$ \mu (\mathcal{K}\#\LL_{3,2}^j(t,\theta) )\neq \Id.$$%Then the monodromy invariant $\mu$ restricted to the $H_0$ homology-level, $\mu|_{H_0}$, is not the identity for the following cases:
%\begin{itemize}
%\item[(i)] $\mu|_{H_0}\left(\mathcal{K}\#\LL_{3,2}(t,\theta)\right)\neq \operatorname{Id}$.
%\item[(ii)]  $\mu|_{H_0}\left(\mathcal{K}\#\LL_{3,2}^2(t,\theta)\right)\neq \operatorname{Id}$.
%\item[(iii)]  $\mu|_{H_0}\left(\mathcal{K}\#\LL_{3,2}^3(t,\theta)\right)\neq \operatorname{Id}$.
%\end{itemize}
\end{theorem}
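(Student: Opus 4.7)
My approach is to decompose the monodromy of $\mathcal{K}\#\LL_{3,2}^j(t,\theta)$ using the Elephant-Fly construction and then to detect its non-triviality via the parity invariant from Lemma \ref{PropiedadesIdealDiferencial}.

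First, by Theorem \ref{prop:ConnectedSumMap} the loop $\mathcal{K}\#\LL_{3,2}^j$ is homotopic to a concatenation $\mathcal{E}_1\cdot\mathcal{E}_2$, where in $\mathcal{E}_1$ the loop $\mathcal{K}$ plays the role of the elephant with the $K_{3,2}$-tangle as a rigidly-moving fly, and in $\mathcal{E}_2$ the loop $\LL_{3,2}^j$ plays the role of the elephant with the $K$-tangle as the fly. Applying Lemma \ref{BarridosNoAfectan} to each piece: in $\mathcal{E}_1$, the crossings $b_1,b_2,b_3$ of $K_{3,2}$ are fly crossings, hence fixed by the holonomy; in $\mathcal{E}_2$, all crossings of the $K$-tangle are fly crossings, hence fixed. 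Consequently, the total monodromy $\mu$ of $\mathcal{K}\#\LL_{3,2}^j$ acts on $b_1,b_2,b_3$ purely through the holonomy generated in $\mathcal{E}_2$.

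Second, I would analyze the Reidemeister sequence arising in $\mathcal{E}_2$ and verify that, restricted to the generators $b_1,b_2,b_3$, the induced action is the same polynomial automorphism $\sigma_j$ that Kálmán computed for the standalone loop $\LL_{3,2}^j$. Although the connected sum modifies the differential of the outer closure crossing by a factor of $W_{\tilde{K}}$ (as in the proof of Lemma \ref{PropiedadesIdealDiferencial}), the local holonomy formulas from Theorem \ref{theoremChekanov} depend only on differentials of the low-action crossings that actually participate in $\mathcal{R}{-}II$, $\mathcal{R}{-}II^{-1}$ and $\mathcal{R}{-}III_b$ moves. These differentials are unchanged because $\partial b_i=0$ and because the $\tilde{K}$-tangle can be shrunk (Definition \ref{shrinking}) to have arbitrarily small action, so by Lemma \ref{LemaAction} its generators cannot appear in any relevant expansion. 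Following Kálmán's computation for $\Omega_{3,2}^2$ and iterating $j$ times, I would write each $\sigma_j(b_3)$ explicitly as a polynomial in the trefoil generators.

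Third, and this is the heart of the argument, I would apply Lemma \ref{PropiedadesIdealDiferencial} to show that $\sigma_j(b_3)+b_3$ is not a boundary. The lemma states that any degree-$0$ element of $\operatorname{Image}(\partial)$ has $\tau_{b_3}$ even, so it is enough to check that $\tau_{b_3}\bigl(\sigma_j(b_3)+b_3\bigr)$ is odd for each $j\in\{1,2,3\}$. Since $b_3$ is a cycle and represents a non-trivial class by Remark \ref{NonVanishingLCH}, this would yield $\mu([b_3])\neq [b_3]$ in $H_*(K\#K_{3,2})$, and hence $\mu\neq \operatorname{Id}$, as required.

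The main obstacle is the explicit verification in the third step: one must track how Kálmán's cyclic rotations permute the $b_i$-letters with correction terms coming from $\mathcal{R}{-}III_b$ moves, and then confirm that the monomial with the maximal number of $b_3$-letters in $\sigma_j(b_3)$ appears an odd number of times for each of $j=1,2,3$. A secondary technical point is confirming that the presence of the $K$-tangle does not introduce unexpected corrections to this count; this is precisely what the shrinking argument combined with Lemmas \ref{LemaAction} and \ref{BarridosNoAfectan} ensures.
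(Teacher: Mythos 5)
Your overall architecture matches the paper's: decompose $\mathcal{K}\#\LL_{3,2}^j$ into the two Elephant-Fly halves, kill the first half with Lemma \ref{BarridosNoAfectan}, compute the holonomy of the second half on $b_1,b_2,b_3$ explicitly, and detect non-triviality with the parity invariant $\tau_{b_3}$ via Lemma \ref{PropiedadesIdealDiferencial}. However, there is a genuine error in your second step, precisely at the point you defer as "the main obstacle." You claim that the action on $b_1,b_2,b_3$ in $\mathcal{E}_2$ is \emph{the same} automorphism $\sigma_j$ as for Kálmán's standalone loop, justified by saying that the shrunken $K$-tangle has arbitrarily small action and hence "by Lemma \ref{LemaAction} its generators cannot appear in any relevant expansion." This reads Lemma \ref{LemaAction} backwards: the lemma forbids \emph{high}-action crossings from appearing in differentials of \emph{low}-action crossings, but it explicitly permits (and here forces) the low-action fly generators to appear in the differentials of the high-action trefoil crossings. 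Concretely, the $\mathcal{R}\text{-II}^{-1}$ holonomy sends $y\mapsto w'$ where $\partial(x)=y+w$, and $w$ acquires a factor of the word $W_K$ because the $K$-tangle sits on the relevant strand. The paper's computation of the second half is accordingly
\begin{align*}
b_1 &\mapsto W_{K}+b_1b_2W_{K},\qquad b_2 \mapsto 1+b_2b_3,\qquad b_3 \mapsto b_1,
\end{align*}
not Kálmán's $b_1\mapsto 1+b_1b_2$. Small action protects the fly's crossings from being moved (that is Lemma \ref{BarridosNoAfectan}); it does not protect the elephant's holonomies from picking up words in the fly's crossings.

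This matters for the endgame. For $j=3$ the paper finds $\tau_{b_3}\left(\mu(b_3)-b_3\right)=2\ell(W_K)^2+1$, a count that genuinely involves $\ell(W_K)$ and whose oddness uses the even-$\partial$-class hypothesis through the structure of $W_K$; with your (incorrect) $W_K$-free formulas the parity would have to be recomputed from scratch and the role of the hypothesis on $K$ would be invisible. Since you have not actually carried out the iteration for $j=1,2,3$ and your stated reason for why no correction terms appear is wrong, the proposal has a gap exactly where the substantive work of the proof lies: the arrow-by-arrow holonomy computation (fifteen steps in the paper, with the $W_K$ corrections at the $\mathcal{R}\text{-II}^{-1}$ moves) and the resulting parity counts.
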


\begin{proof} 

\begin{figure}[h]
	\centering
	\includegraphics[width=1\textwidth]{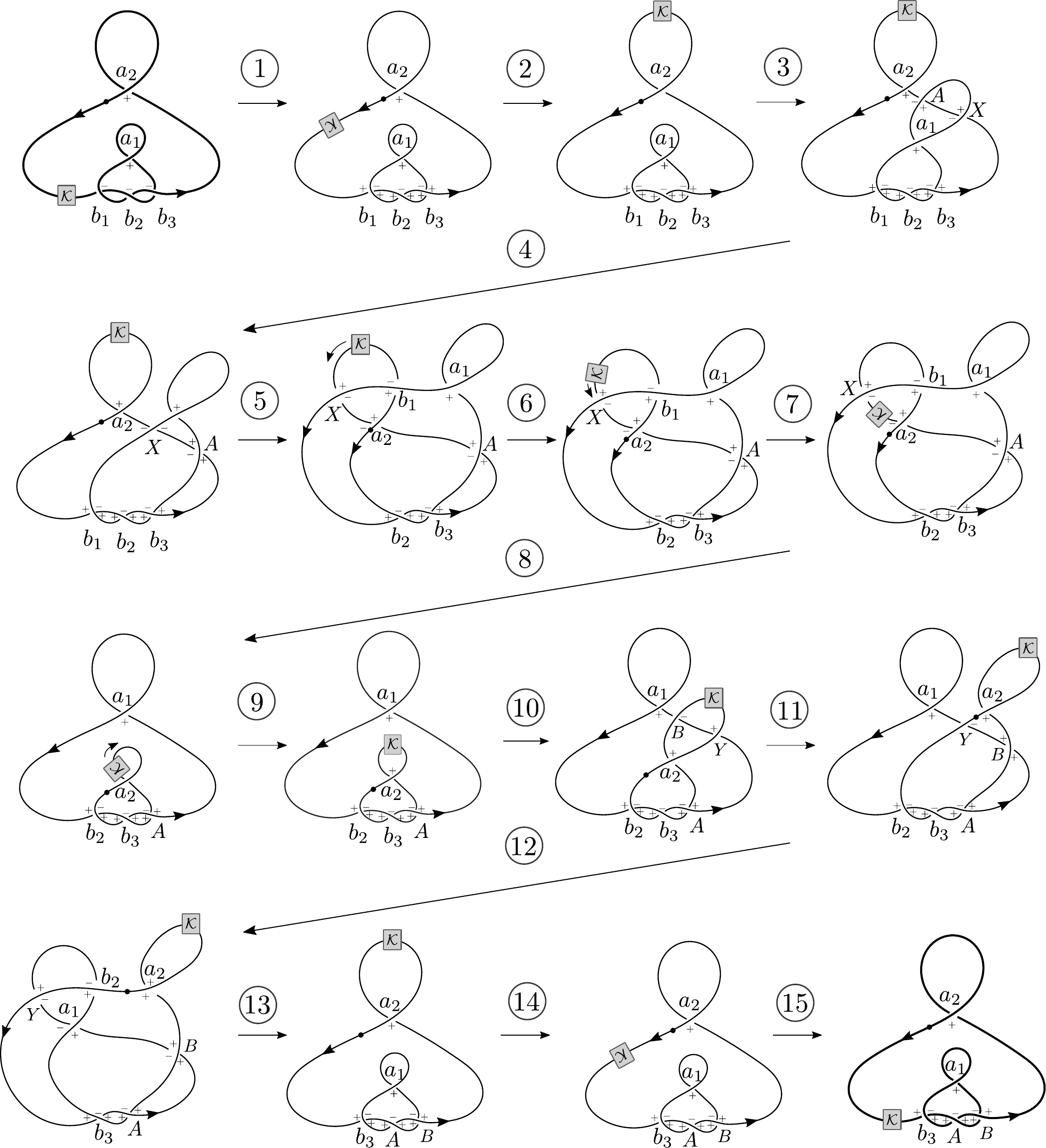}
	\caption{Half the loop of $\mathcal{K}\#\LL_{3,2}(t,\theta)$, where the $K_{3,2}$ trefoil plays the elephant role and the $K$-tangle (depicted as a tiny grey box) plays the role of the fly. Compare it with Figure \ref{fig:kalman}.\label{loopwithbox}}
\end{figure}
We will see that the monodromy restricted to $H_0(K\#K_{3,2})$ is not the Identity.
Precisely, we will focus on the monodromy map restricted to the degree-$0$ generators $b_1, b_2, b_3$ of the trefoil-tangle. A crucial observation is that by Lemma \ref{BarridosNoAfectan} these generators are trivially mapped to themselves after the first part of the loop (when the $(3,2)$-tangle plays the role of the fly). Therefore, it suffices to show that the monodromy acting on this generators is non-trivial only for the second part of the loop where $K$ plays the role of the fly (shown in Figure \ref{loopwithbox}). 

In order to check this, we will examine, one by one, the holonomies taking place at each arrow (we recommmend the reader to simultaneously look at Figure \ref{loopwithbox} while following the explanation below):

\circled{1}: There are no Reidemeister moves taking place.

\circled{2}: The second sequence of moves involves a bunch of II, $\text{II}^{-1}$, III$_a$ and III$_b$-moves but neither of them affect the $0$-degree generators in the $(3,2)-$tangle (i.e. they act as the identity on them). In order to check this, note that $\text{II}^{-1}$ and III-moves do not act on generators remotely. As for II-moves, it follows from Lemma \ref{holonomiaTipoDos}, since $\partial(b'_i)=0$ for $i=1,2,3$ after any of the moves involved. The fact that $\partial(b'_i)=0$ follows by visual inspection but, alternatively, one could argue as follows. Note that we could have applied a shrinking isotopy to the trefoil tangle at the beginning thus allowing for its crossings to have arbitrarily small action. Therefore, their differentials cannot involve crossings in the fly, concluding that $\partial(b'_i)=0$ for $i=1,2,3$.

\circled{3} and \circled{4}: II and $\text{III}_b$-moves in these two arrows do not affect these generators for the same reason. 

\circled{5}: This corresponds to a $\text{III}_b$-move that acts as the identity on $b_1$.

\circled{6}: There are no Reidemeister moves taking place.

\circled{7}: As in \circled{2} it involves several Reidemeister moves, all of which act as the identity on the $0$--degree crossings of the $(3,2)-$tangle. This is obvious for $II^{-1}$ and $III$-moves but we will check it carefully for $II$-moves as well.

It is clear that $\partial(b_1)=0$ before and after any of these moves and, so, Proposition \ref{holonomiaTipoDos} applies for generator $b_1$. On the other hand, note that both $b_2$ and $b_3$ appear in the expression $\partial(a_2)$ before and after any of these moves. Then by Lemma \ref{LemaAction} $a_2$ cannot appear in any of the expressions of the differentials $\partial(b_2), \partial(b_3)$. The only possibility, thus, is that $\partial(b_2)=0$ and $\partial(b_3)=0$ (both before and after any of these moves). By Proposition \ref{holonomiaTipoDos} we conclude that these holonomies act as the identity on $b_2$ and $b_3$ as well.

\circled{8}: It corresponds to a $\text{II}^{-1}$-move whose holonomies restricted to $0$-degree crossings coincide with the holonomies of this move in Kálmán's original loop (see \cite[Section 5]{kalman}) with an extra factor $W_{K}$ in $b_1$. In order to check this, just note that one of the crossing disappearing via this $\text{II}^{-1}$-move will map to zero whereas $b_1$ will be mapped to the differential of the other crossing before the move. Therefore, the $0$-degree crossings are mapped as follows:

\begin{align*}
b_1 &\mapsto \left(1+b_3A\right)W_{K}\\
b_2 &\mapsto b_2\\
b_3 &\mapsto b_3
\end{align*}

\circled{9}: There are no Reidemeister moves taking place.

\circled{10}: Several Reidemeister moves take place but they do not affect $0$-degree generators of the elephant for similar reasons as in arrow \circled{7}. Let us allaborate on this.

$III$ and $II^{-1}$-moves do not act ``remotely'' on crossings not involved in the moves and, thus, all these holonomies act as the identity on degree-$0$ generators of the elephant. We just have to check that this is the case for $II$-moves too.

Note that the expression $\partial(a_2)$ contains crossings $b_2$, $b_3$ and $A$ both before and after any of these moves. Then Lemma \ref{LemaAction} implies that neither of these $0$-degree crossings can contain $a_2$ in their differential. Therefore, the only possibility is that $\partial(b_2)=\partial(b_3)=\partial(A)=0$ (both before but, in particular, after the moves) and, by Proposition \ref{holonomiaTipoDos} we conclude that $II$-moves in this arrow act as the identity map as well for degree-$0$ generators of the elephant.

\circled{11}: It corresponds to a III$_b$-move not affecting $0$-degree crossings of the elephant.

\circled{12}: This corresponds to a III$_b$-move that maps $b_2$ to $b_2$ and also acts as the identity over any other $0-$degree crossings in the elephant.

\circled{13}: It corresponds to a $\text{II}^{-1}$-move that maps $b_2$ to $1+AB$ and acts as the identity for the rest of the $0-$degree crossings. This computation coincides with the one for the $\text{II}^{-1}$-move in \cite[Example 3.6]{kalman}.

\circled{14} and \circled{15}: The moves taking place here do not affect the $0$-degree crossings of the elephant since they all have $0$-differential before and after the moves and, thus, Proposition \ref{holonomiaTipoDos} applies.

\begin{remark}\label{HolonomiesTangleSmall} Recall that every crossing in the word $W_K$ is mapped trivially to itself while $K$ plays the role of the fly by Lemma \ref{BarridosNoAfectan} and, thus, $W_K$ is mapped to itself as well for this half of the loop.
\end{remark}

Note that at the end of this part of the loop $b_3$ takes the original position of $b_1$, $A$ the one of $b_2$ and $B$ the one of $b_3$.  Therefore, the monodromy of $\mathcal{K}\#\LL_{3,2}(t,\theta)$ restricted to the $0$-degree generators of the $(3,2)-$tangle is described as follows:

\begin{align*}
b_1 &\mapsto W_{K}+b_1b_2W_{K}\\ 
b_2 &\mapsto 1+b_2b_3\\ 
b_3 &\mapsto b_1.
\end{align*}

All this implies that the monodromy map acts on generator $b_3$ as $\mu(b_3)=b_1$. Since $\tau_{b_3}\left(\mu(b_3)-b_3\right)=1$, we conclude that $[\mu(b_3)-b_3]\in H_0(K\#K_{3,2})$ is not the zero by Lemma \ref{PropiedadesIdealDiferencial}. This proves the result for $j=1$.

For the loop $\mathcal{K}\#\LL_{3,2}^{2}(t,\theta)$ we iterate the previous relations to obtain that the monodromy acts on $b_3$ as 
\begin{align*}
b_3 &\mapsto W_{K}+b_1b_2W_{K}.
\end{align*}

Note that we are using again that all the holonomies act on $W_K$ as the identity as we iterate this part of the loop (Remark \ref{HolonomiesTangleSmall}).

Lemma \ref{PropiedadesIdealDiferencial} implies that $[\mu(b_3)-b_3]\in H_0(K_{p,q}\#K_{3,2})$ is not zero, which proves the result for $j=2$.

Finally, for the loop $\mathcal{K}\#\LL^3_{3,2}(t,\theta)$ we iterate again the previous expressions combined with Lemma \ref{BarridosNoAfectan} to conclude that the monodromy maps $b_3$ as follows (where we use again that $W_K$ maps to itself for the same reason stated in Remark \ref{HolonomiesTangleSmall}): 

\begin{align*}
b_3 \mapsto & W_{K}+(W_{K}+b_1b_2W_{K})(1+b_2b_3)W_{K}=\\ & W_K+W_K^2+W_K b_2b_3W_K+b_1b_2W_K^2+b_1b_2W_Kb_2b_3W_K.
\end{align*}

Observe that $\tau_{b_3}\left(\mu(b_3)-b_3\right)=2\ell(W_K)^2+1$ is not an even number so Lemma \ref{PropiedadesIdealDiferencial} implies that $[\mu(b_3)-b_3]\in H_0(K_{p,q}\#K_{3,2})$ is not zero. This concludes the argument for $j=3$ and completes the proof.
\end{proof}

In the next Section we  provide infinitely many examples for which Theorem \ref{thm:KalmanSumNonTrivialA}\label{familia1} applies.

\section{Infinite families of loops with non-trivial LCH Monodromy}\label{SecExamples}

In this Section we construct infinitely many new examples of loops of Legendrians with non-trivial monodromy invariant. More specifically, we  provide three essentially different infinite families of loops, each constructed by using the parametric connected-sum operation.
Making use of the machinery developed throughout the article, we will conclude that all the examples in each of the families has non-trivial LCH monodromy invariant.

\subsection{First family of examples via parametric connected-sums.}

Consider a connected sum of $m$ Legendrian positive torus knots of even $\partial$-class $\tilde{K}:=K_1\#\cdots K_k$ (recall that $\tilde{K}$ is then of even $\partial$-class as well by Corollary \ref{ConnectedSumsN2}). %Just to keep a particular example in mind, one could think, for instance, of a connected sum of $m$ positive torus knots of the form $\tilde{K}:=K_{n_1, 2}\#\cdots\# K_{n_k,2}$ with $n_i\not\equiv 2 \mod 3$.

%Take the connected sum of $m$ torus knots, $\tilde{K}=K_{p_1, q_1}\#\cdots\# K_{p_m,q_m}$, so that $K=\tilde{K}\# K_{3,2}$ is of even $\partial$-class (see Figure \ref{concatenationLoops}). Corollary \ref{ConnectedSumsN2} implies that there are infinitely many examples of such $\tilde{K}$. 

For each $i\in\{1,\ldots,m\}$, choose a loop $\mathcal{K}_i(t,\theta)$ based at $K_{i}$. For instance $\mathcal{K}_i$ can be taken to be any (positive or negative) power of \kalmans loop, the constant loop or a Lagrangian rotation (recall Figure \ref{Fig:LoopBoxRotation}). Then, it follows by Theorem \ref{thm:KalmanSumNonTrivialA} that any loop in the following family does not have trivial monodromy invariant:

\begin{equation}\label{FamilyOfLoops}
\bigg\lbrace{\mathcal{K}_1^{\ell_1}\#\cdots\#\mathcal{K}_{k}^{\ell_k}\#\LL_{3,2}^j(t,\theta):\quad l_i\in\Z, \quad  j\in\{1,2,3\}\bigg\rbrace}.
\end{equation}

This geometrically corresponds to the loop defined iteratively on each tangle (see Figure \ref{concatenationLoops}). The loop $\mathcal{K}_1^{\ell_1}$ is performed first on the first component playing the role of the elephant while the rest of the tangles play the role of the fly. Then, that component gets shrunk and the next component gets inflated so that it plays now the elephant role. This process is iteratively carried out until the last component, corresponding to the trefoil, which performs the loop $ \LL_{3,2}^j(t,\theta)$.

\begin{figure}[h]
	\centering
	\includegraphics[width=0.4\textwidth]{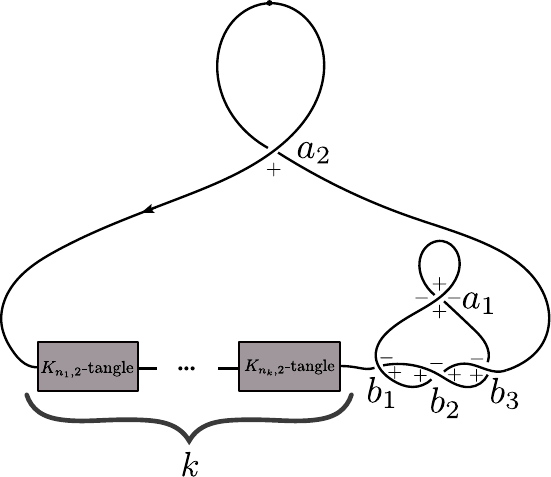}
	\caption{The Legendrian $K=\tilde{K}\#K_{3,2}$ with the labelling of some crossings.}\label{concatenationLoops} 
\end{figure}

%Note that by varying $m$, $\ell_i$ (for each $i=1,\cdots, m$) and the chosen loops $\mathcal{K}_i(t,\theta)$ we get infinitely many combinations of different loops of Legendrians. All of them have non-trivial monodromy invariant, as we showed.

\subsection{Second family of examples via parametric connected-sums.}\label{familia2}

We will now describe an essentially different infinite family of examples. The substantial difference is that we will consider interactions between the different $K_{i}$-tangles given by the Legendrian ``pulling one knot through another'' loops (we refer to the work of R. Budney \cite[Sec. 4]{Budney2}). These loops can be geometrically described as follows. 

Consider an arbitrary connected sum of $m$ knots of even $\partial$-class, $\tilde{K}:=K_1\#\cdots\# K_m$. Once again, note that $K:=\tilde{K}\# K_{3,2}$ is of even $\partial$-class by Proposition \ref{ConnectedSumsEvenDeltaClass}.

Consider two adjacent tangles $K_i$ and $K_{i+1}$ in $\tilde{K}$. We can shrink $K_i$ arbitrarily and pass it all along the component of $K_{i+1}$ until they exchange positions. After that, we can shrink $K_{i+1}$, enlarge $K_i$ and repeat the process with their roles interchanged. See Figure \ref{PullThrough} for an example. It follows from how these loops are defined that they are loops of Legendrian embeddings in standard position.

\begin{figure}[h]
	\centering
	\includegraphics[width=0.8\textwidth]{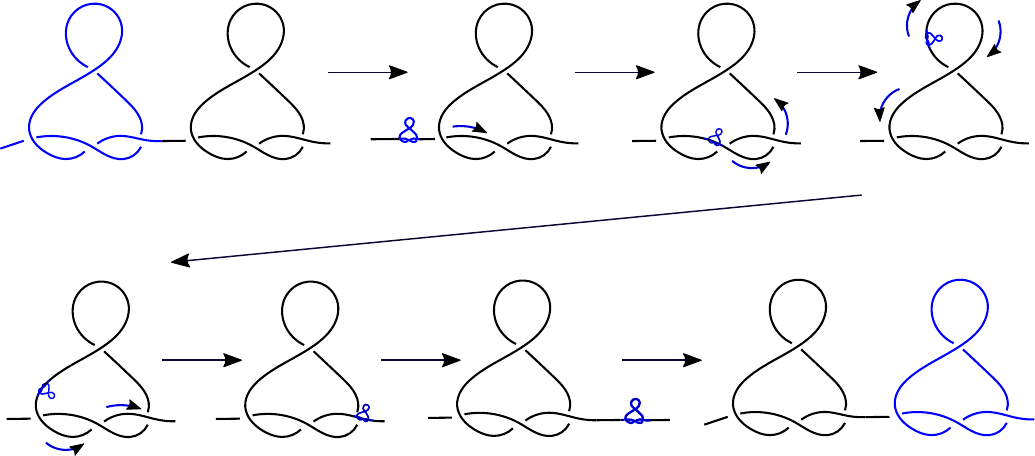}
	\caption{First half of the ``Legendrian pulling one knot through another'' loop for two Legendrian trefoils. In the second half, the roles are exchanged and the black trefoil shrinks and passes through the blue trefoil. }\label{PullThrough}
\end{figure}

Consider arbitrary compositions of all the possible combinations (by taking into considerations all possible adjacent tangles $K_i$ and $K_{i+1}$) of these ``Legendrian pulling one knot through another'' loops. Each set of combinations gives raise to a loop $\mathcal{K}(t,\theta)$  which we can connect-sum to any of the loops $\LL_{3,2}^j$, $j=1,2,3$. All loops $\mathcal{K}\#\LL_{3,2}^j(t,\theta)$ defined in this fashion yield loops with non-trivial monodromy invariant by Theorem \ref{thm:KalmanSumNonTrivialA}.

\begin{remark}\label{NotInvolveTrefoil}
In order to apply Theorem \ref{thm:KalmanSumNonTrivialA}, the ``pulling one knot through another'' loops cannot involve the last $K_{3,2}$-tangle (i.e. the right-most one in the diagram). Indeed, note that pulling some $K_{i}$-tangle through the last trefoil could potentially produce non-trivial holonomies affecting the $0$-degree crossings of the $K_{3,2}$-tangle.
\end{remark}

\begin{remark}
    Note that if both tangles are equal (as in Figure \ref{PullThrough}), the first part of the loop already defines an actual loop of Legendrian knots by itself.
\end{remark}

This family of examples is, in fact, a particular instance of a more general pattern. We can produce even more involved examples by allowing more general interactions between the tangles as follows. 

\textbf{Loops coming from transpositions in $S_m$}. Consider the symmetric group $S_m$ corresponding to permutations of $m$ elements. Identify a particular (non-unique) isotopy that exchanges tangle-$i$ and tangle-$i+1$ with the transposition $(i\quad i+1)\in S_m$. Note that there are a priori more than one possible such isotopies, e.g. the one where tangle-$i$ shrinks and passes through tangle-$i+1$ and the one where the roles are reversed. Then any product of such transpositions in $S_m$ which equals the identity element $Id\in S_m$ (i.e. forms a cycle) gives raise, via such identification, to a loop $\mathcal{K}(t,\theta)$ of Legendrians based at $\tilde{K}$. Again, the monodromy invariant of all loops $\mathcal{K}\#\LL^j_{3,2}(t,\theta)$ described in this fashion will be non trivial for $j=1,2,3$ by Theorem \ref{thm:KalmanSumNonTrivialA}.

\subsection{Third family of examples via parametric connected-sums.}\label{ThirdFamily}

Finally, we can consider a family of loops that combines both parametric connected sums of individual loops in an analogous manner to the first family, together with loops coming from transpositions by the identification above. Precisely, 
consider an arbitrary connected sum of $m$ knots of even $\partial$-class, $\tilde{K}:=K_1\#\cdots\# K_m$. One more time, we have that the knot $K:=\tilde{K}\# K_{3,2}$ is of even $\partial$-class by Proposition \ref{ConnectedSumsEvenDeltaClass}.

For each $i\in\{1,\ldots,m\}$, choose a loop $\mathcal{K}_i(t,\theta)$ based at $K_i$. For example $\mathcal{K}_i$ can be taken to be the constant loop, a Lagrangian rotation (recall Figure \ref{Fig:LoopBoxRotation}) or any (positive or negative) power of \kalmans loop if $K_i$ is a positive torus knot, although any preferred loops could  be considered. Then, we can define a whole family of loops as in Expression (\ref{FamilyOfLoops}), but this time allowing interactions between the tangles (not involving the last trefoil; recall Remark \ref{NotInvolveTrefoil}) by considering loops coming from transpositions as in the previous family of examples. 

These mixed combinations can be performed in any preferred order, giving rise to new infinite families of loops. Again, by Theorem \ref{thm:KalmanSumNonTrivialA}, all these loops have non-trivial monodromy invariant. See Figure \ref{loopsMix} for the depiction of a particular example (among the infinitely many described possibilities) where three positive right-handed trefoils are considered. Nonetheless, see the following Remark regarding this particular depiction.

\begin{remark}
    As pointed out in Remark \ref{RemarkSP}, each time that a loop of embeddings in standard position is considered in the construction, we can represent it as taking place simultaneuosly to the last loop involving the rightmost trefoil. For instance, the loop in Figure \ref{ExampleLoopIntro} involves a ``pulling one knot through another'' loop and a ``Kálmán's loop'' that are taking place simultaneuosly (recall Remark \ref{ExampleLoopIntro}). Recall that such a loop is isotopic to the one where they take place in order one at a time (Remark \ref{RemarkSP}) and so they have the same non-trivial LHC monodromy (Thm. \ref{MonodromyInvariant}). \end{remark}

\end{document}